\newtheorem*{theo*}{Theorem}
\newtheorem{theo}{Theorem}[section]
\newtheorem{prop}[theo]{Proposition}
\newtheorem{coro}[theo]{Corollary}
\newtheorem{lemm}[theo]{Lemma}
\theoremstyle{remark}
\newtheorem{rema}[theo]{Remark}
\newcommand{\mbf}[1]{\textbf{\textsf{#1}}}
\newcommand{\N}{\mathbb{N}}
\newcommand{\Z}{\mathbb{Z}}
\newcommand{\R}{\mathbb{R}}
\renewcommand{\H}{\mathbb{H}}
\newcommand{\hol}{\operatorname{hol}}
\newcommand{\pr}{\mathrm{pr}}
\renewcommand{\P}{\mathrm{P}}
\newcommand{\tr}{\operatorname{tr}}
\newcommand{\pker}{\operatorname{Pker}}
\newcommand{\SL}{\mathrm{SL}}
\newcommand{\PSL}{\mathrm{PSL}}
\newcommand{\Sp}{\mathrm{Sp}}
\newcommand{\slr}{{\SL(2,\R)}}
\newcommand{\slz}{{\SL(2,\Z)}}
\newcommand{\pslr}{{\PSL(2,\R)}}
\newcommand{\pslz}{{\PSL(2,\Z)}}
\DeclareMathOperator{\sech}{sech}
\DeclareMathOperator{\acosh}{acosh}
\newcommand{\Vol}{\mathrm{A}}
\newcommand{\Cap}{\mathrm{C}}
\newcommand{\KZ}{\mathrm{KZ}}
\renewcommand{\Re}{\operatorname{Re}}
\renewcommand{\Im}{\operatorname{Im}}
\newcommand{\A}{\mathcal{A}}
\newcommand{\B}{\mathcal{B}}
\newcommand{\C}{\mathcal{C}}
\newcommand{\D}{\mathcal{D}}
\newcommand{\M}{\mathcal{M}}
\newcommand{\rquo}[2]{
\mathchoice
{\text{\raisebox{1pt}{$#1$}}{\bigm/}\text{\raisebox{-1pt}{$#2$}}} 
{\text{\raisebox{.5pt}{$#1$}}{\bigm/}\text{\raisebox{-.5pt}{$#2$}}} 
{#1/#2} 
{#1/#2} 
}
\newcommand{\lquo}[2]{
\mathchoice
{\text{\raisebox{-1pt}{$#2$}}{\bigm\backslash}\text{\raisebox{1pt}{$#1$}}} 
{\text{\raisebox{-.5pt}{$#2$}}{\bigm\backslash}\text{\raisebox{.5pt}{$#1$}}} 
{#2\backslash #1} 
{#2\backslash #1} 
}
\begin{document}
\title[Quantitative error term on Veech wind-tree models]{Quantitative error term in the counting problem on Veech wind-tree models}
\author{Angel Pardo}
\address{
  Institut Fourier\\
  Universit\'e Grenoble Alpes\\
  CS 40700\\
  38058 Grenoble cedex 09\\
  France
}
\email{angel.pardo@univ-grenoble-alpes.fr}
\subjclass[2010]{37D50, 37C35; 58J50, 37A40, 37D40, 35P15}
\keywords{Billiards, Translation surfaces, Veech surfaces, Periodic orbits, Counting problem, Error term. Spectrum of Laplace operator, Critical exponent}

\begin{abstract}
We study periodic wind-tree models, billiards in the plane endowed with $\mathbb{Z}^2$-periodically located identical connected symmetric right-angled obstacles. We exhibit effective asymptotic formulas for the number of periodic billiard trajectories (up to isotopy and $\mathbb{Z}^2$-translations) on Veech wind-tree billiards, that is, wind-tree billiards whose underlying compact translation surfaces are Veech surfaces. This is the case, for example, when the side-lengths of the obstacles are rational. We show that the error term depends on spectral properties of the Veech group and give explicit estimates in the case when obstacles are squares of side length $1/2$.
\end{abstract}

\maketitle
\section{Introduction}

The classical wind-tree model corresponds to a billiard in the plane endowed with $\Z^2$-periodic obstacles of rectangular shape aligned along the lattice, as in Figure~\ref{figu:WTM1}.

\begin{figure}[ht]
\centering\includegraphics[height=.17\textheight]{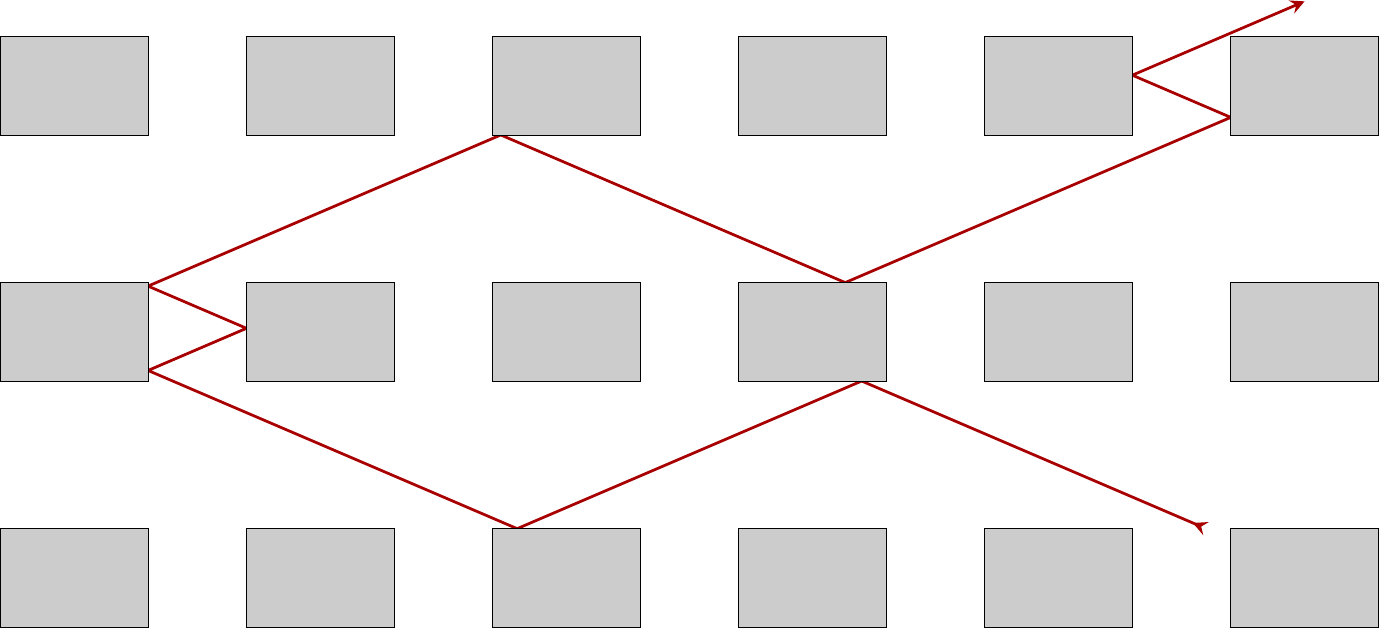}
\caption{Original wind-tree model.}
\label{figu:WTM1}
\end{figure}

The wind-tree model (in a slightly different version) was introduced by P.~Ehrenfest and T.~Ehrenfest~\cite{EE} in 1912. J.~Hardy and J.~Weber~\cite{HW} studied the periodic version. All these studies had physical motivations.

Several advances on the dynamical properties of the billiard flow in the wind-tree model were obtained recently using geometric and dynamical properties on moduli space of (compact) flat surfaces; billiard trajectories can be described by the linear flow on a flat surface.

A.~Avila and P.~Hubert~\cite{AH} showed that for all parameters of the obstacle and for almost all directions, the trajectories are recurrent. There are examples of divergent trajectories constructed by V.~Delecroix~\cite{D}. The non-ergodicity was proved by K.~Fr\c{a}cek and C.~Ulcigrai~\cite{FU}. It was proved by V.~Delecroix, P.~Hubert and S.~Leli\`evre~\cite{DHL} that the diffusion rate is independent either on the concrete values of parameters of the obstacle or on almost any direction and almost any starting point and is equals to $2/3$. A generalization of this last result was shown by V.~Delecroix and A.~Zorich~\cite{DZ} for more complicated obstacles. In the present work we study this last variant, corresponding to a billiard in the plane endowed with $\Z^2$-periodic obstacles of right-angled polygonal shape, aligned along the lattice and horizontally and vertically symmetric. See Figure~\ref{figu:WTMm} for an example.

\begin{figure}[ht]
\centering\includegraphics[height=.19\textheight]{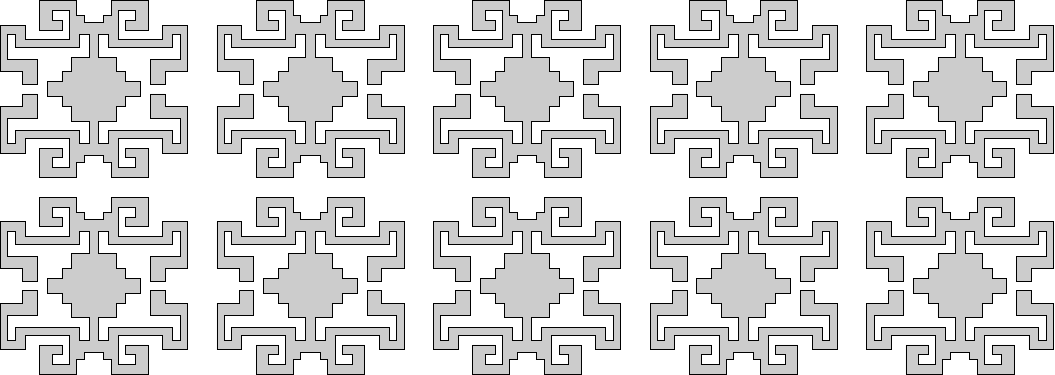}
\caption{Delecroix--Zorich variant.}
\label{figu:WTMm}
\end{figure}

We are concerned with asymptotic formulas for the number of (isotopy classes of) periodic trajectories on the wind-tree model.
This question has been widely studied in the context of (finite area) rational billiards and compact flat surfaces, and it is related to many other questions such as the calculation of the volume of normalized strata~\cite{EMZ} or the sum of Lyapunov exponents of the geodesic Teichm\"uller flow~\cite{EKZ} on strata of flat surfaces (Abelian or quadratic differentials).

H.~Masur~\cite{Ma1,Ma2} proved that for every flat surface (resp. rational billiard) $X$, there exist positive constants $c(X)$ and $C(X)$ such that the number $N(X,L)$ of maximal cylinders of closed geodesics (resp. isotopy classes of periodic trajectories) of length at most $L$ satisfies \[c(X) L^2 \leq N(X,L) \leq C(X) L^2\]
for $L$ large enough.
W.~Veech, in his seminal work~\cite{Ve1}, proved that for Veech surfaces (resp. billiards) there are in fact exact quadratic asymptotics:
\[N(X,L) = c(X) L^2 + o(L^2).\]

Veech surfaces are translation surfaces with a rich group of affine symmetries. They form a dense family on strata, including billiards in regular polygons and square-tiled surfaces.

In this work we study the error term in this kind of asymptotic formulas. In the compact case, the methods used by W.~Veech~\cite{Ve1} give the following result (see \cite[Remark~1.12]{Ve2}).

\begin{theo*}[Veech] 
Let $X$ be a Veech surface. Then, there exists $c(X)>0$ and $\delta(X)\in[1/2,1)$ such that \[N(X,L) = c(X) L^2 + O(L^{2\delta(X)}) + O(L^{4/3})\] as $L\to\infty$.
\end{theo*}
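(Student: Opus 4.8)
The plan is to realize the counting function as a sum over the $\SL(2,\Z)$-orbit (or Veech-group orbit) of the holonomy vectors of maximal cylinders, and then to pass from this lattice-like counting to a spectral problem on the quotient surface $\H/\Gamma$, where $\Gamma$ is the Veech group. Let me think about the standard architecture of such counting arguments.

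The Veech dichotomy tells us the surface decomposes into finitely many cylinders in each periodic direction, and periodic directions correspond to parabolic fixed points (cusps) of $\Gamma$. So counting periodic trajectories of length $\le L$ amounts to counting $\Gamma$-orbit points of the cusps, weighted by cylinder data, inside a region of the hyperbolic plane (a disk of radius $\sim 2\log L$ in hyperbolic distance). This is a lattice-point counting problem in $\H$.

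Now the error term. The classical way to count lattice points $\gamma z_0$ with $d(z_0, \gamma w_0) \le R$ in $\H/\Gamma$ is via the spectral expansion of an automorphic kernel — essentially counting via the resolvent / wave equation / heat kernel, or directly via Selberg's approach. The main term comes from the constant eigenfunction (volume), giving $c(X)L^2$. The error term is governed by the small eigenvalues of the Laplacian on $\H/\Gamma$: each eigenvalue $\lambda_j = s_j(1-s_j)$ with $s_j \in (1/2, 1)$ contributes an error of order $L^{2s_j}$. So $\delta(X)$ should be the largest such $s_j$, i.e., the spectral gap / base eigenvalue. The parameter $\delta(X) \ge 1/2$ corresponds to the fact that the continuous spectrum and the tempered part contribute at the critical line $\Re s = 1/2$, giving the floor $L^{2 \cdot 1/2} = L$ — but the stated floor is $L^{4/3}$, which is larger.

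So here is the plan in steps. First, I would set up the parametrization: use the Veech dichotomy to write $N(X,L)$ as a weighted count of $\Gamma$-orbits of parabolic points inside a hyperbolic ball of radius $R = R(L)$ with $e^R \asymp L^2$. Second, I would introduce the automorphic counting kernel and write down its spectral expansion (discrete eigenvalues plus Eisenstein continuous spectrum). Third, I would isolate the contribution of the constant function as the main term $c(X)L^2$, and bound each remaining discrete eigenvalue's contribution by $L^{2s_j}$, taking $\delta(X) = \max_j s_j \in [1/2,1)$ to be the top nontrivial exponent; the continuous spectrum and the analytic difficulties near the critical line produce the $O(L)$-type contributions absorbed into $O(L^{2\delta(X)})$ since $\delta(X)\ge 1/2$. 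The main obstacle, and the source of the extra $O(L^{4/3})$ term, is that we are counting points in a \emph{sharp} region (a disk, or the preimage of a disk under the cusp-counting correspondence), and the approximation of the sharp cutoff by a smooth one forces a trade-off: smoothing at scale $\eta$ costs $L^{2}\eta$ in the main term and $L^{2}\eta^{-\alpha}$ in the spectral tail, and optimizing $\eta$ yields an exponent of the form $4/3$ — this is precisely the standard lattice-point-count obstruction (the same $2/3$-type exponent in the Gauss circle problem in the hyperbolic setting), independent of the spectral gap. Hence the final error is the maximum of the spectral contribution $L^{2\delta(X)}$ and the geometric sharp-cutoff contribution $L^{4/3}$.

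The delicate technical point to verify carefully is the treatment of the cusps: the Eisenstein series contributions and the non-uniformity of the lattice-point count near the parabolic points must be controlled, since the orbits we count are orbits of cusps rather than of interior points. I would expect this cusp analysis, together with the smoothing optimization, to be where the real work lies; the extraction of the main term is comparatively routine once the spectral kernel is in hand.
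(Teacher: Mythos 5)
Your proposal is correct in substance, but it follows a genuinely different technical route from the one the paper (following Veech) relies on. The paper does not re-prove this statement: the main term is quoted from \cite{Ve1}, where Veech relates the Dirichlet series of the length spectrum to Eisenstein series of the lattice Veech group --- after normalizing the holonomy of a cylinder to the vertical unit vector, the identity $\lvert g^{-1}x\rvert=\Im(gi)^{-1/2}$ (the same identity the paper exploits in the proof of Theorem~\ref{theo:critical exponent}) turns the Dirichlet series $\sum \lvert\hol\gamma_C\rvert^{-2s}$ into the Eisenstein series $\sum_{\Gamma_P\backslash\Gamma}\Im(gi)^{s}$ --- and the error term is quoted from \cite[Remark~1.12]{Ve2}, obtained by applying an effective Tauberian-type theorem of Good \cite[Theorem~4]{Go} to the meromorphic continuation of these series: the pole at $s=1$ gives $c(X)L^{2}$, poles in $(1/2,1)$ (equivalently, small eigenvalues of the Laplacian) give the $L^{2\delta}$ terms, and $O(L^{4/3})$ is what the effective Tauberian step leaves over. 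You instead run the count directly as a hyperbolic lattice-point problem via a smoothed automorphic kernel. Both routes rest on the same spectral data and both work; yours makes the origin of the exponent $4/3$ transparent (the sharp-cutoff/smoothing trade-off, i.e., the hyperbolic circle-problem barrier $e^{2R/3}$ with $e^{R}\asymp L^{2}$), whereas the Tauberian route outsources exactly that optimization to \cite{Go}. The point you rightly flag as delicate is also the real structural difference: what is counted is cosets in $\Gamma_P\backslash\Gamma$ for a parabolic stabilizer $\Gamma_P$, subject to the horoball-type condition $\Im(gi)\geq L^{-2}$, not the full orbit of an interior point in a ball; so the standard point-pair kernel expansion must be replaced by (incomplete) Eisenstein series and their periods. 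This is precisely why Veech's formulation through Eisenstein series is the natural one here, and why the paper's generalization (Theorem~\ref{theo:saturated Veech}) requires nothing more than the collection of cylinders being finitely saturated by a lattice subgroup.
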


Furthermore, the number $\delta(X)$ has a specific interpretation in terms of spectral properties of the Veech group, the group of derivatives of affine symmetries.

\subsection{Asymptotic formulas for wind-tree models}

In~\cite{Pa}, we proved asymptotic formulas for generic wind-tree models with respect to a natural Lebesgue-type measure on the parameters of the wind-tree billiards, that is, the side lengths of the obstacles (cf.~\cite{AEZ,DZ}) and gave the exact value of the quadratic coefficient, which depends only in the number of corners of the obstacle (see~\cite{Pa} for more details on the counting problem on wind-tree models).
Asymptotic formulas were also given in the case of Veech wind-tree billiards, that is, wind-tree billiards such that the underlying compact translation surface is a Veech surface\footnote{We stress that this notion of ``Veech wind-tree billiard'' is not standard.} (see~\S\ref{sect:wtm} for precise definitions). A concrete set of exemples is when all parameters (the side lengths of the obstacles) are rational. In particular, Veech wind-tree billiards form a dense family.

In the present work, we present an effective version of this result, that is, the analogue of Veech's Theorem
, for Veech wind-tree billiards. 

\begin{theo} \label{theo:quantitative}
Let \,$\Pi$ be a Veech wind-tree billiard. Then, there exists $c(\Pi)>0$ and $\delta(\Pi)\in\left(1/2,1\right)$ such that \[ N(\Pi,L) = c(\Pi) L^2 + O(L^{2\delta(\Pi)}) + O(L^{4/3})\]
as $L\to\infty$.
\end{theo}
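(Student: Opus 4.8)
The plan is to reduce the wind-tree counting problem to an effective lattice-point count on the underlying compact Veech surface $X$, and then to run the spectral (automorphic) method that underlies Veech's theorem, the main term and its positivity being already available from~\cite{Pa}. First I would fix the correspondence, implicit in~\cite{Pa}, between isotopy classes of periodic billiard trajectories on $\Pi$ (counted up to $\Z^2$-translation) and maximal cylinders of closed geodesics on $X$, keeping track of the unfolding/combinatorial factors relating a trajectory's length to the holonomy length of the corresponding core curve and of the $\Z^2$-homological data of the cover carried by the Kontsevich--Zorich cocycle. This identifies $N(\Pi,L)$ with a (possibly cocycle-weighted) count of cylinders on $X$ whose holonomy vectors lie in the Euclidean disk of radius $L$.

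Second, I would exploit that $X$ is Veech: its $\slr$-orbit in the ambient stratum is closed and isomorphic to $\Gamma\backslash\slr$, where $\Gamma\subset\slr$ is the Veech group, a non-uniform lattice. The holonomy vectors of cylinders on $X$ form finitely many $\Gamma$-orbits in $\R^2$, so after the reduction above $N(\Pi,L)$ becomes a finite sum of $\Gamma$-orbit counting functions in the disk $B_L$, twisted by the cocycle data when it is present. This is exactly the hyperbolic lattice-point (circle) problem on $\Gamma\backslash\H$. I would then evaluate the count through the spectral expansion of the automorphic counting kernel on $\Gamma\backslash\H$, following Veech~\cite{Ve1,Ve2} (equivalently, via effective equidistribution of the expanding circles $g_t k_\theta$). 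The leading eigenfunction produces the main term $c(\Pi)L^2$ with $c(\Pi)>0$ computed from the Siegel--Veech constant and the cocycle data; each exceptional eigenvalue $\lambda_j=s_j(1-s_j)\in(0,1/4)$ contributes a term of size $L^{2s_j}$ with $s_j\in(1/2,1)$, so that $\delta(\Pi):=\max_j s_j$; and the cuspidal-plus-continuous spectrum above $1/4$ is controlled by the standard Sobolev-norm bound for the spectral remainder, which converts the hyperbolic-time scale $\sim\log L$ into the universal error $O(L^{4/3})$.

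The openness of the range $\delta(\Pi)\in(1/2,1)$ would then be explained spectrally: on the one hand $\Gamma$ is non-uniform, hence carries continuous spectrum, which gives both $\delta(\Pi)<1$ and the $L^{4/3}$ term; on the other hand these wind-tree Veech groups possess an exceptional eigenvalue below $1/4$, forcing $\delta(\Pi)>1/2$. For the square obstacle of side $1/2$ I would identify $X$ with the explicit square-tiled Veech surface whose group is a known finite-index subgroup of $\slz$, and read off the explicit value of $\delta(\Pi)$ from its low-lying Laplace spectrum.

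The main obstacle is the effective spectral estimate of the third step, rather than the bookkeeping of the first two. Concretely, one must bound the contribution of the continuous spectrum, carried by the incomplete Eisenstein series supported near the cusps of $\Gamma\backslash\H$, to the lattice-point count with the correct exponent, since the cusps are precisely where cylinders of small holonomy accumulate and where the naive kernel fails to be $L^2$; this is what produces the $L^{4/3}$ term and is the least robust part of the argument. A secondary difficulty, should the $\Z^2$-data of the cover enter as a genuine twist rather than through a finite quotient, is to make the equidistribution uniform across the associated family of twisted Laplacians and to verify that this does not degrade $\delta(\Pi)$ below the value read from the untwisted spectrum.
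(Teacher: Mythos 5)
There is a genuine gap, and it sits exactly where you relegate the problem to a ``secondary difficulty.'' Your plan reduces $N(\Pi,L)$ to a finite sum of orbit-counting functions for the lattice Veech group $\Gamma=\SL(X)$ and then runs Veech's spectral method. But the cylinders relevant to the wind-tree count are those whose core curves have trivial monodromy, $h(\gamma_C)=v(\gamma_C)=0$, and this condition is \emph{not} invariant under the action of $\SL(X)$ on homology (the Kontsevich--Zorich cocycle moves the classes $h,v$ inside the subbundles $F^{(h)},F^{(v)}$). Consequently this family of cylinders is not a finite union of $\SL(X)$-orbits, and no twist by a finite quotient repairs this: the paper splits the family into \emph{good} cylinders ($\gamma_C\in\mathrm{Ann}(F_X)$, an $\slr$-equivariant condition, to which Veech's method does apply and yields the main term plus the $O(L^{2\delta_{good}})+O(L^{4/3})$ errors) and \emph{bad} cylinders, which are saturated only by the subgroup $\Gamma_{bad}=\{g\in\SL(X):\rho_F(g)f=\pm f\}$. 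This $\Gamma_{bad}$ has infinite index in $\SL(X)$, is non-normal, infinitely generated, of the first kind and of infinite covolume, so there is no spectral expansion, no spectral gap, and no lattice-point theorem available for it; your proposed ``uniform equidistribution across twisted Laplacians'' has no footing here because the relevant quotient $\SL(X)/\ker\rho_F$ is an infinite non-amenable group. The paper instead bounds the bad count by the orbital function of $\Gamma_{bad}$ (Dal'Bo's idea, Theorem~\ref{theo:critical exponent}), getting $N_{bad}(\Pi,L)=O(L^{2\delta(\Gamma_{bad})})$, and then proves $\delta(\Gamma_{bad})<1$ by two applications of Brooks' theorem: $\Gamma_{bad}/\ker\rho_F$ is amenable (it injects into the virtually cyclic stabilizer of $\pm f$ in $\Sp(F_X,\Z)\cong\slz$), so $\delta(\Gamma_{bad})=\delta(\ker\rho_F)$; and $\SL(X)/\ker\rho_F\cong\rho_F(\SL(X))$ is non-amenable (positive Lyapunov exponents force the image to be non-elementary, hence to contain a Schottky group), so $\delta(\ker\rho_F)<\delta(\SL(X))=1$. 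None of this machinery appears in your proposal, and without it the upper bound on $N(\Pi,L)$ cannot be closed.

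A secondary error: your spectral explanation of why $\delta(\Pi)\in(1/2,1)$ is backwards. The strict inequality $\delta(\Pi)>1/2$ does not come from exceptional eigenvalues of the Veech group --- indeed, for the $(1/2,1/2)$ billiard the Veech group is a level-two congruence group which, by Huxley's theorem, satisfies Selberg's $1/4$ conjecture and has \emph{no} exceptional eigenvalues, so the good part contributes only $\delta_{good}=1/2$. The bound $\delta(\Pi)>1/2$ comes from the bad part: $\Gamma_{bad}$ contains parabolics, and any non-elementary Fuchsian group containing a parabolic has critical exponent strictly greater than $1/2$. For the same reason, in the explicit example $\delta(\Pi)$ cannot be ``read off from the low-lying Laplace spectrum'' of the Veech group; it is the critical exponent of $\Gamma_{bad}$, which the paper can only estimate (via Roblin--Tapie energy and combinatorial-spectrum bounds) as $\delta<0.9885$.
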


This result relies, on one hand, in the adaptation of Veech methods to our context, which allows to keep track one well behaved part of periodic trajectories on wind-tree billiards (\emph{good cylinders}, see \S\ref{sect:cpwtm}). On the other hand, there is a family of badly behaved trajectories (\emph{bad cylinders}, see \S\ref{sect:cpwtm}) which we attack using tools from hyperbolic geometry. Thanks to ideas of F.~Dal'Bo~\cite{Da}, we are able to relate the error term for this family with the Poincar\'e critical exponent of an associated subgroup of the Veech group. We prove then that this critical exponents is strictly less than $1$ using results of R.~Brooks~\cite{Br} (see also~\cite{RT}).

\subsection{Explicit estimates} In the simplest case, when $\Pi$ is the wind-tree billiard with square obstacles of side length $1/2$, the Veech group of $\Pi$ can be easily described and most of the involved objects can be explicitly computed, such as the contribution on the error term of the well behaved part of the periodic trajectories.
Using results of T.~Roblin and S.~Tapie~\cite{RT}, we explicitly estimate the contribution of the badly behaved family of periodic trajectories. More precisely, we prove the following.

\begin{theo} \label{theo:example}
Let \,$\Pi$ be the Veech wind-tree billiard with square obstacles of side length $1/2$, and let $\delta=\delta(\Pi)\in\left(1/2,1\right)$ be as in the conclusion of Theorem~\ref{theo:quantitative}. Then, \[\delta < 0.9885.\]
\end{theo}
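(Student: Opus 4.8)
The plan is to prove an explicit upper bound on $\delta(\Pi)$ for the square-obstacle wind-tree billiard by making every abstract object in the proof of Theorem~\ref{theo:quantitative} concrete. Recall that $\delta(\Pi)$ is identified with the Poincaré critical exponent of a particular subgroup $\Gamma$ of the Veech group of $\Pi$, arising from the bad cylinders. So the first step is to write down the Veech group explicitly. For square obstacles of side length $1/2$ the underlying translation surface is a well-known square-tiled (arithmetic) surface, so its Veech group is a finite-index subgroup of $\slz$ which one can present by generators and a fundamental domain; from this description I would identify the relevant subgroup $\Gamma<\pslr$ governing the bad family and compute (or bound) its covolume, the number of cusps, and the widths of the cusps. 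These are the hyperbolic-geometric data that feed into the critical-exponent estimate.

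Second, I would invoke the results of Roblin–Tapie~\cite{RT} in the form used earlier in the paper to bound the critical exponent $\delta(\Gamma)$ from above. The mechanism there (following Brooks~\cite{Br} and Dal'Bo~\cite{Da}) relates the critical exponent of a geometrically finite group to the bottom $\lambda_0$ of the spectrum of the Laplacian on the quotient $\lquo{\H}{\Gamma}$ via the standard formula $\delta=\tfrac12+\sqrt{\tfrac14-\lambda_0}$ when $\delta>\tfrac12$, and it provides effective lower bounds on $\lambda_0$ (equivalently Cheeger-type or isoperimetric lower bounds) in terms of the explicit geometry of the quotient surface. Concretely, once the covolume, cusps and a Cheeger constant of $\lquo{\H}{\Gamma}$ are computed, the Roblin–Tapie estimate yields a numerical lower bound on $\lambda_0$, hence a numerical upper bound on $\delta$. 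The whole step is therefore a matter of plugging the explicit geometric constants from Step~1 into their inequality and simplifying.

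Third, I would carry out the arithmetic carefully enough to reach the stated threshold $\delta<0.9885$. This amounts to: (i) evaluating the geometric constants of $\lquo{\H}{\Gamma}$ exactly or with rigorous rational bounds; (ii) substituting into the effective lower bound for $\lambda_0$; and (iii) monotonically transporting that bound through $\delta=\tfrac12+\sqrt{\tfrac14-\lambda_0}$ to produce the final decimal estimate. Since the target is a strict inequality with a specific constant, I would keep all intermediate bounds rigorous (rational or controlled by explicit transcendental constants) rather than numerical, and only at the very end round to obtain $0.9885$.

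The main obstacle I anticipate is Step~1 together with the sharpness needed in Step~2: the effective spectral gap estimates of Roblin–Tapie typically degrade as the geometry becomes thin (large cusp widths, small Cheeger constant), and the bad-cylinder subgroup $\Gamma$ for this surface may well have thin parts that make the naive lower bound on $\lambda_0$ too weak to cross below the value corresponding to $\delta=0.9885$. Getting a bound this close to $1$ will require identifying $\Gamma$ precisely — not merely as some finite-index subgroup but with its exact cusp structure — and possibly exploiting symmetry of the square-obstacle surface to improve the isoperimetric constant. I expect most of the real work to lie in this geometric bookkeeping, after which the conversion to the numerical bound on $\delta$ is routine.
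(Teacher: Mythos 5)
Your plan has the right outer shell (identify the group, bound $\lambda_0$ from below via Roblin--Tapie, convert through $\delta=\tfrac12+\sqrt{\tfrac14-\lambda_0}$), but it misidentifies the nature of the group and, consequently, the mechanism of the Roblin--Tapie estimate, and this is where the proof would fail. The subgroup governing bad cylinders, $\Gamma_{bad}$, is \emph{not} normal in the Veech group and is an \emph{infinitely generated} Fuchsian group of the first kind, of infinite covolume; in particular it is not geometrically finite, and the quotient $\lquo{\H}{\Gamma_{bad}}$ has no finite list of geometric invariants (covolume, cusp count, cusp widths, Cheeger constant) that you could ``compute in Step 1 and plug in in Step 2.'' The paper's route is structurally different: first one replaces $\Gamma_{bad}$ by the \emph{normal} subgroup $\Gamma=\pker\rho_F$ (the kernel of the Kontsevich--Zorich representation on the relevant $2$-dimensional subbundle), using Brooks' theorem and the amenability of $\rquo{\Gamma_{bad}}{\ker\rho_F}$ (it embeds in the stabilizer of $\pm f$ in $\Sp(F_X,\Z)\cong\slz$, which is virtually cyclic) to get $\delta(\Gamma_{bad})=\delta(\ker\rho_F)$. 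Only then does the Roblin--Tapie inequality apply, and its input is not the geometry of $\lquo{\H}{\Gamma}$ at all: it is (i) the spectral gap $\eta(\Gamma_0)=1/4$ of the ambient lattice $\Gamma_0=\PSL(\Pi)=\langle u^2,{}^{t\!}u^2\rangle$, which requires Huxley's theorem that level-$2$ congruence groups satisfy Selberg's $1/4$ conjecture; (ii) an ``energy'' lower bound $E_\D$ built from transition zones inside a Dirichlet domain of the \emph{lattice} $\Gamma_0$ (the ideal quadrilateral with vertices $\pm1,0,\infty$); and (iii) the bottom $\mu_0(G,S)$ of the \emph{combinatorial} spectrum of the quotient group $G=\rquo{\Gamma_0}{\Gamma}\cong\P\langle u,{}^{t\!}u^3\rangle$ on its Cayley graph, estimated by Nagnibeda-type arguments (cone types / Gabber--Galil). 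Items (ii) and (iii) are the two appendices of the paper and constitute the real work; nothing in your proposal produces them, and no isoperimetric analysis of $\lquo{\H}{\Gamma}$ can substitute for them, precisely because that surface is an infinite amenable-type cover whose spectral bottom is controlled through the deck group, not through its own (infinite-volume, infinitely-cusped) geometry.

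A secondary gap: $\delta(\Pi)$ in Theorem~\ref{theo:quantitative} is the maximum of the bad-cylinder exponent and the exponent coming from the discrete spectrum governing good cylinders, so asserting that $\delta(\Pi)$ ``is'' the critical exponent of the bad-cylinder group needs justification. In this example it is true, but only because Huxley's theorem gives $\delta_{good}(\Pi)=1/2$ (no exceptional eigenvalues in $(0,1/4)$ for the level-$2$ congruence Veech group), which is the same input as (i) above and must be cited, not assumed.
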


\subsection{Strategy of the proof}

W.~Veech~\cite{Ve1} proved that for Veech surfaces there are exact quadratic asymptotics by relating the Dirichlet series of their length spectrum to Eisenstein series associated to the cusps of their (lattice) Veech group. An application of Ikehara's tauberian theorem allows then him to conclude. An effective version of this last tool allows to quantify the error term in terms of spectral properties of the Veech group (see~\cite[Remark~1.12]{Ve2}).

In~\cite{Pa}, we showed that the counting problem on wind-tree models can be reduced to the study of two families of cylinders in the associated translation surface, these are called \emph{good} and \emph{bad} cylinders (see \S\ref{sect:good bad cylinders}, for the precise definition). The notion of good cylinders was first introduced by A.~Avila and P.~Hubert~\cite{AH} in order to give a geometric criterion for recurrence of $\Z^d$-periodic translation surfaces.

Applying Veech's method to the counting problem on Veech wind-tree models, we are able to prove the analogous result in the case of good cylinders, that is, to give the order of the error term in terms of ad-hoc spectral properties of the Veech group of the underlying surface.
This is possible because the collection of good cylinders is $\slr$-equivariant and then, there is a simple description of good cylinders in terms of some particular cusps of the Veech group, which allows to connect the counting problem to the corresponding Eisenstein series as Veech did.

In the case of bad cylinders, this approach does not work anymore since this family is not $\slr$-equivariant and there is no simple description of bad cylinders in terms of (cusps of) the Veech group of the underlying surface. However, bad cylinders can be described in terms of some intricate but well described subgroup $\Gamma_{bad}$ of the Veech group.
Using tools from hyperbolic geometry, thanks to ideas of F.~Dal'Bo~\cite{Da}, we prove that the leading term on the counting of bad cylinders is related to the critical exponent of this subgroup $\Gamma_{bad}$.

Using results of R.~Brooks~\cite{Br}, we prove that this critical exponent is strictly less than $1$. For this, we use the representation of the Veech group given by the restriction of the Kontsevich--Zorich cocycle to a corresponding equivariant subbundle of the real Hodge bundle. The kernel of this representation is a subgroup of $\Gamma_{bad}$. One first application of Brooks results allows us to show that the critical exponents of these two groups coincide. A second application shows that the critical exponent of the kernel is strictly less than that of the Veech group, which equals $1$.

The number $\delta(\Pi)$ in the statement of Theorem~\ref{theo:quantitative}, giving the order of the error term, is completely defined by spectral properties of the involved groups. More precisely, it is the maximum between the critical exponent of the group $\Gamma_{bad}$, associated to bad cylinders, and the second largest pole of the meromorphic continuation of (linear combination of) Eisenstein series, associated to good cylinders. The $4/3$ in the conclusion of Theorem~\ref{theo:quantitative} appears because of technicalities in the effective version of the tauberian theorem for Eisenstein series (\cite[Remark~1.12]{Ve2}).

In the case when $\Pi$ is the wind-tree billiard with square obstacles of side length $1/2$, the Veech group of $\Pi$ is a congruence subgroup of level~$2$. Thanks to a result of M.~Huxley~\cite{Hu}, we know that low level congruence groups satisfies the Selberg's $1/4$ conjecture. To our proposes, this means that the Eisenstein series has no poles in $\left(1/2,1\right)$. 
The critical exponent of $\Gamma_{bad}$ requires much more attention and we are not able to give the exact value. Using results of T.~Roblin and S.~Tapie~\cite{RT}, we estimate the critical exponent of $\Gamma_{bad}$. These estimates are far away from being optimal, but up to our knowledge, this is the only existing tool.

In order to apply this method to estimate the critical exponent of $\Gamma_{bad}$, we have first to give energy estimates on a Dirichlet fundamental domain of the Veech group and to estimate the bottom of the spectrum of the combinatorial Laplace operator associated to the quotient of the Veech group by the above-mentioned kernel.

\subsection{Structure of the paper}
In \S\ref{sect:background} we briefly recall all the background necessary to formulate and prove the results.
In \S\ref{sect:cpvs} we study the counting problem on Veech surfaces associated to collections of cylinders described by a subgroup of the Veech group. We restate Veech's theorem in the case when the subgroup is a lattice and we relate the growth rate to the critical exponent for general subgroups of the Veech group.
In \S\ref{sect:vwtb} we apply this results to the counting problem on Veech wind-tree billiards. Veech's theorem is applied to good cylinders, giving the quadratic asymptotic growth rate with the error term depending in the spectrum of the Veech group. We show that bad cylinders are described by an infinitely generated Fuchsian group of the first kind and prove that its critical exponent is strictly less than one, showing thus the subquadratic asymptotic growth rate of bad cylinders in an effective way.

Finally, in \S\ref{sect:estimates} we study the case of the wind-tree billiard with square obstacles of side length $1/2$. We estimate the critical exponent of the group associated to bad cylinders.
In order to perform this, we give energy estimates in Appendix~\ref{sect:estimates energy} and we estimate the combinatorial specrum of $\pslz$ in Appendix~\ref{sect:estimates combinatorial spectrum}.
Both appendices are self contained and can be read independently of the rest of the paper.

\subsection*{Acknowledgements}
The author is greatly indebted to Pascal Hubert for his guide, constant encouragement, kind explanations and useful discussions. For his invaluable help at every stage of this work. The author is grateful to Vincent Delecroix who, independently to P.~Hubert, take an interest in a quantitative version of the counting problem on wind-tree billiards, their interest being to some extent a first motivation for this work.
The author is thankful to Fran\c{c}oise Dal'Bo for her ideas on how to relate the critical exponent with the asymptotic behavior of the counting function, which are fundamental to this work.
The author is grateful to Samuel Tapie for his kind explanation of his work with T.~Roblin, based on his thesis, on estimates for the critical exponent of a normal subgroup of a lattice group.
The author would like to thank Sebastien Gou\"ezel for the reference to the work of T.~Nagnibeda, where one finds ideas to estimate the bottom of the spectrum of the combinatorial Laplace operator on a Cayley graph.
The author is grateful to Erwan Lanneau for pointing out an error on a computation in a previous version of this work.


\section{Background} \label{sect:background}
\subsection{Rational billiards and translation surfaces}
For an introduction and general references to this subject, we refer the reader to the surveys of Masur--Tabachnikov~\cite{MT}, Zorich~\cite{Zo}, Forni--Matheus~\cite{FM}, Wright~\cite{Wr2}.

\subsubsection{Rational billiards}
Given a polygon whose angles are rational multiples of $\pi$, consider the trajectories of an ideal point mass, that moves at a constant speed without friction in the interior of the polygon and enjoys elastic collisions with the boundary (angles of incidence and reflection are equal). Such an object is called a rational billiard.
There is a classical construction of a translation surface from a rational billiard (see \cite{FK,KZ}).

\subsubsection{Translation surfaces}
Let $g\geq 1$, $\mbf n =\{n_1,\dots,n_k\}$ be a partition of $2g-2$ and $\mathcal{H}(\mbf n)$ denote a stratum of Abelian differentials, that is, holomorphic $1$-forms on Riemann surfaces of genus $g$, with zeros of degrees $n_1,\dots,n_k\in\N$.
There is a one to one correspondence between Abelian differentials and translation surfaces, surfaces which can be obtained by edge-to-edge gluing of polygons in $\R^2$ using translations only. Thus, we refer to elements of $\mathcal{H}(\mbf n)$ as translation surfaces.

A translation surface has a canonical flat metric, the one obtained form $\R^2$, with conical singularities of angle $2\pi(n+1)$ at zeros of degree~$n$ of the Abelian differential.

\begin{rema}\label{rema:marking} A stratum of Abelian differentials $\mathcal{H}(\mbf n)$ has a natural structure of an orbifold. However, using a marking (of horizontal separatrices) we can avoid symmetries which create the orbifold singularities, ensuring a manifold structure on $\mathcal{H}(\mbf n)$. For technical reasons, in this work we consider $\mathcal{H}(\mbf n)$ as a manifold, pointing when the orbifold structure could cause problems.
\end{rema}

\subsubsection{$\slr$-action}
There is a natural action of $\slr$ on strata of translation surfaces,
coming from the linear action of $\slr$ on $\R^2$,
which generalizes the action of $\slr$ on the space $\rquo{\mathrm{GL}(2,\R)}{\slz}$ of flat tori.
Let $g_t = \begin{psmallmatrix} e^t & 0 \\ 0 & e^{-t} \end{psmallmatrix}$; the action of $(g_t)_{t\in\R}$ is called the Teichm\"uller geodesic flow.


\subsubsection{Hodge bundle and the Kontsevich--Zorich cocycle}
The (real) Hodge bundle $H^1$ is the real vector bundle of dimension $2g$ over an affine invariant manifold $\M$ (see~\cite{EMi,EMM} for the precise definition), where the fiber over $X$ is the real cohomology $H^1_X=H^1(X,\R)$. Each fiber $H^1_X$ has a natural lattice $H^1_X(\Z)=H^1(X,\Z)$ which allows identification of nearby fibers and definition of the Gauss--Manin (flat) connection.
The monodromy of the Gauss--Manin connection restricted to $\slr$-orbits provides a cocycle called the Kontsevich--Zorich cocycle, which we denote by $\KZ(A,X)$, for $A\in\slr$ and $X\in\M$.
The Kontsevich--Zorich cocycle is a symplectic cocycle preserving the symplectic intersection form $\langle f_1,f_2\rangle=\int_S f_1\wedge f_2$ on $H^1(X,\R)$.

\subsubsection{Lyapunov exponents}
Given any affine invariant manifold $\M$, we know from Oseledets theorem that there are real numbers $\lambda_1(\M) \geq \dots \geq \lambda_{2g}(\M)$, the Lyapunov exponents of the Kontsevich--Zorich cocycle over the Teichm\"uller flow on $\M$
and a measurable $g_t$-equivariant filtration of the Hodge bundle $H^1(X,\R) = V_1(X)\supset\dots\supset V_{2g}(X)=\{0\}$ at $\nu_\M$-almost every $X\in\M$ such that \[\lim_{t\to\infty} \frac{1}{t} \log \| \KZ(g_t,X) f\|_{g_t\omega}=\lambda_i\] for every $f\in V_i\setminus V_{i+1}$.

The fact that the Kontsevich--Zorich cocycle is symplectic implies that the Lyapunov spectrum is symmetric, $\lambda_j = -\lambda_{2g-j}$, $j=0,\dots,g$.

\subsubsection{Equivariant subbundles of the Hodge bundle} \label{sect:subbundles}
Let $\M$ be an affine invariant submanifold and $F$ a subbundle of the Hodge bundle over $\M$. We say that $F$ is equivariant if it is invariant under the Kontsevich--Zorich cocycle.
Since $\M$ is $\slr$-invariant, by the definition of the Kontsevich--Zorich cocycle, a flat (locally constant) subbundle is always equivariant.

We say that $F$ admit an almost invariant splitting, if there exists $n \geq1$ and for $\nu_\M$-almost every $X\in\M$ there exist proper subspaces $W_1(X),\dots,W_n(X)\subset F_X$ such that $W_i(X)\cap W_j(X) = \{0\}$ for $1\leq i < j \leq n$, such that, for every $i\in\{1,\dots,n\}$ and almost every $A\in\slr$,
$\KZ(A,X)W_i(X) = W_j(AX)$ for some $j\in\{1,\dots,n\}$, and such that the map $X\mapsto\{W_1(X),\dots, W_n(X)\}$ is $\nu_\M$-measurable.
We say that $F$ is strongly irreducible if is does not admit an almost invariant splitting.

\begin{rema}\label{rema:vector bundle} Without avoiding symmetries which causes orbifold points on $\mathcal{H}(\mbf n)$ (see Remark~\ref{rema:marking}), the Hodge bundle would not be an actual vector bundle (we would have to consider the cohomology group up to symmetries) and the Kontsevich--Zorich cocycle would not be an actual linear cocycle. In this work we consider some invariant splittings of the Hodge bundle which would not be invariant by the whole action of $\slr$ if we do not consider the marking.
\end{rema}

Previous discussion about Lyapunov exponents applies in this context as well and we have that, as before, for almost every $X\in\M$, there is a measurable $g_t$-equivariant filtration $F_{X} = U_1(X)\supset\dots\supset U_r(X)=\{0\}$, where $r=\operatorname{rank}F=\dim F_X$ and, for every $f\in U_i\setminus U_{i+1}$, \[\lim_{t\to\infty} \frac{1}{t} \log \| \KZ(g_t,X) f\|_{g_tr_\theta\omega}=\lambda_i(\M,F).\]

We denote by $F_X(\Z)= F_X\cap H^1_X(\Z)$ the set of integer cocycles in $F_X$.
We say that $F$ is defined over $\Z$ if it is generated by integer cocycles, that is, if $F_X=\left<F_X(\Z)\right>_\R$.
When $F$ is defined over $\Z$, $F_X(\Z)$ is a lattice in $F_X$.

\subsubsection{Veech group and Veech surfaces}
We denote the stabilizer of a translation surface $X$ under the action of $\slr$ by $\SL(X)$.
The group $\SL(X)$ is also the group of derivatives of affine orientation-preserving diffeomorphisms of $X$.

Recall that $\slr$ does not act faithfully on the upper half-plane $\H$; it is the projective group $\pslr$ that does so. If $G$ is a subgroup of $\slr$, we denote by $\P G$ its image in $\pslr$. In a slight abuse of notation we sometimes shall omit $\P$ whenever it is clear from the context that we see $G$ as a subgroup of $\slr$ or $\pslr$.
We define the \emph{Veech Group} of $X$ to be $\PSL(X)$, that is, the image of $\SL(X)$ in $\pslr$.

A translation surface $X$ is called \emph{Veech surface} if its Veech group $\PSL(X)$ is a lattice, that is, if $\rquo{\H}{\PSL(X)}$ has finite volume. Veech surfaces correspond to closed $\slr$-orbits. Such a closed orbits is called a Teichm\"uller curve.
In this work we are devoted to Veech surfaces.
For an introduction and general references to Veech surfaces, we refer the reader to the survey of Hubert--Shcmidt~\cite{HS}.

\begin{rema}\label{rema:elliptic elements} Since we are considering markings on translation surfaces in order to avoid orbifold points on strata (see Remark~\ref{rema:marking}), elliptic elements (that is, finite order elements) are never in $\PSL(X)$.
\end{rema}

\subsubsection{Veech group representation} \label{sect:representation}
When $A\in\SL(X)$, the Kontsevich--Zorich cocycle defines a symplectic map $\KZ(A,X):H^1_X\to H^1_X$ which preserves $H^1_X(\Z)$. This defines thus a representation $\rho_{H^1}$ of $\SL(X)$ on the symplectic group $\Sp(H^1_X,\Z)$,
\[
\begin{matrix}
\rho_{H^1}: & \SL(X) & \to & \Sp(H^1_X,\Z), \\
& A & \mapsto & \KZ(A,X).
\end{matrix}
\]
If $F$ is an equivariant subbundle, then the restriction of the Kontsevich--Zorich cocycle to $F$ gives another representation which, in general, is not faithful and we denote it by $\rho_{F}: \SL(X) \to \SL(F_X)$. Note that, in general, this representation is neither symplectic nor defined over $\Z$. However, if the subbundle is symplectic or defined over $\Z$, so is the representation.

Since, by our convention, finite order elements are not allowed in $\SL(X)$, in particular $-id\notin \SL(X)$ and hence every representation $\rho_F$ descends to a representation of $\PSL(X)$ on $\PSL(F_X)$.

\subsection{Counting problem}
We are interested in the counting of closed geodesics of bounded length on translation surfaces.

\subsubsection{Cylinders}
Together with every closed regular geodesic in a translation surface $X$ we have a bunch of parallel closed regular geodesics.
A cylinder on a translation surface is a maximal open annulus filled by isotopic simple closed regular geodesics.
A cylinder $C$ is isometric to the product of an open interval and a circle, and its core curve $\gamma_C$ is the geodesic projecting to the middle of the interval.

\subsubsection{Holonomy}
Integrating the corresponding Abelian differential along the core curve of a cylinder or, more generally, any homology class $\gamma\in H_1(X,\Z)$, we get a complex number. Considered as a planar vector, it represents the affine holonomy along $\gamma$ and we denote this holonomy vector by $\hol(\gamma)$. In particular, the euclidean length of a cylinder corresponds to the modulus of its holonomy vector.

A relevant equivariant subbundle is given by $\ker \hol$ which in turn is the symplectic complement of the so called tautological (sub)bundle.

\subsubsection{Counting problem}
Consider the collection of all cylinders on a translation surface $X$ and consider its image $V(X)\subset\R^2$ under the holonomy map, $V(X)=\left\{ \hol \gamma_C : C \text{ is a cylinder in } X \right\}$. This is a discrete set of $\R^2$. We are concerned with the asymptotic behavior of the number $N(X,L)=\# V(X)\cap B(L)$ of cylinders in $X$ of length at most $L$, when $L\to\infty$.

More generally, we can consider any collection of cylinders $\C\subset\A$, and study the asymptotic behavior of the number of cylinders in $\C$ of length at most $L$, $N_\C(X,L)=\# V_\C(X)\cap B(L)$, as $L\to\infty$, where $V_\C(X)=\left\{\hol\gamma_C:C\in\C\right\}$.

\subsection{Wind-tree model} \label{sect:wtm}
The wind-tree model corresponds to a billiard $\Pi$ in the plane endowed with $\Z^2$-periodic horizontally and vertically symmetric right-angled obstacles, where the sides of the obstacles are aligned along the lattice as in Figure~\ref{figu:WTM1} and Figure~\ref{figu:WTMm}.

Recall that in the classical case of a billiard in a rectangle we can glue a flat torus out of four copies of the billiard table and unfold billiard trajectories to flat geodesics of the same length on the resulting flat torus.
In the case of the wind-tree model we also start from gluing a translation surface out of four copies of the infinite billiard table $\Pi$. The resulting surface $X_\infty=X_\infty(\Pi)$
is $\Z^2$-periodic with respect to translations by vectors of the original lattice. Passing to the $\Z^2$-quotient we get a compact translation surface $X=X(\Pi)$. For the case of the original wind-tree billiard, with rectangular obstacles, the resulting translation surface is represented at Figure~\ref{figu:compact-surface}
(see \cite[\S~3]{DHL} for more details).

\begin{figure}[ht]
\centering\includegraphics[height=.2\textheight]{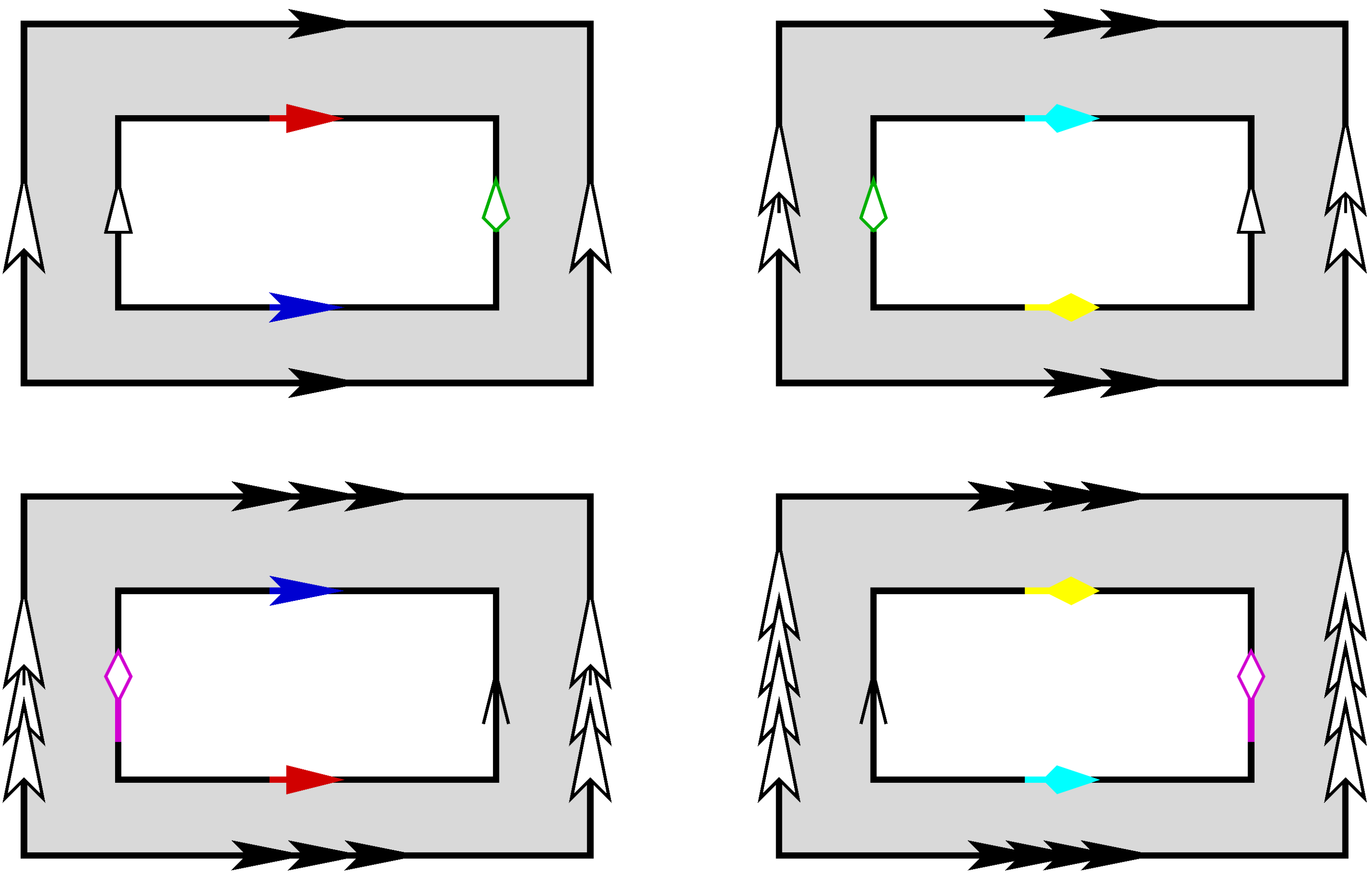}
\caption{The translation surface $X$ obtained as quotient over $\Z^2$ of an unfolded wind-tree billiard table (\cite[Figure~5]{DZ}).}
\label{figu:compact-surface}
\end{figure}

Similarly, when the obstacle has $4m$ corners with the angle $\pi/2$ (and therefore, $4m-4$ with angle $3\pi/2$), the same construction gives a translation surface 
consisting in four flat tori with holes ---four copies of a $\Z^2$-fundamental domain of $\Pi$, the holes corresponding to the obstacles--- with corresponding identifications, as in the classical setting ($m=1$, see Figure~\ref{figu:compact-surface}).

\subsubsection{Description of the $\Z^2$-covering and relevant subbundles} \label{sect:wind-tree subbundles}
There are two cohomology classes $h,v\in H^1(X,\Z)$ defining the $\Z^2$-covering $X_\infty$ of $X$.
Let $\M$ be the $\slr$-orbit closure of $X$. Then, thanks to the symmetries of $X$, there are two equivariant subbundles $F^{(h)}$ and $F^{(v)}$ of $H^1$ defined over $\M$, such that $h\in F^{(h)}$ and $v\in F^{(v)}$ (see~\cite{Pa} for more details). Furthermore, we have the following (see~\cite[Corollary~5]{Pa}).

\begin{theo} \label{theo:positive Lyapunov exponent}
Let $\Pi$ be a wind-tree billiard, $X=X(\Pi)$. Then, the subbundles $F^{(h)}$ and $F^{(v)}$ defined over the $\slr$-orbit closure of $X$ are $2$-dimensional flat subbundles defined over $\Z$ and have non-zero Lyapunov exponents.
\end{theo}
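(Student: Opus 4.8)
The plan is to extract the subbundles $F^{(h)}$ and $F^{(v)}$ from the finite symmetry group of $X$ and then separate the three assertions: being flat and defined over $\Z$ will be formal consequences of the symmetry, the dimension count a finite computation, while the positivity of the Lyapunov exponents is a genuine dynamical input and is the main obstacle.

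First I would make the symmetries explicit. The horizontal and vertical reflection symmetries of the obstacle, combined with the four-fold unfolding used to build $X$ out of four copies of a $\Z^2$-fundamental domain of $\Pi$, produce an action of $G=(\Z/2\Z)^2$ on $X$ by translation automorphisms (the standard symmetries of \cite{DHL,DZ}). Since $G$ acts by automorphisms of the translation structure it commutes with the $\slr$-action over the orbit closure $\M$ (using the marking of Remark~\ref{rema:marking}), so its action on $H^1$ is realized by the Kontsevich--Zorich cocycle and is locally constant. Decomposing $H^1$ into the four $G$-isotypic components therefore yields four \emph{flat} equivariant subbundles; since $G$ preserves the integral lattice $H^1_X(\Z)$ and the characters $\chi$ of $G$ take values $\pm1$, each component is spanned by integral classes of the form $\sum_{g\in G}\chi(g)\,g^*f$ with $f$ integral, and is thus \emph{defined over $\Z$} in the sense of \S\ref{sect:subbundles}. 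I would then identify $F^{(h)}$ (resp.\ $F^{(v)}$) as the isotypic component containing $h$ (resp.\ $v$), checking that $h$ and $v$ are joint eigenvectors of the two involutions lying in two distinct nontrivial characters.

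The 2-dimensionality is then a finite computation in the homology of $X$ equipped with its $G$-action; I expect it to be uniform in the number $4m$ of corners, because the characters carrying $h$ and $v$ only see the underlying torus structure of the four copies and not the extra handles created by additional corners. A useful consequence of integrality is that the restricted cocycle $\KZ(A,X)|_{F^{(h)}}$ has determinant $\pm1$ (it preserves the lattice $F^{(h)}_X(\Z)$), so the two Lyapunov exponents of $F^{(h)}$ sum to zero and are of the form $\pm\lambda$ with $\lambda\geq0$; the assertion ``non-zero Lyapunov exponents'' is exactly $\lambda>0$, and likewise for $F^{(v)}$.

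The hard part is this positivity. By the Oseledets--Furstenberg dichotomy for $\slr$-valued cocycles, $\lambda=0$ would force the monodromy of $F^{(h)}$ to be \emph{elementary}: either relatively compact (unitary) or preserving a finite family of measurable line fields. I would rule both out by producing two transverse unipotent elements in the monodromy. When $X$ is a Veech surface this is cleanest: in a completely periodic direction the corresponding Dehn twist is a parabolic element of $\SL(X)$, and its image under $\rho_{F^{(h)}}$ is unipotent; choosing two periodic directions whose unipotent actions on $F^{(h)}$ are non-trivial with distinct fixed lines, the generated group is non-elementary, hence Zariski-dense in $\slr$, which precludes both a relatively compact image and an invariant measurable line field, giving $\lambda>0$. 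For a general wind-tree billiard one works over the whole orbit closure $\M$ and must show that the algebraic hull of the restricted cocycle is all of $\slr$, realizing the two transverse parabolics through affine automorphisms available in $\M$; this passage to the non-Veech case is where the main difficulty lies. The delicate verifications are that the twist in at least one direction acts \emph{non-trivially} on $F^{(h)}$ and that two directions can be chosen with genuinely transverse unipotent actions, and this is exactly where the specific wind-tree geometry enters. As a robust cross-check, the Eskin--Kontsevich--Zorich formula \cite{EKZ} expresses $\lambda$ through a non-negative degree of the $(1,0)$-part of $F^{(h)}$ and recovers the exact values underlying the diffusion rate $2/3$ of \cite{DHL,DZ}, confirming $\lambda>0$.
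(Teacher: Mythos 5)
The first thing to compare against: the paper does not prove Theorem~\ref{theo:positive Lyapunov exponent} at all. It is quoted from \cite[Corollary~5]{Pa}, and there, as in \cite{DHL} and \cite{DZ} on which it rests, the positivity of the exponents is obtained by \emph{explicitly computing} them, via the Eskin--Kontsevich--Zorich formula \cite{EKZ} (together with \cite{AEZ}) applied to suitable quotient surfaces: the exponent equals $2/3$ for rectangular obstacles and explicit positive values for the $4m$-corner obstacles. In other words, what you relegate to a closing ``robust cross-check'' is the actual proof in the literature, while your main argument has a genuine gap exactly at the crucial point. The dichotomy you invoke is not available off the shelf: Furstenberg's theorem concerns i.i.d.\ matrix products, and for the deterministic Kontsevich--Zorich cocycle the statement ``zero exponents force an invariant norm (hence, by integrality and discreteness, finite monodromy)'' is precisely the theorem of Avila--Eskin--M\"oller \cite{AEM} (with \cite[Theorem~A.9]{EMi}), which the paper cites right after the statement; this step can be repaired, but only by that machinery. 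More seriously, the input you then need --- one, let alone two transverse, parabolic elements acting \emph{non-trivially} through $\rho_{F^{(h)}}$ --- is never verified, and it is exactly the delicate point: by Theorem~\ref{theo:ker non-elementary}, $\pker\rho_F$ is a Fuchsian group of the first kind, so a great many parabolics die in this representation; the paper checks survival by direct computation ($\rho_{E^{+-}}(u^2)=u^3$) only for $\Pi(1/2,1/2)$ in \S\ref{sect:example}. Finally, you explicitly set aside the non-Veech case (``this passage\dots is where the main difficulty lies''), but the theorem is stated for \emph{every} wind-tree billiard and the generic one is not Veech, so as written the proposal proves at most a special case.

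There is also a gap in the construction itself. Identifying $F^{(h)}$ with the full $G$-isotypic component of $H^1$ containing $h$, with the claim that its dimension is $2$ ``uniformly in $4m$'', only works for $m=1$. For the Delecroix--Zorich obstacle with $4m$ corners, $X$ has $4m$ zeros of order $2$, hence genus $4m+1$; the three non-trivial elements of $G$ act freely, and a Riemann--Hurwitz count gives $\dim E^{++}=2m+2$ and $\dim E^{+-}=\dim E^{-+}=\dim E^{--}=2m$. So for $m>1$ the isotypic component containing $h$ has dimension $2m>2$: the extra handles do land in these components, your candidate bundle is too big, and the theorem's $F^{(h)}$ is a \emph{proper} $2$-dimensional flat subbundle of $E^{+-}$. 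Its equivariance is then no longer a formal consequence of symmetry --- invariance of an isotypic component is automatic, invariance of a proper subspace of one is not --- and this is where the finer quotient analysis of \cite{DZ}, reused in \cite{Pa}, is genuinely needed. The parts of your proposal that do go through essentially as written are the flatness and integrality of the isotypic components themselves (modulo the marking issue of Remarks~\ref{rema:marking} and~\ref{rema:vector bundle}, which you correctly flag) and the reduction of the claim to $\lambda>0$ via $\det=\pm1$.
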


As consequence, these subbundles are strongly irreducible and symplectic. Indeed, by~\cite[Theorem~1.4]{AEM} and \cite[Theorem~A.9]{EMi}, any measurable equivariant subbundle with at least one non-zero Lyapunov exponent is symplectic and, in particular, even dimensional. Thus, a two-dimensional subbundle is automatically strongly irreducible provided it has non-zero Lyapunov exponents. Furthermore, these subbundles are subbundles of $\ker\hol$.

\subsubsection{The $(1/2,1/2)$ wind-tree model} \label{sect:example} We give a little more details in the case of the wind-tree billiard with square obstacles of side length $1/2$, $\Pi = \Pi(1/2, 1/2)$.

\begin{figure}[ht]
\centering\includegraphics[height=.3\textheight]{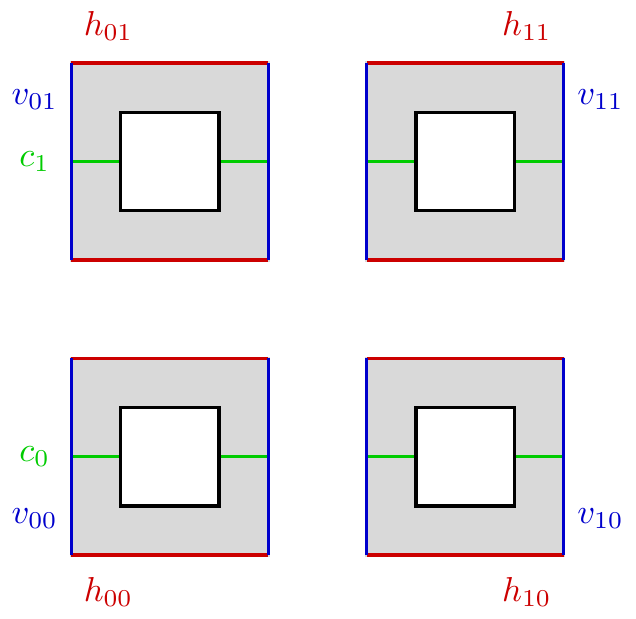}
\caption{The surface $X=X(\Pi(1/2,1/2))$ and the cycles $h_{ij}$, $v_{ij}$ and $c_j$, $i,j \in \{0,1\}$ (cf.~\cite[Figure~4]{DHL}).}
\label{figu:homology basis}
\end{figure}

The surface $X=X(\Pi)$ is a covering of a genus~$2$ surface $L$ which is a so called L-shaped surface that belongs to the stratum $\mathcal{H}(2)$ (see for example \cite{DHL}). In particular, $\SL(X)$ is a finite index subgroup of $\SL(L)$.
In this particular case, $L$ is a square-tiled surface, tiled by $3$ squares, as in Figure~\ref{figu:L-shaped}.

It is elementary to see that the stabilizer of $L$ is generated by $r=\begin{psmallmatrix} 0 & 1 \\ -1 & 0 \end{psmallmatrix}$ and $u^2=\begin{psmallmatrix} 1 & 2 \\ 0 & 1 \end{psmallmatrix}$ (see for example \cite[\S9.5]{Zo}).
However, with our convention on markings, elliptic elements are forbidden (see Remark~\ref{rema:elliptic elements}) and thus, $\SL(L)=\langle u^2,{}^{t\!}u^2 \rangle$, where ${}^{t\!}g$ is the transpose of $g$.
Moreover, it is not difficult to verify that $\SL(X)=\SL(L)$. In particular, $\PSL(X)$ is a level two congruence group.

\begin{figure}[ht]
\centering
\raisebox{-.5\height}{\includegraphics[width=.9\textwidth,height=.25\textheight,keepaspectratio]{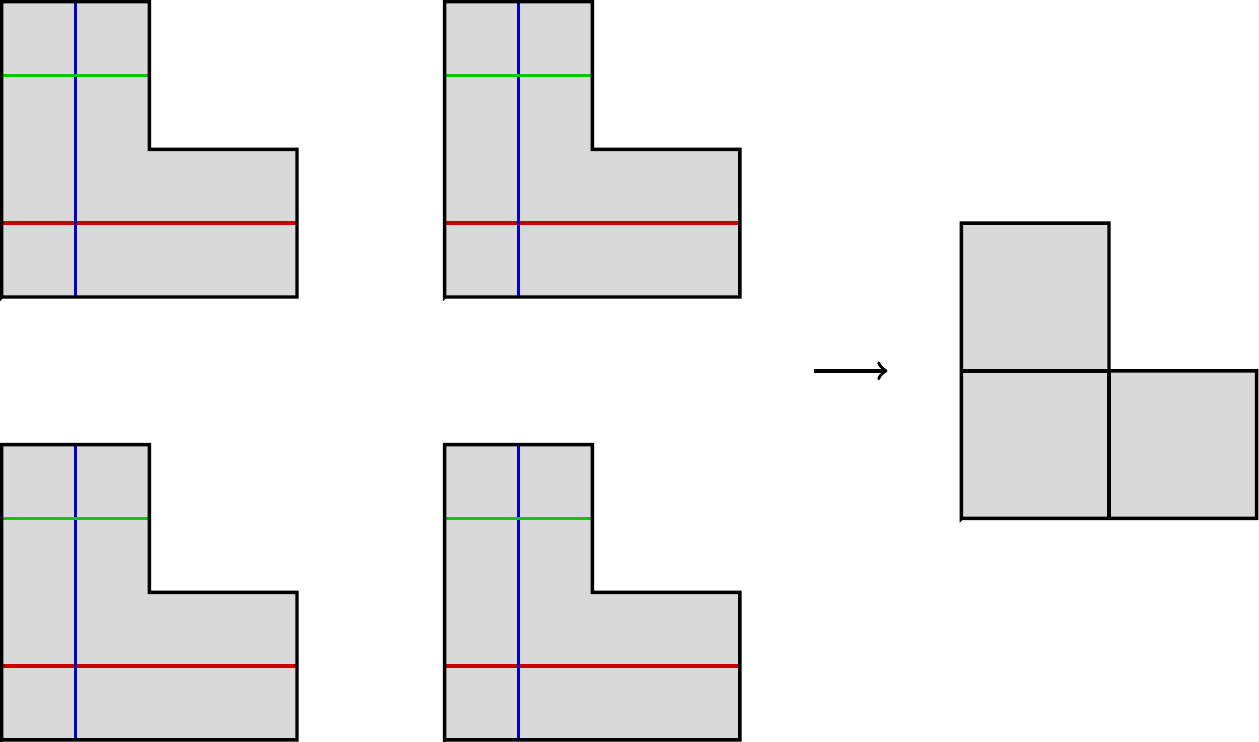}}
\caption{The surface $X=X(\Pi(1/2,1/2))$ seen as a cover of the square-tiled L-shaped surface $L$.}
\label{figu:L-shaped}
\end{figure}

For $i,j \in \{0,1\}$, let $h_{ij}$, $v_{ij}$ and $c_j$ be as in Figure~\ref{figu:homology basis}.
Let $E^{+-}$ be the subspace of $H^1(X,\R)$ with symplectic integer basis $\{h^{+-},v^{+-}\}$, where $h^{+-}$ is the Poincar\'e dual of the cycle $h_{00}+h_{01}-h_{10}-h_{11}$ and $v^{+-}$, of $v_{00}+v_{01}-v_{10}-v_{11}$.
Similarly, define $E^{-+}$, with basis $\{h^{-+},v^{-+}\}$, where $h^{-+}=(h_{00}-h_{01}+h_{10}-h_{11})^*$ and $v^{-+}=(v_{00}-v_{01}+v_{10}-v_{11})^*$.

In our notation, we have that $F^{(h)}_X = E^{+-}$, $h = h^{+-}$, $F^{(v)}_X=E^{-+}$ and $v = v^{-+}$.

The action of $u^2\in\SL(X)$ on the $h_{ij},v_{ij}$, $i,j \in \{0,1\}$ is shown in Figure~\ref{figu:u2-action} and is described by
\begin{align*}
\rho_{H^1}(u^2): \quad
h_{ij}^* & \mapsto h_{ij}^* \\
v_{ij}^* & \mapsto v_{ij}^* + h_{ij}^* + c_j^*.
\end{align*}
Denoting $c^{+-} \coloneqq 2c_0 - 2c_1$ we have that $c^{+-} = 2h^{+-}$. Letting $c^{-+} \coloneqq 0$ we obtain that, for $\sigma\in\{+-,-+\}$,
\begin{align*}
\rho_{E^\sigma}(u^2): \quad
h^\sigma & \mapsto h^\sigma \\
v^\sigma & \mapsto v^\sigma + h^\sigma + c^\sigma.
\end{align*}

\begin{figure}[ht]
\centering\includegraphics[width=\textwidth,height=.25\textheight,keepaspectratio]{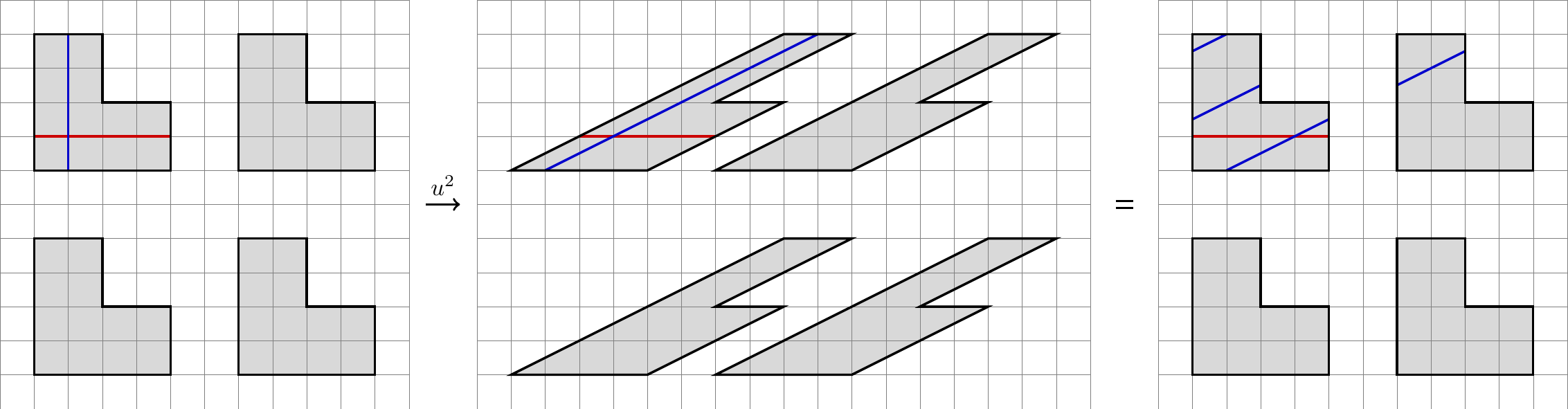}
\caption{The action of $u^2$ on $h_{ij},v_{ij}$, $i,j\in\{0,1\}$.}
\label{figu:u2-action}
\end{figure}

Thus, with the choice of basis as above, we get $\rho_{E^{+-}}(u^2) = u^3$ and $\rho_{E^{-+}}(u^2) = u$. Similarly, we can see that $\rho_{E^{+-}}({}^{t\!}u^2) = {}^{t\!}u$ and $\rho_{E^{-+}}({}^{t\!}u^2) = {}^{t\!}u^3$. In particular, the two representations are isomorphic $\rho_{F^{(h)}}(\SL(X))\cong \rho_{F^{(v)}}(\SL(X))$.

\subsection{Counting problem on wind-tree models} \label{sect:cpwtm}
In this work, we are concerned with counting periodic trajectories in the wind-tree billiard. Obviously, any periodic trajectory can be translated by an element in $\Z^2$ to obtain a new (non-isotopic) periodic trajectory. Then, we shall count (isotopy classes of) periodic trajectories of bounded length in the wind-tree billiard, up to $\Z^2$-translations.

There is a one to one correspondence between billiard trajectories in $\Pi$ and geodesics in $X_\infty$. But $X_\infty$ is the $\Z^2$-covering of $X$ given by $h,v\in H^1(X,\Z)$, which means that closed curves $\gamma$ in $X$ lift to closed curves in $X_\infty$ if and only if $h(\gamma_C)=v(\gamma_C)= 0$.
In fact, by definition of the covering, the monodromy of a closed curve $\gamma$ in $X$ is the translation by $(h(\gamma),v(\gamma))\in\Z^2$. The cylinders in the cover $X_\infty$ are exactly the lift of those cylinders $C$ in $X$ whose core curve $\gamma_C$ has trivial monodromy. In particular, cylinders in $X_\infty$ are always isometric to their projection on $X$. When a cylinder $C$ does not satisfy this condition, it lifts to $X_\infty$ as a strip, isometric to the product of an open interval and a straight line.

\subsubsection{Good and bad cylinders} \label{sect:good bad cylinders}
Let $f=h$ or $v$, and $F=F^{(f)}$.
Note that cylinders $C$ in $X$ such that $f(\gamma_C) = 0$, split naturally into two families:
(a) the family of cylinders such that $\hat f(\gamma_C) = 0$ for all $\hat f\in F_X$, that is, $\gamma_C\in\mathrm{Ann}(F_X)$, which we call \emph{$F$-good cylinders}, and
(b) the family of cylinders that are not $F$-good, but $f(\gamma_C) = 0$. These later are called \emph{$(F,f)$-bad cylinders}.
The notion of $F$-good cylinders was first introduced by Avila--Hubert~\cite{AH} in order to give a geometric criterion for recurrence of $\Z^d$-periodic flat surfaces. Good cylinders are favorable to our purposes. In fact, since the Kontsevich--Zorich cocycle preserves the intersection form and $F$ is equivariant, they define an $\slr$-equivariant family of cylinders, which is much more tractable than arbitrary collections of cylinders.

For a wind-tree billiard $\Pi$, we denote by $N(\Pi,L)$, the number of (isotopy classes of) periodic trajectories (up to $\Z^2$-translations) of length at most $L$, by $N_{good}(X,L)$ the number of $F^{(h)}\oplus F^{(v)}$-good cylinders in $X=X(\Pi)$ of length at most $L$ and $N_{f-bad}(X,L)$, of $(F,f)$-bad cylinders in $X$ of length at most $L$, for $f=h$ or $v$ and $F=F^{(f)}$.

Note that \[N_{good}(X,L) \leq N(\Pi,L) \leq N_{good}(X,L) + N_{h-bad}(X,L) + N_{v-bad}(X,L).\]
Therefore, it is enough to understand the asymptotic behavior of $N_{good}(X,L)$, $N_{h-bad}(X,L)$ and $N_{v-bad}(X,L)$ separately.

The author~\cite{Pa} used this to reduce the counting problem on wind-tree models to the counting of good cylinders. In fact, we have the following (see~\cite[Theorem~1.3]{Pa}).

\begin{theo*} 
Let $\Pi$ be a wind-tree billiard, $X=X(\Pi)$ the associated compact flat surface, let $f=h$ or $v$ and $F=F^{(f)}$ be one of the associated subbundles $F^{(h)}$ or $F^{(v)}$. Then, the number $N_{f-bad}(X,L)$, of $(F,f)$-bad cylinders in $X$ of length at most $L$, has \emph{subquadratic} asymptotic growth rate, that is, $N_{f-bad}(X,L)=o(L^2)$.
\end{theo*}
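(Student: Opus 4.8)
The plan is to turn the ``bad'' condition into a nonzero \emph{integer charge} and then to show, using the non-vanishing Lyapunov exponents of $F$, that carrying such a charge forces the offending cylinders into an exponentially thin set of directions, too sparse to sustain quadratic growth. I begin with the algebraic reduction. Since $F=F^{(f)}$ is $2$-dimensional, symplectic and defined over $\Z$, with $f\in F_X(\Z)$, denote by $p_F(\gamma_C)\in F_X$ the $F$-component of the Poincar\'e dual of the core curve $\gamma_C$, so that $f(\gamma_C)=\langle f,p_F(\gamma_C)\rangle$ for the symplectic intersection form. In a $2$-dimensional symplectic space $\langle f,w\rangle=0$ holds exactly when $w\in\R f$ (the alternating form is nondegenerate and $\langle f,f\rangle=0$); hence an $(F,f)$-bad cylinder is precisely one with $p_F(\gamma_C)=n_C f'$ for a nonzero integer $n_C$, where $f'\in F_X(\Z)$ is the primitive vector proportional to $f$. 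Thus every bad cylinder carries a nonzero integer charge supported on the single rank-one lattice $\Z f'$.

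Next I would renormalize. For a bad cylinder of length $\ell=|\hol(\gamma_C)|\le L$ in direction $\theta$, apply $g_{\log\ell}r_{-\theta}$ so that the cylinder becomes roughly unit length on $Y=g_{\log\ell}r_{-\theta}X$. Because the Kontsevich--Zorich cocycle preserves the integer lattice of $F$, the transported charge $\KZ(g_{\log\ell}r_{-\theta},X)(n_C f')$ is a nonzero element of $F_Y(\Z)$, and since the core curve of a unit cylinder has bounded Hodge norm, this transported charge has bounded norm on $Y$. Inverting the cocycle, the product $|n_C|\cdot\|\KZ(g_{\log\ell}r_{-\theta},X)f'\|$ stays bounded, so the transported copy of $f'$ must be \emph{contracted}: the direction $\theta$ has to be a contracting direction for $f'$ over renormalization time $T=\log\ell$, and $|n_C|$ is controlled by the reciprocal of the contraction factor.

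The heart of the matter is then to exploit the non-vanishing exponents. By Theorem~\ref{theo:positive Lyapunov exponent} the subbundle $F$ has Lyapunov exponents $\pm\lambda$ with $\lambda>0$, so generically a fixed vector is \emph{expanded} like $e^{\lambda T}$ and is contracted only when $\theta$ lies close to the ($g_t$-equivariant, measurable) stable line of $F$. I would show that the set of directions in which $f'$ is contracted over time $T=\log\ell$ has measure decaying like $\ell^{-c}$ for some $c>0$. Combined with Masur's quadratic upper bound for the number of cylinders of length about $\ell$ lying in a thin sector of directions (of order $\ell^{2}$ times the measure of the sector), this yields at most $O(\ell^{2-c})$ bad cylinders at scale $\ell$; summing over dyadic scales $\ell\le L$ gives $N_{f-bad}(X,L)=O(L^{2-c})=o(L^{2})$.

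The main obstacle is exactly this sparsity estimate: turning the asymptotic, almost-everywhere conclusion of Oseledets' theorem into a uniform, finite-time bound on the measure of contracting directions. This demands control of the non-uniformity of the Oseledets splitting (for instance via a large-deviation or mixing input for the Teichm\"uller flow) together with the Eskin--Masur non-divergence estimates to handle excursions of the renormalized surface $Y$ into the cusp, where the comparisons between Hodge norm, lattice systole and cocycle norm degrade. A technically cleaner alternative is to phrase the count through a Siegel--Veech transform and prove that the associated ``bad'' Siegel--Veech constant vanishes; the difficulty there is that, because the bad family is \emph{not} $\slr$-equivariant, one cannot invoke the Eskin--Masur equidistribution directly and must instead feed the positivity of $\lambda$ in as the genuine source of the decay.
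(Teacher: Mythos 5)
Your algebraic reduction is correct and matches Remark~\ref{rema:cylinder projections}: since $F$ is two-dimensional and symplectic, a cylinder is $(F,f)$-bad exactly when its projection to $F_X$ is a nonzero integer multiple of the primitive vector on the line $\R f$ (in fact $\pm f$, by integrality of the cocycle). But your argument then hinges on the step you yourself single out and do not supply: a \emph{finite-time, quantitative} estimate, uniform over the circle, saying that the set of directions $\theta$ for which $\KZ(g_{\log \ell}r_{-\theta},X)$ contracts $f'$ has measure $O(\ell^{-c})$. Theorem~\ref{theo:positive Lyapunov exponent} together with Oseledets' theorem gives only an asymptotic, almost-everywhere statement with no rate; for a \emph{fixed} surface $X$ (which here is never generic for any $\slr$-invariant measure --- its orbit closure is a Teichm\"uller curve in the Veech case) there is no off-the-shelf large-deviation bound for circle averages of the Kontsevich--Zorich cocycle, and producing one would require a substantial spectral-gap/exponential-mixing input along the specific orbit closure that you do not provide. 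Without that estimate the dyadic count $O(\ell^{2-c})$ per scale is unsupported, and your fallback via a Siegel--Veech transform is blocked for precisely the reason you record: the bad family is not $\slr$-equivariant, so Eskin--Masur equidistribution cannot be invoked. As written, the proof does not close.

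It is worth noting that for the statement as quoted (mere $o(L^2)$, no rate) the polynomial sparsity you are after is overkill: the qualitative fact that a fixed flat surface is Oseledets-generic in almost every direction (Chaika--Eskin~\cite{CE}) already gives $\|\KZ(g_t r_{-\theta},X)f\|\to\infty$ for a.e.\ $\theta$, hence bad cylinders in a.e.\ direction have bounded length, and an Egorov-type argument combined with Masur's quadratic upper bound~\cite{Ma1,Ma2} in thin sectors yields $o(L^2)$; this is essentially the route of~\cite{Pa}, which is the proof the present paper cites for this statement. The present paper's own contribution (Theorem~\ref{theo:bad cylinders}, in the Veech case) goes by an entirely different, group-theoretic route that sidesteps your obstacle: the collection $\B$ of bad cylinders is finitely saturated by the subgroup $\Gamma_{bad}=\{g\in\SL(X):\rho_F(g)f=\pm f\}$, the count is dominated by the orbital function of $\Gamma_{bad}$ via Dal'Bo's comparison and Roblin's theorem (Theorem~\ref{theo:critical exponent}), and $\delta(\Gamma_{bad})<1$ is proved by Brooks' amenability dichotomy (Theorem~\ref{theo:Brooks}): $\rquo{\Gamma_{bad}}{\ker\rho_F}$ is amenable while $\rquo{\SL(X)}{\ker\rho_F}$ is not. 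In that argument the positive Lyapunov exponent is used only to produce a hyperbolic element and rule out almost invariant splittings (non-elementarity of $\rho_F(\SL(X))$), never as a source of quantitative contraction estimates --- precisely because such estimates, as your attempt illustrates, are the genuinely hard part.
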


Thus, the counting problem on wind-tree models may be reduced to count $F^{(h)}\oplus F^{(v)}$-good cylinders, which has quadratic asymptotic growth rate thanks to a result of Eskin--Masur~\cite{EMa}.
However, in this work, we are interested in an effective version and therefore, bad cylinders have to be taken into account.

\begin{rema} \label{rema:cylinder projections}
An useful characterization of bad cylinders in our case is the following. A cylinder $C$ is $(F,f)$-bad if and only if $\pr_{F_X} \gamma_C = \pm f$. In fact, since $F$ is symplectic and two dimensional, $C$ is an $(F,f)$-bad cylinder if and only if $\pr_{F_X} \gamma_C \neq 0$ is colinear to $f$ (see~\cite[Remark~3.1]{Pa}). Moreover, the action of $\slr$ on homology (that is, the Kontsevich--Zorich cocycle) is by integer matrices, then, this is equivalent to say that $\pr_{F_X} \gamma_C = \pm f$.
\end{rema}

\subsubsection{Veech wind-tree billiards} \label{sect:vwtm}
Let $\Pi$ be a wind-tree billiard.
We define the \emph{Veech group of $\Pi$} to be $\PSL(\Pi)=\PSL(X(\Pi))$ and we say that $\Pi$ is a \emph{Veech wind-tree billiard} if $\PSL(\Pi)$ is a lattice. We stress that these definitions are not standard as it does not correspond to the (projection to $\pslr$ of the) derivatives of affine orientation-preserving diffeomorphisms of the unfolded billiard $X_\infty(\Pi)$, but to those of $X(\Pi)$, the $\Z^2$-quotient of the unfolded billiard.

In the classical case, of rectangular obstacles, we denote $\Pi(a,b)$ the wind-tree billiard with rectangular obstacles of side lengths $a,b\in\left]0,1\right[$. Thank to results of Calta~\cite{Ca} and McMullen~\cite{McM1,McM2}, it is possible to classify completely Veech wind-tree models in the classical case (see~\cite[Theorem~3]{DHL}).

\begin{theo*}[Calta, McMullen]
The wind-tree model $\Pi(a, b)$ is a Veech wind-tree billiard if and only if either
$a, b \in \mathbb{Q}$ or there exist $x,y\in \mathbb{Q}$ and a square-free integer $D>1$ such that $1/(1-a)=x+y\sqrt{D}$ and $1/(1-b)=(1-x)+y\sqrt{D}$.
\end{theo*}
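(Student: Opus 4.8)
The plan is to reduce the statement to the classification of Veech surfaces in the minimal stratum $\mathcal{H}(2)$ of genus two, due to Calta~\cite{Ca} and McMullen~\cite{McM1,McM2}. First I would realize $X=X(\Pi(a,b))$ explicitly as the translation surface glued from four copies of the fundamental billiard table; it carries a group $G\cong\mathbb{Z}/2\times\mathbb{Z}/2$ of translation automorphisms permuting the copies, coming from the horizontal and vertical symmetries of the obstacle. Accordingly $H^1(X,\R)$ splits into $G$-characters and the holonomy form decomposes, the nontrivial isotypic pieces corresponding to quotient translation surfaces. I would single out a quotient $L$ of $X$ by an order-two subgroup of $G$ that lies in $\mathcal{H}(2)$, an L-shaped surface as in \S\ref{sect:example}. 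Since $X\to L$ is an affine covering equivariant for the symmetries, $\PSL(X)$ and $\PSL(L)$ are commensurable---indeed of finite index---and, as being a lattice is a commensurability invariant, $\Pi$ is a Veech wind-tree billiard if and only if $L$ is a Veech surface. (In the $(1/2,1/2)$ case this is the equality $\SL(X)=\SL(L)$ already recorded in \S\ref{sect:example}.)

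Next I would make the shape of $L$ explicit in terms of $a$ and $b$. The obstacle of side lengths $a,b$ sits inside the unit cell, leaving corridors of widths $1-a$ and $1-b$; tracking these lengths through the unfolding and the cut-and-paste to an L-shaped polygon, and then normalizing by the $\slr$-action to put $L$ into McMullen's standard form, the two free parameters of the L become $A\coloneqq 1/(1-a)$ and $B\coloneqq 1/(1-b)$. This is the only genuinely computational step; it is routine but finicky, as one must follow the gluings carefully and check that the affine normalization is what produces the reciprocals.

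Finally I would invoke the genus-two classification. A surface in $\mathcal{H}(2)$ is a Veech surface if and only if either it is square-tiled (the arithmetic case), which in these coordinates happens exactly when $A,B\in\mathbb{Q}$, equivalently $a,b\in\mathbb{Q}$; or its Jacobian admits real multiplication by an order in a real quadratic field $\mathbb{Q}(\sqrt{D})$ with the holonomy form an eigenform. The eigenform condition translates, after the above normalization, into the requirement that $A$ and $B$ be Galois conjugate in the precise form $\overline{A}+B=1$ in $\mathbb{Q}(\sqrt{D})$. Writing $A=x+y\sqrt{D}$ with $x,y\in\mathbb{Q}$, this reads $B=(1-x)+y\sqrt{D}$, i.e. $1/(1-a)=x+y\sqrt{D}$ and $1/(1-b)=(1-x)+y\sqrt{D}$, as claimed. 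Combining the two cases yields the stated dichotomy.

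The main obstacle is the input from genus-two Teichm\"uller dynamics rather than anything in the reduction: the existence-and-uniqueness classification of Teichm\"uller curves in $\mathcal{H}(2)$ via real multiplication on Hilbert modular surfaces is the deep ingredient, which I would use as a black box (\cite{Ca,McM1,McM2}). The remaining delicate points are bookkeeping: (i) verifying that passing to the quotient $L$ changes the Veech group only by finite index, which requires controlling the $G$-action on the eigenform decomposition and respecting the marking and no-elliptic-element conventions of Remarks~\ref{rema:marking} and~\ref{rema:elliptic elements}, so as to rule out affine symmetries of $L$ that fail to lift; and (ii) pinning down the normalization so that the eigenform relation emerges exactly as $\overline{A}+B=1$ with the correct constant. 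Point (ii) is precisely what produces the asymmetric pair $x$ and $1-x$ in the statement, so it must be handled with care.
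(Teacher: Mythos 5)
Your overall route is exactly the one behind the paper's citation: the paper does not prove this statement itself, it quotes it from \cite[Theorem~3]{DHL}, and the proof there is precisely your reduction --- pass from $X=X(\Pi(a,b))$ to an L-shaped quotient in $\mathcal{H}(2)$, use that Veech groups are commensurable under finite translation coverings (Gutkin--Judge, \cite{GJ}), note that being a lattice is a commensurability invariant, and invoke the Calta--McMullen classification \cite{Ca,McM1,McM2}. Your normalization $A=1/(1-a)$, $B=1/(1-b)$ and the reformulation of McMullen's condition as $\overline{A}+B=1$ in $\mathbb{Q}(\sqrt{D})$ are both correct.

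There is, however, one step that is false as written: the quotient of $X$ by an \emph{order-two} subgroup of $G$ does not lie in $\mathcal{H}(2)$. The surface $X$ has genus $5$ and lies in $\mathcal{H}(2,2,2,2)$. A translation automorphism fixing a regular point is the identity, and the rotations of a cone of angle $6\pi$ compatible with the translation structure form a group of order $3$, so the $2$-group $G$ acts freely on $X$ (zeros included). Riemann--Hurwitz then forces the quotient by any order-two subgroup to be a genus-$3$ surface in $\mathcal{H}(2,2)$, to which the genus-two classification does not apply. The L-shaped surface is the quotient by the \emph{full} Klein four-group $G$: the covering $X\to L$ has degree $4$, as one also checks from areas in the $(1/2,1/2)$ case ($X$ has area $3$, while $L$ is tiled by three squares of side $1/2$ and has area $3/4$, consistently with Figure~\ref{figu:L-shaped}). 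A smaller imprecision: Gutkin--Judge gives commensurability of $\PSL(X)$ and $\PSL(L)$, not finite-index containment of one in the other (the equality $\SL(X)=\SL(L)$ in \S\ref{sect:example} is special to that example); commensurability is all you need for the lattice property to transfer. With the quotient corrected to $X/G$, the remainder of your argument goes through and coincides with the cited proof.
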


In this work we are concerned only with Veech wind-tree billiards. Most of the tools we use to deal with bad cylinders comes from geometric considerations of the action (on the upper half-plane $\H$) of the lattice Veech group $\PSL(\Pi)$ and, more precisely, of some particular subgroups of $\PSL(\Pi)$. These groups are Fuchsian groups. In the following, we present a brief recall of the objects we need and some of their properties.

\subsection{Fuchsian groups}
A Fuchsian group is a discrete subgroup of $\pslr$. 
A Fuchsian group $\Gamma$ acts properly discontinuously on $\H$. In particular, the orbit $\Gamma z$ of any point $z\in\H$ under the action of $\Gamma$ has no accumulation points in $\H$. There may, however, be limit points on the real axis. Let $\Lambda(\Gamma)$ be the limit set of $\Gamma$, that is, the set of limits points for the action of $\Gamma$ on $\overline\H$, $\Lambda(\Gamma)\subset\overline\R$. The limit set may be empty, or may contain one or two points, or may contain an infinite number. A Fuchsian group is of the first type if its limit set is the closed real line $\overline\R=\R\cup\{\infty\}$. This happens in the case of lattices, but there are Fuchsian groups of the first kind of infinite covolume. These latter are always infinitely generated.

When the limit set is finite, we say that $\Gamma$ is elementary. In such case, $\Gamma$ is cyclic.

In this work we shall mainly handle two type of Fuchsian groups. The first are Veech groups of Veech surfaces, which are lattices by definition and the other are the subgroups of the Veech group given by $\pker\rho_F$, for equivariant subbundles $F\subset H^1$. Recall that $\rho_F:\SL(X)\to\SL(F_X)$. Thus, $\ker\rho_F$ is a subgroup of $\SL(X)$, $\pker\rho_F$ is the image of $\ker\rho_F$ in $\PSL(X)$.

The following result allows us to better understand these groups when $F$ is a $2$-dimensional subbundle of $\ker\hol$.

\begin{theo}[{\cite[Theorem~5.6]{HoW}}] \label{theo:ker non-elementary}
Let $X$ be a Veech surface and $F$ an integer (defined over $\Z$) $2$-dimensional subbundle of $\ker\hol$ over the $\slr$-orbit of $X$. Then, $\pker \rho_F$ is a Fuchsian group of the first kind.
\end{theo}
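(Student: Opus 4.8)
The plan is to realize $\pker\rho_F$ as a non-trivial normal subgroup of the lattice $\PSL(X)$ and then to invoke the classical principle that a non-trivial normal subgroup of a non-elementary Fuchsian group has the same limit set as the ambient group. Normality is immediate: $\ker\rho_F$ is the kernel of the homomorphism $\rho_F\colon\SL(X)\to\SL(F_X)$, hence normal in $\SL(X)$, and its image $\pker\rho_F$ is normal in $\PSL(X)$ and discrete (it is a subgroup of the Fuchsian group $\PSL(X)$), so it is again a Fuchsian group. Since $X$ is a Veech surface, $\PSL(X)$ is a lattice; in particular it is non-elementary and of the first kind, $\Lambda(\PSL(X))=\overline\R$.

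The crux is to show $\pker\rho_F\neq\{1\}$. By our marking convention $\PSL(X)$ is torsion-free (no elliptic elements), so a finite kernel is trivial, and it suffices to prove that $\rho_F$ is not injective. I would do this by contradiction, using Lyapunov exponents. Since $F$ is defined over $\Z$, the Kontsevich--Zorich cocycle preserves the lattice $F_X(\Z)$, so with respect to an integral basis $\rho_F(\SL(X))$ lies in $\SL(2,\Z)$ and is therefore discrete in $\slr$. Let $\lambda_F=\lambda_1(\M,F)$ be the top Lyapunov exponent of $F$. By a classical result the top exponent of the whole Hodge bundle equals $1$ and is simple, its eigendirection being the tautological subbundle; as $F\subset\ker\hol$ is symplectically orthogonal to the tautological subbundle, it follows that $\lambda_F<1$.

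Now suppose, for contradiction, that $\rho_F$ is injective, and write $G=\rho_F(\PSL(X))\subset\SL(2,\Z)$. Using $d(o,\gamma o)=2\log\|\gamma\|+O(1)$ together with Oseledets' theorem, a generic $\gamma$ satisfies $\log\|\rho_F(\gamma)\|\le(\lambda_F+\epsilon)\log\|\gamma\|$, so its image is displaced by at most $(\lambda_F+\epsilon)\,d(o,\gamma o)+O(1)$. Under the injectivity hypothesis distinct $\gamma$ give distinct elements of $G$, so counting the $e^{(1-o(1))T}$ elements of $\PSL(X)$ in the ball of radius $T$ (its critical exponent being $1$) shows that $G$ has at least $e^{(1-o(1))T}$ elements within radius $(\lambda_F+\epsilon)T$. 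Hence the critical exponent of $G$ is at least $(1-o(1))/(\lambda_F+\epsilon)$, which exceeds $1$ once $\epsilon$ is small. This contradicts the elementary fact that every discrete subgroup of $\pslr$ has critical exponent at most $1$. I expect this conversion of the averaged Oseledets estimate into a genuine orbit-count to be the main technical obstacle; making it rigorous requires controlling the proportion of $\gamma$ in large balls obeying the Lyapunov bound, for instance via equidistribution of large spheres in the lattice $\PSL(X)$. (If $\lambda_F=0$ the argument is immediate: a discrete group has only finitely many elements of bounded norm, so infinitely many $\gamma$ must lie in $\ker\rho_F$.) Therefore $\rho_F$ is not injective and $\pker\rho_F$ is infinite.

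It remains to upgrade non-triviality to the first-kind conclusion. Being a non-trivial normal subgroup of the non-elementary group $\PSL(X)$, $N:=\pker\rho_F$ is itself non-elementary---otherwise $\PSL(X)$ would normalize, hence preserve, the finite fixed-point set of $N$ and would itself be elementary. Thus $\Lambda(N)\neq\emptyset$, and by normality $\Lambda(N)=\gamma\Lambda(N)$ for all $\gamma\in\PSL(X)$, so $\Lambda(N)$ is a non-empty closed $\PSL(X)$-invariant subset of $\overline\R$. By minimality of the limit set of the non-elementary group $\PSL(X)$ we get $\Lambda(N)\supseteq\Lambda(\PSL(X))=\overline\R$, whence $\Lambda(N)=\overline\R$. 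Therefore $\pker\rho_F$ is a Fuchsian group of the first kind.
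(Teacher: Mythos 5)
Your overall scaffolding is sound, and it is worth noting that the paper itself does not prove this statement at all: it imports it wholesale from Hooper--Weiss \cite[Theorem~5.6]{HoW}. Your reduction is the natural one: $\pker\rho_F$ is normal in the lattice $\PSL(X)$ (which is torsion-free by the paper's marking convention, so a non-trivial kernel is automatically infinite), and your closing step --- a non-trivial normal subgroup $N$ of a non-elementary Fuchsian group is itself non-elementary, and $\Lambda(N)$, being closed, non-empty and $\PSL(X)$-invariant, must contain $\Lambda(\PSL(X))=\overline\R$ by minimality of the limit set --- is correct and standard. Likewise the peripheral facts you use are fine: $\lambda_F<1$ holds by Forni's spectral gap since $F\subset\ker\hol$, the image $\rho_F(\SL(X))$ is discrete because $F$ is $2$-dimensional and defined over $\Z$, and a discrete subgroup of $\pslr$ has critical exponent at most $1$. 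Everything therefore hinges on the one substantive claim, the non-injectivity of $\rho_F$.

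That step has a genuine gap, which you flag yourself but do not close. Oseledets' theorem is a statement about almost every geodesic direction with respect to Lebesgue measure, with non-uniform constants; it gives no control on how many of the $\asymp e^{T}$ lattice elements $\gamma$ with $d_\H(i,\gamma i)\le T$ satisfy $\log\|\rho_F(\gamma)\|\le(\lambda_F+\epsilon)\log\|\gamma\|$. The directions to lattice points form an exponentially large, exponentially fine set, and an exceptional set of directions of measure zero (or of measure $e^{-cT}$, which is what an Egorov- or large-deviations-type argument would produce) can in principle still capture essentially all of them at scale $T$; weak-$*$ equidistribution of spheres does not exclude this, since the Egorov ``good set'' is merely measurable and weak convergence gives only one-sided bounds on open and closed sets. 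For the contradiction you need at least $e^{sT}$ good elements for some $s>\lambda_F+\epsilon$, and to get that one needs genuinely quantitative inputs: exponential large-deviation estimates for the Kontsevich--Zorich cocycle over the geodesic flow on the Teichm\"uller curve, effective equidistribution of lattice directions at exponentially small scales, and a bounded-distortion comparison between $\|\rho_F(\gamma)\|$ and the cocycle along the geodesic from $i$ to $\gamma i$. None of this is supplied, and none of it is routine. As written, then, this is a plausible strategy with an unproven core rather than a proof; the honest alternatives are to supply those estimates or to do what the paper does and simply invoke \cite[Theorem~5.6]{HoW}.
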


In particular, in the case of Veech wind-tree billiards, the hypothesis are satisfied by the subbundles $F^{(h)}$ and $F^{(v)}$ and therefore, $\pker\rho_F$ is a Fuchsian group of the first kind for $F=F^{(h)},F^{(v)}$.

\subsubsection{Critical exponent} \label{sect:critical exponent}
Another concept which is of major relevance in this work is that of the critical exponent of a Fuchsian group.
For an introduction to the subject, we refer the reader to Peign\'e~\cite{Pe}.

Let $\Gamma$ be a Fuchsian group. The orbital function $n_\Gamma:\R_+\to\N$ is defined by $n_\Gamma(R)=\#\{g\in\Gamma:\; d_\H(i,gi)\leq R\}$.
The exponent \[\delta(\Gamma) \coloneqq \limsup_{R\to\infty} \frac{1}{R}\ln n_\Gamma(R)\] is the critical exponent of $\Gamma$. It corresponds to the critical exponent (the abscissa of convergence in $\R_+$) of the Poincar\'e series defined by \[P_\Gamma(s) \coloneqq \sum_{g\in\Gamma} e^{-s d_\H(i,gi)}.\]
That is, $P_\Gamma(s)$ diverges for $s < \delta(\Gamma)$ and converges for $s > \delta(\Gamma)$.

Note that in the definition of the critical exponent $\delta(\Gamma)$ it is innocuous if we change $d_\H(i,gi)$ for $d_\H(x,gy)$, for some $x,y\in\H$ or, in particular, if we change $\Gamma$ for some conjugate of $\Gamma$, either in the definition of the orbital function $n_\Gamma$ or in the Poincar\'e series $P_\Gamma$.

A result of Roblin~\cite{Ro1,Ro2} relates in a sharper way the asymptotic behavior of the orbital function and the critical exponent.

\begin{theo}[Roblin] \label{theo:orbital function}
Let $\Gamma$ be a non-elementary Fuchsian group. Then 
\[n_\Gamma(r) = O(e^{\delta r}),\] as $L\to\infty$.
\end{theo}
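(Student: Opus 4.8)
The plan is to prove the sharp upper bound via Patterson--Sullivan theory. Note first that the definition of $\delta=\delta(\Gamma)$ as a $\limsup$ only yields $n_\Gamma(r)=O(e^{(\delta+\epsilon)r})$ for every $\epsilon>0$; removing the loss of $\epsilon$ is the entire content of the statement, so a genuine geometric input is needed. First I would construct a $\delta$-dimensional $\Gamma$-invariant conformal density $(\mu_x)_{x\in\H}$ supported on the limit set $\Lambda(\Gamma)$. Following Patterson, one forms the normalized orbital measures
\[
\mu_x^s \;=\; \frac{1}{P_\Gamma(s)}\sum_{g\in\Gamma} e^{-s\,d_\H(x,gx)}\,\varepsilon_{gx}
\]
on $\overline\H$ (here $\varepsilon_{gx}$ is the unit point mass at $gx$) for $s>\delta$, and extracts a weak-$*$ limit as $s\downarrow\delta$. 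When $\Gamma$ is of divergent type the mass escapes to the boundary and the limit is supported on $\Lambda(\Gamma)$; when $\Gamma$ is of convergent type one first replaces $e^{-s\,d_\H(x,gx)}$ by $\phi(d_\H(x,gx))\,e^{-s\,d_\H(x,gx)}$ for a suitable slowly varying $\phi$ (Patterson's trick) so that the modified series diverges at $s=\delta$. The resulting finite measures satisfy the equivariance $\gamma_*\mu_x=\mu_{\gamma x}$ and the conformality $\frac{d\mu_x}{d\mu_y}(\xi)=e^{-\delta\beta_\xi(x,y)}$, where $\beta_\xi(x,y)$ is the Busemann cocycle at $\xi\in\overline\R$.

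Next I would prove Sullivan's shadow lemma: there are constants $C>0$ and $R_0>0$ such that, writing $\mathcal{O}_R(x,gx)\subset\overline\R$ for the shadow on the boundary of the ball $B(gx,R)$ viewed from $x$, one has for all $R\ge R_0$ and all $g\in\Gamma$
\[
\tfrac{1}{C}\,e^{-\delta\,d_\H(x,gx)} \;\le\; \mu_x\big(\mathcal{O}_R(x,gx)\big) \;\le\; C\,e^{-\delta\,d_\H(x,gx)}.
\]
The upper bound follows directly from conformality. The lower bound is where the non-elementary hypothesis enters: since $\Lambda(\Gamma)$ is then a perfect set, $\mu_x$ charges every boundary neighbourhood with a uniform lower bound, and this positivity is transported along the orbit by equivariance.

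Finally I would run a bounded-overlap packing argument. Fix $x=i$ and consider, for each integer $k\ge 0$, the orbit points with $d_\H(x,gx)\in(k,k+1]$. If $\xi\in\mathcal{O}_R(x,gx)\cap\mathcal{O}_R(x,hx)$ with both points in this annulus, a hyperbolic estimate forces $d_\H(gx,hx)\le\rho$ for some $\rho=2R+O(1)$; by proper discontinuity the number of orbit points within distance $\rho$ of a given one equals the finite constant $N:=n_\Gamma(\rho)$, independent of $k$. Thus the shadows in one annulus have multiplicity at most $N$, so
\[
\sum_{\substack{g\in\Gamma\\ d_\H(x,gx)\in(k,k+1]}} \mu_x\big(\mathcal{O}_R(x,gx)\big) \;\le\; N\,\|\mu_x\|.
\]
Combining this with the shadow-lemma lower bound gives $\#\{g\in\Gamma:\,d_\H(x,gx)\in(k,k+1]\}\le C\,N\,\|\mu_x\|\,e^{\delta(k+1)}$, and summing the resulting geometric series over $k\le r$ yields $n_\Gamma(r)=O(e^{\delta r})$, as required.

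The main obstacle is the construction of the conformal density in the convergent-type case: one must choose Patterson's modifying function $\phi$ genuinely slowly varying while ensuring the limiting density is supported exactly on $\Lambda(\Gamma)$ and is conformal of the correct dimension $\delta$. The lower bound in the shadow lemma is the second delicate point, since it is precisely there that the non-elementary hypothesis is used and where one must rule out degeneracy (atoms or concentration) of the measure $\mu_x$.
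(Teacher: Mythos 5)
Your sketch is correct, and it is the classical Patterson--Sullivan argument: construct a $\delta$-dimensional conformal density (with Patterson's modification in the convergent case), prove Sullivan's shadow lemma --- where non-elementarity supplies the uniform lower bound by ruling out concentration of $\mu_x$ at a single boundary point --- and then run the bounded-multiplicity packing over annuli, summing the geometric series (which uses $\delta>0$, again from non-elementarity). The paper itself gives no proof of this statement --- it is imported from Roblin \cite{Ro1,Ro2} --- and your argument is precisely the approach of those cited works, so the two match.
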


Consider now the following sungroups of $\pslr$:
\begin{itemize}
\item $\displaystyle K = \left\{\begin{pmatrix} \cos\theta & \sin\theta \\ -\sin\theta & \cos\theta \end{pmatrix}:\; \theta\in\left[0,\pi\right)\right\}$,
\item $\displaystyle A = \left\{\begin{pmatrix} e^t & 0 \\ 0 & e^{-t} \end{pmatrix}:\; t\in\R\right\}$, and
\item $\displaystyle N = \left\{\begin{pmatrix} 1 & t \\ 0 & 1 \end{pmatrix}:\; t\in\R\right\}$.
\end{itemize}
Every element $g\in\pslr\setminus\{id\}$ is conjugated to some element in $K$, $A$ or $N$. In fact, we have the following:
\begin{itemize}
\item $|\tr(g)|< 2$ if and only if $g$ is conjugated to some element of $K$. In this case $g$ is called elliptic and it fixes exactly one point in $\overline\H$, which belongs to $\H$;
\item $|\tr(g)|> 2$ if and only if $g$ is conjugated to some element of $A$. In this case $g$ is called hyperbolic and it fixes exactly two point in $\overline\H$, which belongs to $\partial\overline\H=\overline\R$; and
\item $|\tr(g)|= 2$ if and only if $g$ is conjugated to some (and therefore, to every) element of $N$. In this case $g$ is called parabolic and it fixes exactly one point in $\overline\H$, which belongs to $\partial\H$.
\end{itemize}

If $\Gamma$ is a non-elementary Fuchsian group, it has positive critical exponent $\delta(\Gamma)>0$ and if it contains a parabolic element, then $\delta(\Gamma) > 1/2$.

One of the main ingredients we use to prove our results is the following result of Brooks~\cite{Br} (see also~\cite{RT}).

\begin{theo}[Brooks] \label{theo:Brooks}
Let $\Gamma_0$ be a Fuchsian group and $\Gamma$ be a non-elementary normal subgroup of $\Gamma_0$ such that $\delta(\Gamma)>1/2$.
\begin{enumerate}
\item If $\rquo{\Gamma_0}{\Gamma}$ is amenable, then $\delta(\Gamma) = \delta(\Gamma_0)$.
\item If $\Gamma_0$ is a lattice and $\rquo{\Gamma_0}{\Gamma}$ is non-amenable, then $\delta(\Gamma) < \delta(\Gamma_0) = 1$.
\end{enumerate}
\end{theo}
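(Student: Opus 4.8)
The plan is to translate the statement about critical exponents into one about the bottom of the $L^2$-spectrum of the hyperbolic Laplacian, where (non-)amenability of the quotient has a well-understood effect. Write $M=\rquo{\H}{\Gamma_0}$ and $M'=\rquo{\H}{\Gamma}$; since $\Gamma$ is normal in $\Gamma_0$, the latter is a normal Riemannian (orbifold) covering of the former with deck transformation group $G=\rquo{\Gamma_0}{\Gamma}$. Denote by $\lambda_0(\Gamma)$ the bottom of the spectrum of $-\Delta$ on $\rquo{\H}{\Gamma}$. The first step is to invoke the classical Patterson--Sullivan-type identity
\[
\lambda_0(\Gamma)=\delta(\Gamma)\bigl(1-\delta(\Gamma)\bigr)\qquad\text{whenever }\delta(\Gamma)\ge 1/2,
\]
valid for arbitrary Fuchsian groups (see~\cite{Pe}), and likewise for $\Gamma_0$. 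Since $s\mapsto s(1-s)$ is a strictly decreasing bijection of $[1/2,1]$ onto $[0,1/4]$, in the range imposed by the hypothesis $\delta(\Gamma)>1/2$ the numbers $\delta$ and $\lambda_0$ carry the same information, so comparing critical exponents amounts to comparing bottoms of spectra. I also record the inclusion inequality $\delta(\Gamma)\le\delta(\Gamma_0)$, immediate from $P_\Gamma\le P_{\Gamma_0}$; in particular $\delta(\Gamma_0)\ge\delta(\Gamma)>1/2$, so the formula applies to both groups, and the inclusion reads equivalently $\lambda_0(\Gamma)\ge\lambda_0(\Gamma_0)$.

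For part (1), suppose $G$ is amenable. The point is to produce the reverse inequality $\lambda_0(\Gamma)\le\lambda_0(\Gamma_0)$ by a F\o lner argument: take a function on $M$ whose Rayleigh quotient is within $\varepsilon$ of $\lambda_0(\Gamma_0)$, lift it to $M'$, and multiply by a cutoff built from a F\o lner exhaustion of $G$; amenability guarantees that the gradient created by the cutoff contributes negligibly, so the resulting compactly supported test functions on $M'$ have Rayleigh quotients tending to $\lambda_0(\Gamma_0)$. Combined with $\lambda_0(\Gamma)\ge\lambda_0(\Gamma_0)$ this forces $\lambda_0(\Gamma)=\lambda_0(\Gamma_0)$, and injectivity of $s\mapsto s(1-s)$ on $(1/2,1]$ yields $\delta(\Gamma)=\delta(\Gamma_0)$.

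For part (2), $\Gamma_0$ is a lattice, so $M$ has finite area; the constant function is then an $L^2$ eigenfunction with eigenvalue $0$, whence $\lambda_0(\Gamma_0)=0$ and $\delta(\Gamma_0)=1$. It remains to prove that non-amenability of $G$ forces a strict spectral gap $\lambda_0(\Gamma)>0$ upstairs; granting this, the hypothesis $\delta(\Gamma)>1/2$ together with the formula gives $\lambda_0(\Gamma)=\delta(\Gamma)(1-\delta(\Gamma))>0$, hence $\delta(\Gamma)<1=\delta(\Gamma_0)$. This converse is the genuine content of Brooks' theorem and the step I expect to be the main obstacle. The strategy is a discretization~\cite{Br}: since the base $M$ is compact away from finitely many cusps, $M'$ is quasi-isometric to the Cayley graph of $G$ attached to the deck action, with one fundamental domain lying over each vertex. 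By Kesten's characterization, non-amenability of $G$ is equivalent to a positive Cheeger constant of this graph, i.e.\ to a spectral gap of the combinatorial Laplacian. The remaining and delicate work is to transfer this combinatorial gap into an analytic Poincar\'e inequality on $M'$, bounding the Rayleigh quotient of any compactly supported $\phi$ below by a positive multiple of the combinatorial energy of its induced function on $G$; it is here, in the comparison between combinatorial and Riemannian spectra (in the spirit of Brooks and Kanai), that the finiteness of the base and uniform control of the fundamental domain are essential.
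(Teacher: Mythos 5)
First, a point of comparison: the paper offers no proof of this statement at all --- it is imported verbatim from Brooks~\cite{Br} (see also Roblin--Tapie~\cite{RT}), and the only justification the paper gives is the remark immediately following it, namely that $\lambda_0(\Gamma)=\delta(\Gamma)(1-\delta(\Gamma))$ whenever $\delta(\Gamma)\geq 1/2$. Your opening reduction is therefore exactly the paper's point of view, and your part (1) --- the inclusion inequality $\delta(\Gamma)\leq\delta(\Gamma_0)$ read as $\lambda_0(\Gamma)\geq\lambda_0(\Gamma_0)$, combined with the F\o lner-cutoff argument showing that amenable normal coverings do not raise $\lambda_0$ --- is the standard and correct proof of the amenable direction; likewise the identification $\lambda_0(\Gamma_0)=0$, $\delta(\Gamma_0)=1$ for a lattice.

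The gap is in part (2), and it is not merely a deferred computation: the route you propose fails as stated. Since $\Gamma_0$ is a \emph{non-uniform} lattice, $M=\rquo{\H}{\Gamma_0}$ is not compact, Milnor--\v{S}varc does not apply, and $M'=\rquo{\H}{\Gamma}$ is \emph{not} quasi-isometric to the Cayley graph of $G$: the preimages in $M'$ of the cusps of $M$ contain points arbitrarily far from any $G$-orbit. Worse, even a rough-isometry discretization in the style of Kanai requires a uniform lower bound on the injectivity radius, which fails in the cusps, so positivity of the combinatorial gap cannot be transported to $\lambda_0(M')>0$ by quasi-isometry invariance alone. Brooks' actual argument is built to avoid exactly this: it uses the hypothesis $\lambda_0(M)<\lambda_0^{\mathrm{ess}}(M)$ (satisfied here since $0<1/4$, the essential spectrum of a finite-area non-compact hyperbolic surface starting at $1/4$), works with the $L^2$ ground state of $M$, and measures non-amenability through a relative Cheeger constant of the covering adapted to a fundamental-domain decomposition, so that the thin parts are controlled. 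Incidentally, a rigorous implementation of precisely the strategy you sketch is the Roblin--Tapie inequality that the paper states as Theorem~\ref{theo:RT}: there non-amenability enters through the combinatorial gap $\mu_0(G,S)$ (Kesten), and the passage from combinatorial to analytic data is carried out by energy estimates on transition zones of a Dirichlet domain --- this is the machinery that replaces your missing ``analytic Poincar\'e inequality'' step and deals correctly with the cusps.
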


This last result is based on the fact that the critical exponent $\delta(\Gamma)$ is related to $\lambda_0(\Gamma)$, the bottom of the spectrum of the Laplace operator on $\rquo{\H}{\Gamma}$. In fact, when $\delta(\Gamma)\geq1/2$, we have that $\lambda_0(\Gamma) = \delta(\Gamma)(1-\delta(\Gamma))$ (see for example~\cite{RT}).


\section{Counting problems on Veech surfaces} \label{sect:cpvs}

Let $X$ be a Veech surface, that is, $X$ is a translation surface whose Veech group $\PSL(X)$ is a (non-uniform) lattice. In particular, $\rquo{\H}{\PSL(X)}$ has a finite number of cusps. It is well known (since Veech~\cite{Ve1}) that, for Veech surfaces, cylinders correspond to the cusps of the Veech group and, in particular, the family of all cylinders can be written as the union of a finite number of $\SL(X)$-orbit of cylinders. That is, there are finitely many cylinders $A_1,\dots,A_n$ in $X$ such that
\[\A\coloneqq\{\text{all cylinders in } X\} = \SL(X)\cdot \{A_j\}_{j=1}^{n}.\]
In particular, any collection $\C\subset\A$ of cylinders is contained in a finite union of cusps, in the sense that it satisfies $\C \subset \SL(X)\cdot \mbf C$, for some \emph{finite} collection $\mbf C \subset\C$.

\subsection{Finitely saturated collections of cylinders} \label{sect:saturated}
Let $\Gamma$ be a 
subgroup of $\SL(X)$.
A 
collection $\C$ of cylinders in $X$ is said to be \emph{finitely saturated by $\Gamma$} (or \emph{$\Gamma$-finitely saturated}) if it can be expressed as a finite union of $\Gamma$-orbits of cylinders and $\Gamma$ contains every cusp. More precisely, $\C$ is finitely saturated by $\Gamma$ if $\C=\Gamma\cdot \mbf C$, for some finite collection $\mbf{C}\subset\C$ and $stab_{\SL(X)}(C) \subset \Gamma$ for every $C\in\C$. Equivalently, we can ask $stab_{\SL(X)}(C) \subset \Gamma$ only for $C\in\mbf{C}$.

Thus, as already said in different terms, the collection $\A$ of all cylinders in $X$ is $\SL(X)$-finitely saturated.

\begin{rema} In the definition of finitely saturated collections of cylinders, the \emph{finite} part is fundamental. Consider, for example, the group $\Gamma$ generated by all parabolics in $\SL(X)$. Then, when the Teichm\"uller curve defined by $X$ has positive genus\footnote{See~\cite{HL} for examples of Teichm\"uller curves with arbitrary large genus in a fixed stratum.}, $\A$ is saturated by $\Gamma$, but it is not $\Gamma$-finitely saturated.
\end{rema}

In general, any $\slr$-equivariant collection of cylinders (defined in the $\slr$-orbit of $X$) is $\SL(X)$-finitely saturated. In particular, configurations of cylinders, in the sense of Eskin--Masur--Zorich~\cite{EMZ}, define $\SL(X)$-finitely saturated collections of cylinders.
However, in this work, we have to deal with collections of cylinders which are finitely saturated by groups which are not lattices as $\SL(X)$ is. In fact, we have to deal with groups which are not even finitely generated.

\begin{rema}
If $\Gamma$ is a Fuchsian group such that a (non-empty) collection of cylinders $\C$ is finitely saturated by $\Gamma$, then, by definition, $stab_{\SL(X)}(C) \subset \Gamma$ for every $C\in \C$. But $\P stab_{\SL(X)}(C)$ is cyclic parabolic. Thus, $\Gamma$ contains parabolics and therefore $\delta(\Gamma)\geq 1/2$, with equality if and only if $\Gamma$ is elementary (and $\C$ is a finite collection of parallel cylinders).
\end{rema}

\subsection{Counting problem}

We are interested in counting cylinders in some particular collections. Let $\C$ be a collection of cylinders in $X$ and let $N_\C(X,L)$ be the number of cylinders in $C$ of length at most $L$.
We are able to study the asymptotic behavior in the case of finitely saturated collections.

In the case of $\A$, the collection of all cylinders in $X$, Veech proved the quadratic asymptotic behavior in~\cite{Ve1} and gave then an effective version in~\cite[Remark~1.12]{Ve2}.
In the case of collections of cylinders saturated by lattice groups, Veech's approach can be applied exactly the same. In fact, we have the following.

\begin{theo}[Veech] \label{theo:saturated Veech}
Let $X$ be a Veech surface and let $\C$ be a $\Gamma$-finitely saturated collection of cylinders on $X$ with $\Gamma$ being a lattice. Then
\[N_\C(X,L) = c(\C)L^2 + \sum_{j=1}^{k} c_j(\C)L^{2\delta_j} + O(L^{4/3}),\]
as $L\to\infty$, for some $c(\C),c_1(\C),\dots,c_k(\C)>0$, where $\{\delta_j(1-\delta_j)\}_{j=1}^{k}$ is the discrete spectrum of the Laplace operator on $\rquo{\H}{\Gamma}$ on $(0,1/4)$. In particular, $\delta_j\in(1/2,1)$, for $j=1,\dots,k$. Possibly $k=0$.
\end{theo}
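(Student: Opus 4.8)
The strategy is to run W.~Veech's argument \cite{Ve1,Ve2} essentially verbatim, the only new feature being that the lattice in play is the abstract group $\Gamma$ rather than the whole Veech group $\SL(X)$; what makes this legitimate is precisely the finite-saturation hypothesis, which guarantees that the relevant parabolic stabilisers are cusps of $\Gamma$.

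First I would reduce the count to a finite sum of orbit counts. Write $\C=\Gamma\cdot\mathbf{C}$ with $\mathbf{C}=\{A_1,\dots,A_n\}$ finite, and set $w_j=\hol(\gamma_{A_j})\in\R^2$, so that $V_\C(X)$ is the union of the finitely many linear orbits $\Gamma\cdot w_j$. Since $\C$ is $\Gamma$-finitely saturated, $stab_{\SL(X)}(A_j)\subset\Gamma$; this cyclic parabolic group fixes the holonomy direction of $w_j$, hence the point $p_j\in\partial\H$ it fixes is a genuine cusp of $\rquo{\H}{\Gamma}$ whose parabolic stabiliser $\Gamma_{p_j}$ is exactly the stabiliser of the vector $w_j$ in $\Gamma$. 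As $(\gamma g)^{-1}w_j=g^{-1}w_j$ for $\gamma\in\Gamma_{p_j}$, the quantity $|g^{-1}w_j|$ is well defined on $\lquo{\Gamma}{\Gamma_{p_j}}$, and
\[ N_\C(X,L)=\sum_{j=1}^{n}\#\{\,g\in\lquo{\Gamma}{\Gamma_{p_j}}:\;|g^{-1}w_j|\le L\,\}\;+\;(\text{inclusion--exclusion}), \]
where the correction accounts for coincidences of holonomy vectors among different orbits; these intersection sets are again finite unions of $\Gamma$-orbits of the same analytic type and so will only contribute terms of the shape already present.

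Next I would identify each orbit count with an Eisenstein series, exactly as Veech does. Choosing a scaling matrix $\sigma_j\in\slr$ with $\sigma_j\infty=p_j$ and normalising $w_j=h_j\,\sigma_j e_1$ with $h_j>0$, where $e_1=(1,0)$ is fixed by the unipotent group $\sigma_j^{-1}\Gamma_{p_j}\sigma_j$, a one-line computation in horocyclic coordinates gives $|g^{-1}w_j|^2=h_j^2\,\Im(\sigma_j^{-1}gi)^{-1}$. Hence the associated Dirichlet series is
\[ D_j(s)\coloneqq\sum_{g\in\lquo{\Gamma}{\Gamma_{p_j}}}|g^{-1}w_j|^{-2s}=h_j^{-2s}\sum_{g\in\lquo{\Gamma}{\Gamma_{p_j}}}\Im(\sigma_j^{-1}gi)^s=h_j^{-2s}\,E_{p_j}(i,s), \]
where $E_{p_j}(z,s)$ is the Eisenstein series attached to the cusp $p_j$ of $\rquo{\H}{\Gamma}$. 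The whole counting function is therefore governed by the finite positive combination $D(s)=\sum_{j}h_j^{-2s}E_{p_j}(i,s)$.

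It then remains to read off the poles and apply the effective tauberian theorem. By the meromorphic continuation of Eisenstein series, for $\Re s>1/2$ each $E_{p_j}(i,s)$ is holomorphic except for finitely many simple poles on $(1/2,1]$: one at $s=1$ with residue $\Vol(\rquo{\H}{\Gamma})^{-1}>0$, and one at each $\delta_j\in(1/2,1)$ solving $\delta_j(1-\delta_j)=\lambda_j$, as $\lambda_j$ ranges over the finitely many eigenvalues of the Laplacian on $\rquo{\H}{\Gamma}$ in $(0,1/4)$. Thus $D(s)$ inherits exactly these poles, all simple, with positive leading residue at $s=1$. Feeding $D(s)$ — a finite combination of Eisenstein series, with the polynomial vertical growth and analytic continuation the method requires — into Veech's effective tauberian theorem \cite[Remark~1.12]{Ve2} converts a simple pole at a real $s_0$ with residue $\rho$ into a term $\tfrac{\rho}{s_0}L^{2s_0}$ and leaves a remainder $O(L^{4/3})$ that is intrinsic to the tauberian step and independent of the spectrum. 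This yields the stated asymptotic, with $c(\C)=\Vol(\rquo{\H}{\Gamma})^{-1}\sum_j h_j^{-2}>0$, finitely many exponents $\delta_j\in(1/2,1)$, and $\{\delta_j(1-\delta_j)\}$ the discrete spectrum in $(0,1/4)$; the subleading coefficients $c_j(\C)=\delta_j^{-1}\operatorname{Res}_{s=\delta_j}D(s)$ are positive, this positivity being part of Veech's residue analysis. The only genuinely analytic input — the meromorphic continuation, the pole--eigenvalue dictionary, and the effective remainder $O(L^{4/3})$ — is Veech's and is quoted; I expect the main thing to verify by hand to be the content of the first paragraph, namely that finite saturation turns each $p_j$ into an honest cusp of $\Gamma$ with stabiliser $\Gamma_{p_j}$, together with the inclusion--exclusion bookkeeping, so that the orbit counts are literally Eisenstein counts for the lattice $\Gamma$.
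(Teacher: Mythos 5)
Your proposal is correct and takes essentially the same approach as the paper: the paper's own proof is just the observation that Veech's argument --- \cite{Ve1} for the main term and \cite[Remark~1.12]{Ve2} via \cite[Theorem~4]{Go} for the remainder --- uses nothing about the collection of cylinders beyond its being finitely saturated by a lattice, so it runs verbatim with $\Gamma$ in place of $\SL(X)$. What you write (orbit counts, identification of the parabolic stabilisers as cusps of $\Gamma$ via the saturation hypothesis, Eisenstein series, effective tauberian step) is precisely that argument, spelled out in more detail than the paper itself gives.
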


\begin{proof}
For $\C=\A$, the collection of all cylinders in $X$ (which is finitely saturated by $\Gamma=\SL(X)$), Veech proved in \cite{Ve1} the principal term $c(\C)L^2$. The remainder was observed in \cite[Remark~1.12]{Ve2}, by an application of \cite[Theorem~4]{Go}.
The proof relies only in the fact that $\A$ is finitely saturated by a lattice group, namely $\SL(X)$.
Thus, in the case of collections finitely saturated by a lattice group, the proof follows exactly the same.
\end{proof}

In the case of infinite covolume groups this method cannot be adapted properly. However, following ideas of Dal'Bo~\cite{Da}, we are able to prove the following.

\begin{theo} \label{theo:critical exponent}
Let $X$ be a Veech surface and $\C$, a $\Gamma$-finitely saturated collection of cylinders on $X$ with $\Gamma$ non-elementary. Let $\delta=\delta(\Gamma)$ be the critical exponent of $\Gamma$. In particular, $\delta>1/2$.
Then, \[N_\C(X,L) = O(L^{2\delta}),\] as $L\to\infty$.
\end{theo}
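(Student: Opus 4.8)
The plan is to reduce the statement to the orbital counting function of $\Gamma$ and then invoke Roblin's estimate (Theorem~\ref{theo:orbital function}). Since $\C$ is finitely saturated by $\Gamma$, I would write $\C = \Gamma\cdot\{C_1,\dots,C_m\}$ and set $v_i = \hol(\gamma_{C_i})$. Because the Kontsevich--Zorich cocycle acts on holonomy by the linear $\slr$-action, one has $V_\C(X) = \bigcup_{i=1}^m \Gamma v_i$, so that $N_\C(X,L) \le \sum_i \#(\Gamma v_i \cap B(L))$ and it suffices to bound $\#(\Gamma v\cap B(L))$ for a single nonzero $v = v_i$.

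Fix such a $v$. Conjugating $\Gamma$ (which changes neither $\delta(\Gamma)$ nor the asymptotics), I may assume the boundary point $\xi$ fixed by the stabilizer of $v$ is $\infty$, so that $v = r\,e_1$ with $r = |v| > 0$ and $stab_\Gamma(v)$ is an infinite cyclic parabolic group $\langle p\rangle$ with $p = \begin{psmallmatrix} 1 & t_0 \\ 0 & 1 \end{psmallmatrix}$; it is nontrivial precisely because finite saturation forces $stab_{\SL(X)}(C)\subset\Gamma$. For $g = \begin{psmallmatrix} a & b \\ c & d \end{psmallmatrix}\in\Gamma$ one computes $gv = r(a,c)$, so the orbit points of $\Gamma v$ correspond bijectively to cosets $g\langle p\rangle$, the pair $(a,c)$ depending only on the coset, and $gv\in B(L)\iff a^2+c^2\le (L/r)^2$.

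The heart of the matter, and the step I expect to be the main obstacle, is the comparison between the Euclidean norm $|gv|$ and the hyperbolic displacement $d_\H(i,gi)$. These are \emph{not} monotonically related — a single $g$ can contract $v$ while translating $i$ arbitrarily far — so one cannot directly dominate the count by $n_\Gamma$. This is exactly the obstruction that makes the lattice method of Theorem~\ref{theo:saturated Veech} inapplicable and the point where the critical exponent enters. The remedy, following the ideas of Dal'Bo, is to exploit the parabolic $\langle p\rangle$ to select a good representative in each coset. Fixing $a,c$, the representatives $g\,p^{k}$ replace $(b,d)$ by $(b,d)+kt_0(a,c)$; minimizing $b^2+d^2$ over $k\in\Z$ and using $ad-bc=1$ gives
\[
\min_{k}(b^2+d^2) \le \frac{1}{a^2+c^2} + \frac{t_0^2}{4}(a^2+c^2).
\]
Since $V_\C(X)\subset V(X)$ is discrete and avoids $0$, there is $\varepsilon>0$ with $a^2+c^2 = |gv|^2/r^2 \ge \varepsilon$ throughout the orbit; combined with $a^2+c^2\le (L/r)^2$ this yields $\cosh d_\H(i,gi) = \tfrac12(a^2+b^2+c^2+d^2)\le CL^2$ for the chosen representative, hence $d_\H(i,gi)\le 2\log L + O(1)$.

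Distinct cosets select distinct elements of $\Gamma$, all lying in the hyperbolic ball of radius $2\log L + O(1)$ about $i$. Therefore
\[
\#(\Gamma v\cap B(L)) \le n_\Gamma\bigl(2\log L + O(1)\bigr),
\]
and Roblin's estimate $n_\Gamma(R)=O(e^{\delta R})$ gives $\#(\Gamma v\cap B(L)) = O(e^{2\delta\log L}) = O(L^{2\delta})$. Summing over the finitely many $v_i$ completes the argument. The only genuinely delicate point is the representative-selection bound above; conceptually it expresses that $2\log|gv|$ is (minus) the Busemann cocycle of $g$ at $\xi$, and minimizing over $\langle p\rangle$ realizes the orbit point nearest the horocycle determined by $v$, which is what converts a horocyclic/directional count into the radial count governed by $\delta$.
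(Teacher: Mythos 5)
Your proof is correct and follows essentially the same route as the paper's: reduce to counting cosets of the cyclic parabolic stabilizer, choose in each coset a representative $g$ whose displacement satisfies $d_\H(i,gi)\leq 2\log L + O(1)$, and invoke Roblin's bound $n_\Gamma(R)=O(e^{\delta R})$. The only difference is cosmetic — the paper picks the representative with $\Re(gi)\in\left[0,1\right)$ and uses $|g^{-1}x|=\Im(gi)^{-1/2}$ together with the fact that $\Im(gi)$ is bounded above on the orbit, while you minimize $b^2+d^2$ over the coset via $ad-bc=1$ and use discreteness of $V(X)$ for the lower bound on $a^2+c^2$; these are equivalent computations of the same geometric fact.
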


\begin{proof}
Without loss of generality, we can assume that $\C = \Gamma\cdot C$, for some cylinder $C$ in $X$.
Let $p=\begin{psmallmatrix} 1 & 1 \\ 0 & 1 \end{psmallmatrix}$, $P=\langle p \rangle$ and $x=\begin{psmallmatrix} 0\\1 \end{psmallmatrix}$.
Up to conjugation, we can suppose that $\hol(\gamma_C)=x$ and $stab_{\SL(X)}(C)=P$. Note that $\delta$ is invariant by conjugation, so there is no loss of generality. Denote $N_\Gamma(L)\coloneqq N_{\Gamma\cdot C}(X,L)$.
The idea is to relate $N_\Gamma$ to $n_\Gamma$ in order to apply Theorem~\ref{theo:orbital function}.

It is clear that
\begin{align*}
N_\Gamma(L)
& = \#\{gx:\lvert gx \rvert \leq L,\, g\in\Gamma\} \\
& = \#\{gP\in\rquo{\Gamma}{P}:\lvert gx \rvert \leq L\} \\
& = \#\{Pg\in\lquo{\Gamma}{P}:\lvert g^{-1}x \rvert \leq L\}.
\end{align*}
A simple computation shows that $\lvert g^{-1}x \rvert = \Im(gi)^{-1/2}$. In addition, for each coset in $\lquo{\Gamma}{P}$, there is exactly one representative $g\in\Gamma$ such that $\Re(gi)\in \left[0,1\right)$. Thus,
\begin{align*}
N_\Gamma(L)
& = \#\{Pg\in\lquo{\Gamma}{P}:\lvert g^{-1}x \rvert \leq L\} \\
& = \#\{g\in\Gamma:\Re(gi)\in \left[0,1\right) ,\; \Im(gi)^{-1/2} \leq L\}.
\end{align*}

Moreover, there exists $c(\Gamma)>0$ such that if $g\in\Gamma$ satisfies $\Re(gi)\in \left[0,1\right)$, then $d_\H(i,gi)\leq -\ln \Im(gi) + c(\Gamma)$. In fact, let $g\in\Gamma$. Note first that $\Im(gi)$ is bounded above, since $P$ is a subgroup of $\Gamma$ (we have a cusp at infinity). 
In addition, we have that
\[d_\H(i,gi) = \acosh \left( 1 + \frac{\Re(gi)^2 + (1-\Im(gi))^2}{2\Im(gi)} \right)\]
and therefore, if $g\in\Gamma$ and $\Re(gi)\in \left[0,1\right)$, then
\[d_\H(i,gi) \leq \acosh \left( 1 + \frac{\tilde c(\Gamma)}{\Im(gi)} \right),\]
for some $\tilde c(\Gamma)>0$.
Once again, since $\Im(gi)$ is bounded above, we get that
\[d_\H(i,gi) \leq \ln \left(\frac{1}{\Im(gi)} \right) + c(\Gamma),\]
for some $c(\Gamma)>0$.

It follows that
\begin{align*}
N_\Gamma(L)
& = \#\{g\in\Gamma:\Re(gi)\in \left[0,1\right) ,\; \Im(gi)^{-1/2} \leq L\} \\
& \leq \#\{g\in\Gamma:d_\H(i,gi) \leq 2\ln L + c(\Gamma)\} \\
& = n_\Gamma(2\ln L + c(\Gamma)).
\end{align*}

Finally, by Theorem~\ref{theo:orbital function},
$n_{\Gamma}(r) = O(e^{\delta(\Gamma)r})$ and thus
\[
N_{\Gamma}(L)
\leq n_{\Gamma}(2\ln L + c(\Gamma)) \\
= O(e^{\delta(\Gamma)(2\ln L + c(\Gamma))}) \\
= O(L^{2\delta(\Gamma)}).
\qedhere
\]
\end{proof}

\section{Veech wind-tree billiards} \label{sect:vwtb}

In~\cite{Pa}, we proved asymptotic formulas for generic wind-tree models.
To prove such result, we had to split the associated collection of cylinders into two. The collection of \emph{good cylinders} and the collection of \emph{bad cylinders} (see \S\ref{sect:good bad cylinders}).
We proved then that good cylinders have quadratic asymptotic growth rate (and gave the associated coefficient in the generic case) and that bad cylinders have sub-quadratic asymptotic growth rate.

In this work we exhibit a quantitative version of these results in the case of Veech wind-tree billiards.

\subsection{Good cylinders}

Being a good cylinder is a $\slr$-invariant condition, then, in particular, for Veech wind-tree billiards $\Pi$, with Veech group $\PSL(\Pi)$ (see \S\ref{sect:vwtm}), the collection of good cylinders is $\SL(\Pi)$-finitely saturated (see \S\ref{sect:saturated}) and thus, as a corollary of Veech's theorem (Theorem~\ref{theo:saturated Veech}), we obtain the following. 

\begin{coro} \label{coro:good cylinders}
Let \,$\Pi$ be a Veech wind-tree billiard. Then, there exists $c(\Pi)>0$ and $\delta_{good}(\Pi)\in\left[1/2,1\right)$ such that \[ N_{good}(\Pi,L) = c(\Pi) \cdot \frac{\pi L^2}{\operatorname{Area}\left(\Pi/\Z^2\right)} + O(L^{2\delta_{good}(\Pi)})+ O(L^{4/3})\]
as $L\to\infty$, where $\delta=\delta_{good}(\Pi)$ is such that $\delta(1-\delta)$ is the second smallest eigenvalue of the Laplace operator on $\rquo{\H}{\PSL(\Pi)}$, $\delta(1-\delta)\in(0,1/4]$.
\end{coro}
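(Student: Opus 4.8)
The plan is to recognize $N_{good}(\Pi,L)=N_{good}(X,L)$, with $X=X(\Pi)$, as the counting function of a collection of cylinders that is finitely saturated by a \emph{lattice}, and then to invoke Veech's theorem (Theorem~\ref{theo:saturated Veech}) as a black box; everything else is spectral bookkeeping. First I would verify that the family of $F^{(h)}\oplus F^{(v)}$-good cylinders is $\SL(\Pi)$-finitely saturated. Being good means $\gamma_C\in\mathrm{Ann}(F_X)$ with $F=F^{(h)}\oplus F^{(v)}$; since $F$ is equivariant by Theorem~\ref{theo:positive Lyapunov exponent} and the Kontsevich--Zorich cocycle preserves the intersection form, for $A\in\SL(X)$ one has $\KZ(A,X)F_X=F_X$, so the annihilation condition is preserved by the $\SL(X)$-action. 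Thus the good cylinders form an $\SL(X)$-invariant subfamily of $\A$. As $\A$ is itself a finite union of $\SL(X)$-orbits, each such orbit is either entirely good or entirely not, so the good subfamily is the union of finitely many of them; the stabilizer requirement $stab_{\SL(X)}(C)\subset\SL(X)$ is automatic, and finite saturation follows.

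Second, since $\Pi$ is a Veech wind-tree billiard, $\Gamma=\SL(\Pi)=\SL(X)$ is a lattice by definition, so Theorem~\ref{theo:saturated Veech} applies with $\C$ the collection of good cylinders and yields
\[N_{good}(\Pi,L)=c(\C)L^2+\sum_{j=1}^{k}c_j(\C)L^{2\delta_j}+O(L^{4/3}),\]
where $\{\delta_j(1-\delta_j)\}_{j=1}^{k}$ is the discrete spectrum of the Laplacian on $\rquo{\H}{\PSL(\Pi)}$ contained in $(0,1/4)$ and each $\delta_j\in(1/2,1)$.

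Third comes the bookkeeping that collapses the finite sum into a single error term and reads off the exponent spectrally. I would set $\delta_{good}(\Pi)=\max_j\delta_j$ when $k\geq1$ and $\delta_{good}(\Pi)=1/2$ when $k=0$. Because $\delta\mapsto\delta(1-\delta)$ is strictly decreasing on $(1/2,1)$, the largest $\delta_j$ corresponds to the smallest element of the positive discrete spectrum, namely the second smallest Laplacian eigenvalue $\lambda_1$ (the smallest being $\lambda_0=0$, which produces the leading $L^2$ term); hence $\delta_{good}(1-\delta_{good})=\min(\lambda_1,1/4)\in(0,1/4]$, equal to $\lambda_1$ exactly when $\lambda_1\leq1/4$. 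All remaining terms with $\delta_j<\delta_{good}$ are $O(L^{2\delta_{good}})$ and collapse accordingly; in the degenerate case $k=0$ one has $L^{2\delta_{good}}=L=O(L^{4/3})$, so the announced form still holds with $\delta_{good}=1/2\in[1/2,1)$. Finally I would record the leading constant in the stated shape by setting $c(\Pi):=c(\C)\operatorname{Area}(\Pi/\Z^2)/\pi>0$; this merely reflects the Siegel--Veech interpretation of the leading term and matches the normalization of~\cite{Pa}.

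The deep analytic content---meromorphic continuation of the relevant (linear combinations of) Eisenstein series, the effective tauberian step of~\cite[Remark~1.12]{Ve2} via~\cite[Theorem~4]{Go}, and the dictionary between their poles and the Laplacian eigenvalues---is entirely internal to Theorem~\ref{theo:saturated Veech} and is used without reproof. The only points genuinely specific to this corollary are therefore the verification of finite saturation, which hinges on the equivariance of $F^{(h)}\oplus F^{(v)}$ and the invariance of the intersection form, and the correct handling of the boundary case $\lambda_1\geq1/4$, where $\delta_{good}=1/2$ and the corresponding error is absorbed into $O(L^{4/3})$; I expect this last bit of case analysis, rather than any hard estimate, to be the main (mild) obstacle.
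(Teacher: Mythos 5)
Your proposal is correct and follows essentially the same route as the paper: the paper's proof of this corollary is precisely the observation that being a good cylinder is an $\slr$-equivariant condition (via the equivariance of $F^{(h)}\oplus F^{(v)}$ and invariance of the intersection form), so the collection of good cylinders is $\SL(\Pi)$-finitely saturated and Theorem~\ref{theo:saturated Veech} applies. Your additional bookkeeping --- collapsing the finite spectral sum into $O(L^{2\delta_{good}})$ with $\delta_{good}=\max_j\delta_j$, handling the degenerate case $k=0$ where $\delta_{good}=1/2$ and the term is absorbed into $O(L^{4/3})$, and renormalizing the leading constant --- is exactly what the paper leaves implicit.
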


\subsection{Bad cylinders} \label{sect:bad cylinders}

In the case of bad cylinders, Veech's approach is no longer possible since collection of bad cylinders is not $\slr$-equivariant and, in particular, bad cylinders are not $\SL(\Pi)$-finitely saturated. However, it is finitely saturated by a subgroup $\Gamma_{bad}$ of $\SL(\Pi)$, so we can use the approach on Theorem~\ref{theo:critical exponent}.

\begin{rema}\label{rema:infinitely generated}
We shall see that $\Gamma_{bad}$ is quite intricate. It is a not normal subgroup of $\SL(\Pi)$ and it is an infinitely generated Fuchsian group of the first kind.
\end{rema}

By this means, we prove that bad cylinders have sub-quadratic asymptotic growth rate in an effective way.
More precisely, we prove the following.

\begin{theo} \label{theo:bad cylinders}
Let \,$\Pi$ be a Veech wind-tree billiard. Then, there exists $\delta_{bad}(\Pi)\in\left(1/2,1\right)$ such that \[ N_{bad}(\Pi,L) = O(L^{2\delta_{bad}(\Pi)})\]
as $L\to\infty$.
\end{theo}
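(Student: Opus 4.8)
The plan is to realize the $(F,f)$-bad cylinders as a collection finitely saturated by an explicit subgroup of $\SL(X)$, apply Theorem~\ref{theo:critical exponent} to bound $N_{f-bad}(X,L)$ by $O(L^{2\delta})$ for the critical exponent $\delta$ of that subgroup, and then invoke Brooks' Theorem~\ref{theo:Brooks} twice to prove $\delta<1$. Writing $X=X(\Pi)$ and $F=F^{(f)}$ for $f\in\{h,v\}$, I would set
\[
\Gamma_{bad}^{(f)}\coloneqq\{A\in\SL(X):\rho_F(A)f=\pm f\},
\]
the $\rho_F$-preimage of the stabilizer of the line $\R f$ in $\SL(F_X)$. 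The structural input is Remark~\ref{rema:cylinder projections}: a cylinder $C$ is $(F,f)$-bad exactly when $\pr_{F_X}\gamma_C=\pm f$. Since $F$ is equivariant, $\pr_{F_X}\gamma_{AC}=\rho_F(A)\pr_{F_X}\gamma_C$, so $\Gamma_{bad}^{(f)}$ is precisely the subgroup preserving the bad set. Starting from Veech's finite decomposition $\A=\SL(X)\cdot\{A_j\}_{j=1}^n$, I would pick in each $\SL(X)$-orbit meeting the bad set a representative $B_i$ with $\pr_{F_X}\gamma_{B_i}=f$; then $gB_i$ is $(F,f)$-bad iff $g\in\Gamma_{bad}^{(f)}$, so the bad cylinders form $\Gamma_{bad}^{(f)}\cdot\{B_i\}_i$, a finite union of $\Gamma_{bad}^{(f)}$-orbits. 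As any element of $stab_{\SL(X)}(B_i)$ fixes $\gamma_{B_i}$, hence $f$, it lies in $\Gamma_{bad}^{(f)}$, so this collection is genuinely $\Gamma_{bad}^{(f)}$-finitely saturated.

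To apply Theorem~\ref{theo:critical exponent} I must check that $\Gamma_{bad}^{(f)}$ is non-elementary. This follows from Theorem~\ref{theo:ker non-elementary}: since $F$ is a $2$-dimensional subbundle of $\ker\hol$ defined over $\Z$ (Theorem~\ref{theo:positive Lyapunov exponent}), its projective kernel $\pker\rho_F$ is a Fuchsian group of the first kind, hence non-elementary, and $\pker\rho_F\subset\Gamma_{bad}^{(f)}$ because $\rho_F(A)=\mathrm{id}$ forces $\rho_F(A)f=f$. Theorem~\ref{theo:critical exponent} then yields $N_{f-bad}(X,L)=O(L^{2\delta(\Gamma_{bad}^{(f)})})$ with $\delta(\Gamma_{bad}^{(f)})>1/2$.

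It remains to show $\delta(\Gamma_{bad}^{(f)})<1$, which I would obtain by comparing $\Gamma_{bad}^{(f)}$ with $\pker\rho_F$ through the sandwich $\pker\rho_F\trianglelefteq\Gamma_{bad}^{(f)}\subset\PSL(X)$. For the first application, the quotient $\Gamma_{bad}^{(f)}/\pker\rho_F\cong\rho_F(\Gamma_{bad}^{(f)})$ is the stabilizer of a line in $\SL(F_X)\cong\slr$, hence (after conjugation into the upper-triangular group) solvable and therefore amenable; Theorem~\ref{theo:Brooks}(1) gives $\delta(\Gamma_{bad}^{(f)})=\delta(\pker\rho_F)$. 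For the second, the quotient $\PSL(X)/\pker\rho_F\cong\rho_F(\PSL(X))$ is non-amenable: by Theorem~\ref{theo:positive Lyapunov exponent} the subbundle $F$ is strongly irreducible with a non-zero Lyapunov exponent, so its image in $\pslr$ can neither fix a point of $\H$ (else it would be relatively compact, forcing vanishing exponents) nor fix one or two boundary points (else $F$ would split), hence it is a non-elementary subgroup of $\pslr$ and contains a non-abelian free subgroup. Since $\PSL(X)$ is a lattice with $\delta(\PSL(X))=1$, Theorem~\ref{theo:Brooks}(2) gives $\delta(\pker\rho_F)<1$, so $\delta(\Gamma_{bad}^{(f)})=\delta(\pker\rho_F)<1$. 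Setting $\delta_{bad}(\Pi)=\max_{f\in\{h,v\}}\delta(\Gamma_{bad}^{(f)})\in(1/2,1)$ and summing $N_{bad}(\Pi,L)=N_{h-bad}(X,L)+N_{v-bad}(X,L)$ yields the claimed $O(L^{2\delta_{bad}(\Pi)})$.

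The main obstacle is verifying the hypothesis $\delta(\pker\rho_F)>1/2$ required by both parts of Theorem~\ref{theo:Brooks}. Non-elementarity of $\pker\rho_F$ alone does not force $\delta>1/2$, so I would argue that, as an infinite normal subgroup of the lattice $\PSL(X)$, it is of the first kind (Theorem~\ref{theo:ker non-elementary}), and combine this with the normal-subgroup bound $\delta(N)\ge\tfrac12\delta(\Gamma_0)$ for $N\trianglelefteq\Gamma_0$ to get $\delta(\pker\rho_F)\ge 1/2$; the strict inequality, together with the clean identification $\delta(\Gamma_{bad}^{(f)})=\delta(\pker\rho_F)$, is exactly what the amenable/non-amenable dichotomy of Brooks is designed to deliver. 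Making this threshold argument rigorous is the delicate point, precisely because the non-zero Lyapunov exponent means that parabolic elements of $\SL(X)$ may be sent to hyperbolic elements by $\rho_F$, so one cannot simply exhibit a parabolic inside $\pker\rho_F$.
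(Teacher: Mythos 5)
Your proposal is, in substance, the paper's own proof: you construct the same group $\Gamma_{bad}=\{g\in\SL(X):\rho_F(g)f=\pm f\}$, verify $\Gamma_{bad}$-finite saturation of the bad cylinders the same way (via Remark~\ref{rema:cylinder projections}, equivariance of $\pr_{F_X}$, and containment of the cusp stabilizers), apply Theorem~\ref{theo:critical exponent}, and then run the same two applications of Theorem~\ref{theo:Brooks}: the quotient $\rquo{\Gamma_{bad}}{\pker\rho_F}$ is amenable because it embeds in the stabilizer of $\pm f$, giving $\delta(\Gamma_{bad})=\delta(\pker\rho_F)$, while $\rquo{\PSL(X)}{\pker\rho_F}\cong\P\rho_F(\SL(X))$ is non-amenable because positive Lyapunov exponents and strong irreducibility of $F$ (Theorem~\ref{theo:positive Lyapunov exponent}) rule out elementarity, giving $\delta(\pker\rho_F)<1$. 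This is exactly Proposition~\ref{prop:Gamma bad} and its proof.

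The one step you leave open, however, is a genuine gap as written: both parts of Theorem~\ref{theo:Brooks} take $\delta(\Gamma)>1/2$ as a \emph{hypothesis}, so the ``amenable/non-amenable dichotomy'' cannot ``deliver'' that strict inequality, and the normal-subgroup bound $\delta(N)\geq\tfrac12\delta(\Gamma_0)$ leaves open precisely the case $\delta(\pker\rho_F)=1/2$, in which neither application of Brooks is justified and the chain $\delta(\Gamma_{bad})=\delta(\pker\rho_F)<1$ collapses. Your diagnosis is also correct that one cannot simply exhibit a parabolic in the kernel: a parabolic of $\SL(X)$ is sent by $\rho_F$ to a quasi-unipotent element of $\Sp(F_X,\Z)$ which may well be nontrivial. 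The clean way to close the gap is the result of Matsuzaki--Yabuki~\cite{MY} (present in the paper's bibliography): a non-trivial normal subgroup $N$ of a Fuchsian group $\Gamma_0$ of \emph{divergence type} satisfies $\delta(N)>\delta(\Gamma_0)/2$; since $\PSL(X)$ is a lattice (hence of divergence type, with $\delta=1$) and $\pker\rho_F$ is a non-trivial normal subgroup, this gives $\delta(\pker\rho_F)>1/2$. (Alternatively, one can recast Brooks' theorem in terms of $\lambda_0$: non-amenability gives $\lambda_0(\pker\rho_F)>0$, amenability transfers this to $\lambda_0(\Gamma_{bad})>0$, and since $\delta(\Gamma_{bad})>1/2$ is already known from the parabolic cusp stabilizers it contains, $\lambda_0(\Gamma_{bad})=\delta(\Gamma_{bad})(1-\delta(\Gamma_{bad}))>0$ forces $\delta(\Gamma_{bad})<1$.) For what it is worth, the paper's own proof is also terse at this point --- it invokes only that $\ker\rho_F$ is of the first kind, ``in particular non-elementary,'' via Theorem~\ref{theo:ker non-elementary}, which by itself does not bound $\delta$ away from $1/2$ --- so you identified the genuinely delicate point of the argument; but a complete proof must close it, e.g.\ by~\cite{MY}, rather than flag it.
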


\begin{proof}

Let $f=h,v$ and $F=F^{(f)}$. Henceforth, by bad cylinder we mean $(F,f)$-bad cylinder.
Recall that a cylinder $C$ in $X=X(\Pi)$ is a bad cylinder if and only if $\pr_F\gamma_C=\pm f$ (see Remark~\ref{rema:cylinder projections}).

Let $\mathcal{B}$ be the collection of all bad cylinders in $X$. Then, since the collection of all cylinders can be written as a finite union of $\SL(X)$-orbits of cylinders, then there is a finite collection of bad cylinders $\mbf{B}$ such that $\mathcal{B} \subset \SL(X)\cdot \mbf{B}$.

Now, given a bad cylinder $B$ in $X$, define \[\Gamma_{bad}(B)\coloneqq\{g\in\SL(X): g\cdot B \text{ is a bad cylinder}\},\] so that \[\B=\bigcup_{B\in\mbf{B}} \Gamma_{bad}(B)\cdot B.\]

Since $B$ is a bad cylinder if and only if $\pr_F\gamma_C=\pm f$, then $g\in\Gamma_{bad}(B)$ if and only if $\pr_F\gamma_{g\cdot B} = \pm f$.
But $\pr_F\gamma_{g\cdot B} = \pr_F \rho_{H^1}(g)\gamma_B = \rho_F(g) \pr_F \gamma_B = \rho_F(g)(\pm f)$, where $\rho_F$ denotes the representation of $\SL(X)$ on $\Sp(F_X,\Z)$ (see \S\ref{sect:representation} and \S\ref{sect:wind-tree subbundles}).
It follows then that \[\Gamma_{bad}(B)=\Gamma_{bad}\coloneqq\{g\in\SL(X): \rho_F(g) f = \pm f\},\] which is a group and does not depend on $B\in\B$. Thus, $\mathcal{B} = \Gamma_{bad} \cdot \mbf{B}$.
Moreover, if $B\in\mathcal{B}$ and $p\in stab_{\SL(X)}(B)$, then $p\cdot B = B$, which is a bad cylinder. Therefore, $p\in \Gamma_{bad}(B) = \Gamma_{bad}$ and $\mathcal{B}$ is finitely saturated by $\Gamma_{bad}$ (see \S\ref{sect:saturated}).

We can apply then Theorem~\ref{theo:critical exponent} to obtain $\delta_{bad}(\Pi)=\delta(\Gamma_{bad})$. To conclude, we have to prove that $\delta(\Gamma_{bad})<1$. In fact, we have the following result, whose proof is postponed to \S\ref{sect:proof Gamma bad}.

\begin{prop} \label{prop:Gamma bad}
The critical exponent of $\Gamma_{bad}$ is strictly less than one.
\end{prop}

Thus, by Proposition~\ref{prop:Gamma bad} and Theorem~\ref{theo:critical exponent}, $N_{bad}(\Pi,L) = O(L^{2\delta_{bad}(\Pi)})$ as $L\to\infty$, where $\delta_{bad}(\Pi) = \delta(\Gamma_{bad})\in\left(1/2,1\right)$. This proves Theorem~\ref{theo:bad cylinders}.
\end{proof}

To conclude, we have to prove now Proposition~\ref{prop:Gamma bad}.

\subsubsection{Proof of Proposition~\ref{prop:Gamma bad}} \label{sect:proof Gamma bad}

Consider the normal subgroup of $\SL(X)$ given by $\ker \rho_F$ and note that it is also a subgroup of $\Gamma_{bad}$.

Since the action on homology is via (symplectic) integer matrices, then
\[\rho_F(\Gamma_{bad}) \subset stab(\pm f)\coloneqq \{ \hat g \in\Sp(F_X,\Z) : \hat g f = \pm f \}.\]
Since $F_X$ is two-dimensional, $\Sp(F_X,\Z)\cong \slz$ and $stab(\pm f)\cong stab_\slz(\pm\begin{psmallmatrix} 0\\1 \end{psmallmatrix})$, which is virtually cyclic parabolic. Thus, the quotient group $\rquo{\Gamma_{bad}}{\ker \rho_F}\cong\rho_F(\Gamma_{bad})$ is amenable (as it is isomorphic to a subgroup of an amenable group).

In a slight abuse of notation we will refer in the following to (discrete) subgroups of $\slr$ as if they were Fuchsian groups (discrete subgroups of $\pslr$).

By Theorem~\ref{theo:ker non-elementary}, $\ker \rho_F$ is of the first kind and, in particular, non-elementary.
Thus, we can apply Theorem~\ref{theo:Brooks} to obtain that $\delta(\Gamma_{bad}) = \delta(\ker \rho_F)$.

Consider now the quotient group $\rquo{\SL(X)}{\ker \rho_F} \cong \rho_F(\SL(X))$. The aim is to prove that $\rho_F(\SL(X))$ is not amenable.
We first note that, since $F$ has positive Lyapunov exponents (Theorem~\ref{theo:positive Lyapunov exponent}), $\rho_F(\SL(X))$ has at least one hyperbolic element and then, a maximal cyclic hyperbolic subgroup $H$. Suppose $\rho_F(\SL(X))$ is elementary and, in particular, virtually $H$. But then, $F$ would admit an almost invariant splitting (see \S\ref{sect:subbundles}).
But $F$ is two dimensional and has no zero Lyapunov exponents, in particular, it is strongly irreducible and do not admit almost invariant splittings. Thus $\rho_F(\SL(X))$ is non-elementary and it contains a Schottky group as subgroup.

Since Schottky groups are free and, in particular, non-amenable, it follows that $\rho_F(\SL(X))$ is not amenable. That is, $\rquo{\SL(X)}{\ker \rho_F}$ is not amenable, and then, by Theorem~\ref{theo:Brooks}, we have that $\delta(\ker \rho_F) < \delta(\SL(X))$. Thus, we conclude that \[\delta(\Gamma_{bad})=\delta(\ker \rho_F) < \delta(\SL(X))=1.\pushQED{\qed}\qedhere\]

\begin{proof}[Proof of Remark~\ref{rema:infinitely generated}]
We have to show that $\Gamma_{bad}$ is an infinitely generated group of the first kind.
Since $\ker\rho_F$ is of the first kind and $\ker\rho_F\subset \Gamma_{bad}$, so is $\Gamma_{bad}$. Moreover, $\delta(\Gamma_{bad})<1$, so it cannot be a lattice and therefore, it has to be infinitely generated, since finitely generated groups of the first kind are always lattices.
\end{proof}

\section{Explicit estimates for the $(1/2,1/2)$ wind-tree model} \label{sect:estimates}

In the case of the wind-tree billiard with square obstacles of side length $1/2$, $\Pi=\Pi(1/2,1/2)$, the Veech group can be easily computed (see \S\ref{sect:example}). Indeed, $\SL(\Pi)=\langle u^2,{}^{t\!}u^2 \rangle$, where $u= \begin{psmallmatrix} 1 & 1 \\ 0 & 1 \end{psmallmatrix}$.
In particular, $\PSL(\Pi)$ is a congruence subgroup of level~$2$.

\subsection{Good cylinders} \label{sect:good estimates}
A result of Huxley~\cite{Hu} shows that congruence groups $\Gamma$ of low level satisfies the Selberg's~$1/4$ conjecture, that is, that the spectral gap of the Laplace operator on $\rquo{\H}{\Gamma}$ equals $1/4$. That means (see 
\S\ref{sect:spectral gap}) that we have $\delta_{good}(\Pi)=1/2$ in Corollary~\ref{coro:good cylinders}.

\subsection{Bad cylinders}
We have now to estimate $\delta_{bad}(\Pi)$ from Theorem~\ref{theo:bad cylinders}. For this, we use a version of Brook's theorem (Theorem~\ref{theo:Brooks}) by Roblin--Tapie~\cite{RT}, formulated in a much more general context, which we adapt to ours.

\begin{theo}[Roblin--Tapie] \label{theo:RT}
Let $\Gamma_0$ be a lattice and $\Gamma$ be a non-elementary normal subgroup of $\Gamma_0$ such that $\delta(\Gamma)>1/2$.
Let $\D$ be a Dirichlet domain for $\Gamma_0$ and $S_0$ the associated symmetric system of generators (see \S\ref{sect:transition zones}).
Consider $G=\rquo{\Gamma_0}{\Gamma}$ and $S=\rquo{S_0}{\Gamma}$ the corresponding systems of generators of $G$. Then,
\[\lambda_0(\Gamma) \geq \frac{\eta(\Gamma_0) E_\D \mu_0(G,S)}{\eta(\Gamma_0) + E_\D \mu_0(G,S)},\]
where $\eta(\Gamma_0)$ is the spectral gap associated to $\Gamma_0$ (see \S\ref{sect:spectral gap}), $E_\D$ is any lower bound for the energy on $\D$ (see \S\ref{sect:energy}) and $\mu_0(G,S)$ is the bottom of the combinatorial spectrum of $G$ associated to $S$ (see \S\ref{sect:combinatorial spectrum}), as defined below.
\end{theo}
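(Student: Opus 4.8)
The plan is to bound $\lambda_0(\Gamma)$, the bottom of the $L^2$-spectrum of the hyperbolic Laplacian on $\tilde M \coloneqq \rquo{\H}{\Gamma}$, from below through its variational characterization
\[\lambda_0(\Gamma) = \inf_{f} \frac{\int_{\tilde M}|\nabla f|^2}{\int_{\tilde M}|f|^2},\]
the infimum being over nonzero $f$ in the form domain, for which it suffices to take real $f\in C_c^\infty(\tilde M)$. Since $\Gamma$ is normal in $\Gamma_0$, the cover $\tilde M\to \rquo{\H}{\Gamma_0}$ is Galois with deck group $G=\rquo{\Gamma_0}{\Gamma}$, so the translates $g\D$, $g\in G$, of the Dirichlet domain $\D$ tile $\tilde M$, two tiles $g\D$ and $gs\D$ being adjacent exactly when $s\in S$. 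The idea is to split the mass of $f$ into a part oscillating \emph{within} the tiles and a part carried by the tile-averages, controlling the former by the spectral gap $\eta(\Gamma_0)$ and the latter by $E_\D\,\mu_0(G,S)$; the harmonic-sum form of the bound then comes simply from adding the two resulting estimates of $\|f\|^2$.

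For the within-tile part, set $c_g \coloneqq \frac{1}{\mathrm{vol}(\D)}\int_{g\D}f$ and decompose, tile by tile, $\int_{g\D}|f|^2 = \int_{g\D}|f-c_g|^2 + \mathrm{vol}(\D)\,c_g^2$. I would invoke the local Poincar\'e inequality attached to the spectral gap $\eta(\Gamma_0)$ (see \S\ref{sect:spectral gap}): on each copy of $\D$, a function of zero average satisfies $\int_{g\D}|f-c_g|^2 \le \eta(\Gamma_0)^{-1}\int_{g\D}|\nabla f|^2$. Summing over $g\in G$ yields
\[\sum_{g}\int_{g\D}|f-c_g|^2 \le \frac{1}{\eta(\Gamma_0)}\int_{\tilde M}|\nabla f|^2.\]

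For the tile-average part, introduce $\psi\in\ell^2(G)$ by $\psi(g)=\sqrt{\mathrm{vol}(\D)}\,c_g$, so that $\sum_g \mathrm{vol}(\D)\,c_g^2 = \|\psi\|_{\ell^2(G)}^2$. By the definition of $\mu_0(G,S)$ as the bottom of the combinatorial spectrum (see \S\ref{sect:combinatorial spectrum}), $\mu_0(G,S)\,\|\psi\|^2$ is at most the combinatorial Dirichlet form $\sum_{g}\sum_{s\in S}|\psi(gs)-\psi(g)|^2$. The crucial and most delicate step is the energy estimate (see \S\ref{sect:energy} and Appendix~\ref{sect:estimates energy}): the difference of averages across a shared face is controlled by the Dirichlet energy of $f$ in the transition zone between the two tiles, so that, with $E_\D$ the energy lower bound on $\D$, one gets $\sum_{g}\sum_{s\in S}|\psi(gs)-\psi(g)|^2 \le E_\D^{-1}\int_{\tilde M}|\nabla f|^2$. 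Combining these two inequalities gives $\sum_g \mathrm{vol}(\D)\,c_g^2 \le (E_\D\,\mu_0(G,S))^{-1}\int_{\tilde M}|\nabla f|^2$.

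Adding the within-tile and tile-average estimates gives $\int_{\tilde M}|f|^2 \le \big(\eta(\Gamma_0)^{-1}+(E_\D\,\mu_0(G,S))^{-1}\big)\int_{\tilde M}|\nabla f|^2$, and passing to the infimum over $f$ produces exactly the claimed lower bound $\lambda_0(\Gamma)\ge \frac{\eta(\Gamma_0)\,E_\D\,\mu_0(G,S)}{\eta(\Gamma_0)+E_\D\,\mu_0(G,S)}$. I expect the main obstacle to be the energy estimate: one must define the transition zones attached to the faces of $\D$ (matching them with $S_0$), verify that $f$ restricted to such a zone interpolates between the two tile-averages, and extract a genuinely positive, geometry-dependent capacity lower bound $E_\D$; the normalization constants (volume of $\D$, the number and geometry of the faces, the identification of $S_0$ with the faces) must be tracked carefully so as to be absorbed cleanly into $E_\D$. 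The hypotheses that $\Gamma$ is non-elementary and $\delta(\Gamma)>1/2$ guarantee $\lambda_0(\Gamma)=\delta(\Gamma)(1-\delta(\Gamma))<1/4$, placing the bound in the regime where it is meaningful.
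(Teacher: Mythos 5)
First, a framing remark: the paper does not prove this statement at all --- it is imported from Roblin--Tapie~\cite{RT} (``formulated in a much more general context, which we adapt to ours''), and the surrounding subsections merely supply the definitions of $\eta(\Gamma_0)$, $E_\D$ and $\mu_0(G,S)$. So your proposal can only be measured against the Roblin--Tapie discretization argument. Your overall skeleton does match it: tile $\rquo{\H}{\Gamma}$ by $G$-translates of $\D$, split $\|f\|^2$ into within-tile oscillation plus tile averages, control the averages by the combinatorial spectrum of the Cayley graph of $(G,S)$, and add the two bounds to obtain the harmonic-mean formula. That part, including the final arithmetic, is sound.

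However, two of your steps have genuine gaps. First, the within-tile inequality $\int_{g\D}|f-c_g|^2 \le \eta(\Gamma_0)^{-1}\int_{g\D}|\nabla f|^2$ is false as stated: the constant governing mean-zero functions on $\D$ with \emph{no} boundary condition is the Neumann spectral gap of $\D$, not the spectral gap of $\rquo{\H}{\Gamma_0}$. The restriction of $f$ to a single tile, pushed down to $\rquo{\H}{\Gamma_0}$, is not $\Gamma_0$-periodic (it jumps across the faces of $\D$), so the surface gap does not apply to it; by min--max, restriction embeds $H^1(\rquo{\H}{\Gamma_0})$ isometrically into $H^1(\D)$, so the Neumann gap of $\D$ is at most $\eta(\Gamma_0)$ and is generically strictly smaller (for the square fundamental domain of a flat torus it is smaller by a factor of $4$), and for the ideal quadrilateral $\D$ of \S\ref{sect:estimates} it is not controlled by the hypotheses at all. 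Second, your ``crucial and most delicate step'', the bound $\sum_{g}\sum_{s}|\psi(gs)-\psi(g)|^2\le E_\D^{-1}\int_{\rquo{\H}{\Gamma}}|\nabla f|^2$, is asserted rather than proved, and the heuristic you offer for it --- that the difference of tile averages is controlled by the energy of $f$ in the transition zone --- is wrong: take $f$ vanishing on the collars and on one tile, with a bump in the interior of the adjacent tile; the collar energy vanishes while the averages differ. The correct estimate must pass from the tile average to the transition-zone average (a Poincar\'e/volume term) and then across the collar (a capacity term), and it is precisely this coupling that the formula $E_\D=\frac{1}{2\operatorname{Area}(\D)}\cdot\frac{\eta\,\Vol(L,R)\,\Cap(L,R)}{(\sqrt{\eta\Vol(L,R)}+\sqrt{\Cap(L,R)})^2}$ of \S\ref{sect:energy} encodes. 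In other words, the spectral-gap input cannot be used as a stand-alone within-tile Poincar\'e constant, separate from $E_\D$; it has to be threaded through the transition-zone estimate, which is the actual content of Roblin--Tapie's proof and is the piece missing from yours.
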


\subsubsection{Critical exponent and spectrum of the Laplace operator} \label{sect:spectral gap}
Let $\Gamma$ be a non-elementary Fuchsian group with critical exponent $\delta(\Gamma)>1/2$.
Then, the critical exponent $\delta(\Gamma)$ is related to $\lambda_0(\Gamma)$, the bottom of the spectrum of the Laplace operator on $\rquo{\H}{\Gamma}$, by $\lambda_0(\Gamma) = \delta(\Gamma)(1-\delta(\Gamma))\in\left(0,1/4\right)$.

If moreover $\Gamma$ is finitely generated, then the bottom of the spectrum $\lambda_0(\Gamma)$ is an isolated eigenvalue. We consider then the \emph{spectral gap of the Laplace operator on $\rquo{\H}{\Gamma}$}, $\eta(\Gamma) \coloneqq \lambda_1(\Gamma) - \lambda_0(\Gamma)>0$, where $\lambda_1(\Gamma)$ is the second smallest eigenvalue of the Laplace operator on $\rquo{\H}{\Gamma}$.

\subsubsection{Dirichlet domains and transition zones} \label{sect:transition zones}
Let $\Gamma$ be a finitely generated Fuchsian group and consider a Dirichlet domain $\D\subset\H$ for the action of $\Gamma$. Its boundary $\partial\D$ is piecewise geodesic, with finitely many pieces. To $\D$, we can associate a finite symmetric system of generators $S$ of $\Gamma$. Each such generator $s\in S$ is associated to one geodesic piece of $\partial\D$. Namely, $\beta_s=\D\cap s\D$. And every geodesic piece of $\partial\D$ has an associated generator in this way. 


%

We say that $L,R>0$ are \emph{admisible (for $\D$)} if for each $s\in S$, there exists a geodesic segment $\alpha_s\subset\beta_s$ of length $L$ such that $\alpha_s = s\alpha_{s^{-1}}$ and such that $\alpha_s$ admits a tubular neighborhood of radius $R$ which are pairwise disjoint (see Appendix~\ref{sect:estimates energy} for more details). These tubular neighborhoods are \emph{transition zones} of length $L$ and radius $R$ (cf.~\cite[p.~72]{RT}).



\subsubsection{Energy on transition zones} \label{sect:energy}
Roblin--Tapie~\cite{RT} introduced the volume and capacity of transition zones (in a much more general context). In our context, for a transition zone of length $L$ and radius $R$, its \emph{area} is $\Vol(L,R) \coloneqq L\cdot \sinh(R)$ and its \emph{capacity} is $\Cap(L,R) \coloneqq L/\arctan(\sinh(R))$.
We say that $E_\D\in\R_+$ is a \emph{lower bound for the energy on $\D$} if there are admissible $L,R>0$ such that
\[ E_\D =\frac{1}{2 \operatorname{Area}(\D)}\cdot\frac{\eta(\Gamma) \cdot \Vol(L,R) \cdot \Cap(L,R)}{\left(\sqrt{\eta(\Gamma) \cdot \Vol(L,R)} + \sqrt{\Cap(L,R)} \right)^2}.\]

In Appendix~\ref{sect:estimates energy} we estimate $E_\D$ in the case of the Dirichlet domain of the Veech group of $\Pi$, $\D = \left\{ z \in \H: |z\pm 1/2| \geq 1/2,\, |\Re(z)| \leq 1 \right\}$, with associated system of generators $S_0=\{u^2,{}^{t\!}u^2\}$.

\subsubsection{Combinatorial spectrum} \label{sect:combinatorial spectrum}
Let $G$ be a finitely generated group and $S\subset G$ be a symmetric finite system of generators of $G$.

Let $\ell^2(G)$ be the space of square-summable sequences on $G$ with the inner product \[\langle h,h' \rangle \coloneqq \sum_{g\in G} h_g {h\mathrlap{'}}_g,\] for $h,h'\in\ell^2(G)$, and define $\Delta_S:\ell^2(G)\to\ell^2(G)$, the combinatorial Laplace operator associated to $S$ on $\ell^2(G)$, by \[(\Delta_S h)_g \coloneqq \sum_{s\in S} (h_g - h_{gs}).\]

Then, we define $\mu_0(G,S)$, \emph{the bottom of the combinatorial spectrum of $G$ associated to $S$} to be the bottom of the spectrum of $\Delta_S$, that is, \[\mu_0(G,S)\coloneqq \inf \left\{\frac{\langle \Delta_S h, h \rangle}{\langle h, h \rangle},\; h\in \ell^2(G)\right\}.\]

We estimate $\mu_0(G,S)$ in the case of $G<\pslz$ generated by $\{u,{}^{t\!}u^3\}$ in Appendix~\ref{sect:estimates combinatorial spectrum}.

\subsection*{Estimates for $\delta_{bad}(\Pi)$}

An application of Theorem~\ref{theo:RT} allows us to estimate $\delta_{bad}(\Pi)$ in the present case. More precisely, we have the following.

\begin{theo} \label{theo:estimates}
Let \,$\Pi$ be the Veech wind-tree billiard with square obstacles of side length $1/2$, and let $\delta=\delta_{bad}(\Pi)\in\left(1/2,1\right)$ be as in the conclusion of Theorem~\ref{theo:bad cylinders}. Then, \[\delta < 0.9885.\]
\end{theo}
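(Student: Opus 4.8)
The plan is to apply Theorem~\ref{theo:RT} directly to the concrete groups arising from $\Pi(1/2,1/2)$, combining the quantitative inputs assembled in the two appendices. Here $\Gamma_0 = \PSL(\Pi) = \PSL(X)$ is the level-two congruence subgroup generated by $u^2$ and ${}^{t\!}u^2$, and $\Gamma = \pker\rho_F$ is its normal subgroup. By the argument in \S\ref{sect:proof Gamma bad}, $\Gamma$ is non-elementary of the first kind, so $\delta(\Gamma)>1/2$ and the hypotheses of Theorem~\ref{theo:RT} hold; moreover $\delta(\Gamma_{bad}) = \delta(\pker\rho_F)$, so it suffices to bound $\delta(\Gamma)$. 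The quotient $G = \rquo{\Gamma_0}{\Gamma} \cong \rho_F(\SL(X))$ is, by the computation in \S\ref{sect:example}, the subgroup of $\pslz$ generated by $\{u,{}^{t\!}u^3\}$ (using $\rho_{E^{-+}}(u^2)=u$ and $\rho_{E^{-+}}({}^{t\!}u^2)={}^{t\!}u^3$, so that $F=F^{(v)}$ gives exactly this presentation). Thus the three quantities feeding Theorem~\ref{theo:RT} are each pinned to an explicit computation.

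First I would collect the spectral gap $\eta(\Gamma_0)$: since $\PSL(\Pi)$ is a low-level congruence group, Huxley's result (\S\ref{sect:good estimates}) gives that it satisfies Selberg's $1/4$ conjecture, so $\lambda_1(\Gamma_0) \geq 1/4$ and $\lambda_0(\Gamma_0)=0$ (as $\Gamma_0$ is a lattice), whence $\eta(\Gamma_0) = 1/4$. Next I would invoke Appendix~\ref{sect:estimates energy} to extract an admissible pair $L,R$ for the Dirichlet domain $\D = \{z\in\H : |z\pm 1/2|\geq 1/2,\ |\Re(z)|\leq 1\}$ with generators $S_0=\{u^2,{}^{t\!}u^2\}$, and read off the resulting lower bound $E_\D$ from the formula in \S\ref{sect:energy}, which is built from $\Vol(L,R)=L\sinh R$, $\Cap(L,R)=L/\arctan(\sinh R)$, $\eta(\Gamma_0)$, and $\operatorname{Area}(\D)$. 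Then I would invoke Appendix~\ref{sect:estimates combinatorial spectrum} for the numerical lower bound on $\mu_0(G,S)$ for $G=\langle u,{}^{t\!}u^3\rangle < \pslz$ with $S = \rquo{S_0}{\Gamma}$.

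With these three numbers in hand, Theorem~\ref{theo:RT} yields
\[
\lambda_0(\Gamma) \geq \frac{\eta(\Gamma_0)\, E_\D\, \mu_0(G,S)}{\eta(\Gamma_0) + E_\D\, \mu_0(G,S)} =: \lambda_*,
\]
a strictly positive explicit constant. Since $\delta=\delta(\Gamma)\in(1/2,1)$ satisfies $\lambda_0(\Gamma)=\delta(1-\delta)$ (see \S\ref{sect:spectral gap}), and $t\mapsto t(1-t)$ is strictly decreasing on $(1/2,1)$, the inequality $\delta(1-\delta)\geq \lambda_*$ inverts to an upper bound
\[
\delta \leq \frac{1}{2} + \frac{1}{2}\sqrt{1 - 4\lambda_*}.
\]
The final step is the arithmetic verification that plugging in the appendix values gives a right-hand side below $0.9885$. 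The main obstacle is not the logical structure, which is essentially a substitution, but the quality of the numerical inputs: because $\lambda_*$ is an awkward harmonic-mean combination, a small $\mu_0(G,S)$ or $E_\D$ forces $\lambda_*$ close to $0$ and hence $\delta$ close to $1$, so the bound $0.9885$ is tight against the crudeness of the estimates. I therefore expect the real work to lie in Appendices~\ref{sect:estimates energy} and~\ref{sect:estimates combinatorial spectrum}, in squeezing the energy lower bound and the combinatorial spectral gap hard enough that the monotone inversion clears the stated threshold; once those appendix bounds are fixed, assembling them into $\delta<0.9885$ is routine.
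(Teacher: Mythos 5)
Your proposal follows the paper's own proof essentially verbatim: it applies Theorem~\ref{theo:RT} to $\Gamma_0=\PSL(\Pi)$ and $\Gamma=\pker\rho_F$, feeds in the same three quantitative inputs ($\eta(\Gamma_0)=1/4$ via Huxley, $E_\D$ from Appendix~\ref{sect:estimates energy}, and $\mu_0(G,S)$ for $G\cong\langle u,{}^{t\!}u^3\rangle$ with $S$ the image of the Dirichlet-domain generators, from Appendix~\ref{sect:estimates combinatorial_spectrum}), and inverts $\lambda_0(\Gamma)=\delta(1-\delta)$ by monotonicity to clear the threshold $0.9885$. The only cosmetic difference is that the paper records the explicit intermediate numbers ($E_\D>0.02575$, $\mu_0(G,S)>0.4647$, $\lambda_0(\Gamma)>0.01141$), while you defer them to the appendices; the logical structure is identical. (Note the appendix label is \texttt{sect:estimates combinatorial spectrum}.)
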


\begin{proof}
Following \S\ref{sect:bad cylinders}, we have that $\delta=\delta_{bad}(\Pi)$ corresponds to the critical exponent of the group $\Gamma_{bad}$\footnote{Here, in a slight abuse of notation, we are referring to (discrete) subgroups of $\slr$ as if they were Fuchsian groups (discrete subgroups of $\pslr$).}. Moreover, $\delta(\Gamma_{bad}) = \delta(\ker \rho_F)$. Let then $\delta = \delta(\ker \rho_F)$. As we have already seen, $\delta(1-\delta) = \lambda_0(\ker \rho_F)$.

The idea is to apply Theorem~\ref{theo:RT} to $\Gamma_0=\PSL(\Pi)$ and $\Gamma = \pker \rho_F$. Thus, it is enough to estimate \[\frac{\eta(\Gamma_0) E_\D \mu_0(G,S)}{\eta(\Gamma_0) + E_\D \mu_0(G,S)}\] from below.

Note that the function $x/(1+x)$ is an increasing function in $\left(0,\infty\right)$ and therefore, the problem can be reduced to find lower bounds for $\eta(\Gamma_0)$, $E_\D$ and $\mu_0(G,S)$.
\begin{itemize}
\item $\Gamma_0=\langle u^2, {}^{t\!}u^2 \rangle$ is a level two congruence group and, as already seen in \S\ref{sect:good estimates}, its spectral gap is \[\eta(\Gamma_0) = 1/4.\]
\item We consider $\D = \left\{ z \in \H: |z\pm 1/2| \geq 1/2,\, |\Re(z)| \leq 1 \right\}$, the Dirichlet domain for $\Gamma_0$. We estimate $E_\D$ in Appendix~\ref{sect:estimates energy}. By Theorem~\ref{theo:energy}, we have that \[E_\D > 0.02575.\]
\item Recall that $\SL(\Pi)=\langle u^2, {}^{t\!}u^2 \rangle$ and that, $\rho_{F^{(h)}}(\SL(\Pi))$,  $\rho_{F^{(v)}}(\SL(\Pi))$ are conjugated to $\langle u, {}^{t\!}u^3 \rangle$ (see \S\ref{sect:example}).

Moreover since $F=F^{(h)}$ or $F^{(v)}$ is a $2$-dimensional symplectic equivariant subbundle defined over $\Z$, $\rho_F$ descends to a representation $\tilde \rho_F$ of $\PSL(X)$ on $\PSL(F_X,\Z)$ (see \S\ref{sect:representation}), where $X=X(\Pi)$. Furthermore, by definition, the kernel of this latter representation coincides with $\pker \rho_F$, the image of $\ker \rho_F$ in $\pslr$. Analogously, for the image of the representation we have $\tilde\rho_F(\PSL(X))=\P\rho_F(\SL(X))$.
In summary, we have
\begin{itemize}
\item $\Gamma_0=\PSL(\Pi)=\langle u^2, {}^{t\!}u^2 \rangle$,
\item $\Gamma = \pker\rho_F = \ker\tilde\rho_F$,
\item $\rquo{\Gamma_0}{\Gamma}=\rquo{\PSL(\Pi)}{\ker\tilde\rho_F} \cong \tilde \rho_F(\PSL(X))=\P\rho_F(\SL(X))$, and
\item $\rho_F(\SL(X)) \cong \langle u, {}^{t\!}u^3 \rangle \eqqcolon H_3$.
\end{itemize}

The combinatorial spectrum is invariant under isomorphisms of groups (with generators). But $\rquo{\Gamma_0}{\Gamma}$ is isomorphic to $\P\rho_F(\SL(X))$ which in turn is isomorphic to $\P H_3$. In addition, the system of generators associated to the Dirichlet domain $\D$ is $S_0=\{u^{\pm2}, {}^{t\!}u^{\pm2}\}$, and the corresponding image into $G=\P H_3$ is $S=\{u^{\pm1},{}^{t\!}u^{\pm3}\}$.

We estimate $\mu_0(G,S)$ in Appendix~\ref{sect:estimates combinatorial spectrum}. By Theorem~\ref{theo:combinatorial spectrum}, we have that \[\mu_0(G,S) > 0.4647.\]
\end{itemize}

Putting all together, we get that
\[\lambda_0(\Gamma) \geq \frac{\eta(\Gamma_0) E_\D \mu_0(G,S)}{\eta(\Gamma_0) + E_\D \mu_0(G,S)} > 0.01141,\]
and we conclude that
\[\delta(\Gamma) = \frac{1+\sqrt{1-4\lambda_0(\Gamma)}}{2} < 0.9885.\qedhere\]
\end{proof}

\appendix
\renewcommand{\thefigure}{\thesection.\arabic{figure}}
\renewcommand{\theHfigure}{\thesection.\thefigure}

\setcounter{figure}{0}
\section{Energy estimates}
\label{sect:estimates energy}

In this appendix we give lower bounds for the energy (see \S\ref{sect:energy2} for precise definition) on the Dirichlet domain $\D = \left\{ z \in \H: |z\pm 1/2| \geq 1/2,\, |\Re(z)| \leq 1 \right\}$ of the Fuchsian group $\Gamma=\langle u^2, {}^{t\!}u^2 \rangle$, where $u=\begin{psmallmatrix} 1 & 1 \\ 0 & 1 \end{psmallmatrix}$ and ${}^{t\!}u$, its transpose. More precisely, we prove the following.

\begin{theo} \label{theo:energy}
Let $\D = \left\{ z \in \H: |z\pm 1/2| \geq 1/2,\, |\Re(z)| \leq 1 \right\}$ be the Dirichlet domain of $\Gamma=\langle u^2, {}^{t\!}u^2 \rangle$. Then, there is a lower bound for the energy on $\D$ which satisfies \[E_\D > 0.02575.\]
\end{theo}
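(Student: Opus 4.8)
The plan is to produce a lower bound for the energy quantity
\[
E_\D =\frac{1}{2 \operatorname{Area}(\D)}\cdot\frac{\eta(\Gamma) \cdot \Vol(L,R) \cdot \Cap(L,R)}{\left(\sqrt{\eta(\Gamma) \cdot \Vol(L,R)} + \sqrt{\Cap(L,R)} \right)^2}
\]
by choosing one explicit admissible pair $(L,R)$ for the Dirichlet domain $\D$ and plugging in. Since $E_\D$ is \emph{defined} as a valid lower bound whenever $(L,R)$ are admissible, the whole task reduces to: (i) identify the geodesic sides $\beta_s$ of $\partial\D$ and the segments $\alpha_s\subset\beta_s$ forced by the admissibility condition $\alpha_s=s\alpha_{s^{-1}}$; (ii) determine how large the length $L$ of the $\alpha_s$ and the tube radius $R$ may be taken so that the $R$-tubular neighborhoods of the four segments stay pairwise disjoint inside $\H$; and (iii) evaluate the closed-form expression with $\eta(\Gamma)=1/4$, $\operatorname{Area}(\D)$, $\Vol(L,R)=L\sinh(R)$ and $\Cap(L,R)=L/\arctan(\sinh(R))$.

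First I would set up the geometry of $\D=\{z\in\H: |z\pm 1/2|\ge 1/2,\ |\Re(z)|\le 1\}$. Its boundary consists of the two vertical lines $\Re(z)=\pm 1$, paired by the parabolic generators $u^{\pm 2}=\begin{psmallmatrix}1&\pm2\\0&1\end{psmallmatrix}$, and the two semicircles $|z\pm 1/2|=1/2$, paired by ${}^{t\!}u^{\pm2}=\begin{psmallmatrix}1&0\\\pm2&1\end{psmallmatrix}$. These four sides are the $\beta_s$ for $s\in S_0=\{u^{\pm2},{}^{t\!}u^{\pm2}\}$. The domain has three cusps (at $\infty$ and at $0$, the latter shared by the two semicircular sides via the symmetry $z\mapsto -z$), and $\operatorname{Area}(\D)$ is a finite number I would compute once, via Gauss--Bonnet or directly by integrating $dx\,dy/y^2$; this is a routine constant. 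On each vertical side the pairing $u^2$ acts by a hyperbolic-distance-preserving vertical translation in the upper-half-plane metric, so choosing the $\alpha_s$ to be symmetric geodesic segments of fixed hyperbolic length $L$ centered at a convenient height is straightforward; on each semicircular side the pairing is a conjugate parabolic and the matching segment is its image.

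The key step, and the main obstacle, is step (ii): extracting a genuinely admissible pair $(L,R)$, i.e.\ certifying that the four tubes of radius $R$ around the segments $\alpha_s$ of length $L$ are pairwise disjoint. This is a hyperbolic-distance computation between pairs of geodesic segments sitting on the two vertical lines and the two tangent semicircles, and near the cusps at $0$ and $\infty$ the sides approach each other, so $L$ and $R$ cannot be taken too large. I would get explicit disjointness by computing the minimal hyperbolic distances between the relevant geodesic sides (the two verticals are at distance realized along horocycle-free geodesics; a vertical and a semicircle meet or nearly meet near the cusps) and then choosing $\alpha_s$ short enough and $R$ small enough to keep all tubes clear of the cusp regions and of each other. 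Because the expression $x\mapsto x/(1+x)$ underlying $E_\D$ is increasing, any admissible $(L,R)$ yields a \emph{valid} lower bound, so I am free to trade a suboptimal, easily-certified choice of $(L,R)$ against sharpness; I would pick round values and simply verify numerically that they clear the stated threshold $0.02575$. Finally, with $\eta(\Gamma)=1/4$ fixed and $\Vol,\Cap$ evaluated at the chosen $(L,R)$, substituting into the formula and bounding the resulting elementary expression (involving $\sinh$, $\arctan$, and the constant $\operatorname{Area}(\D)$) from below gives $E_\D>0.02575$, completing the proof.
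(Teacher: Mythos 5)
Your outline --- choose an admissible pair $(L,R)$, evaluate the closed-form expression for $E_\D$ with $\operatorname{Area}(\D)=2\pi$, and check that it clears the threshold --- is the same skeleton as the paper's proof, but both pieces of real content are deferred in your plan, and neither, as you describe them, would deliver the stated constant. On admissibility, the paper does not estimate pairwise distances between tubes. It exploits the order-two elliptic symmetry $g=\begin{psmallmatrix}0&-1\\1&0\end{psmallmatrix}$ of $\D$, which cuts $\D$ into the four isometric ideal triangles $T(-1,i,0)$, $T(0,i,1)$, $T(1,i,\infty)$, $T(\infty,i,-1)$; demanding that each transition zone lie in its own triangle makes pairwise disjointness automatic, and the explicit formula $b_{u^2}(1+hi,\rho)=1-h\tanh(\rho)+ih\sech(\rho)$ turns this containment into a closed-form region of pairs (Proposition~\ref{prop:admissible}: $2e^{L}\tanh^{3}(R)\leq 1$), over which $E_\D$ is then \emph{maximized} numerically, with optimum $L^{*}\approx 3.8903$, $R^{*}\approx 0.2205$, $E^{*}\approx 0.0257532$. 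Your step (ii), which you yourself call the main obstacle, is exactly this step, and it is left as a plan; without a certified pair there is no proof.

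More seriously, step (iii) as you plan it would fail numerically, for compounding reasons. First, you take $\eta(\Gamma)=1/4$, while the appendix's proof takes $\eta(\Gamma)=1/2$ from Huxley's theorem (the paper is internally inconsistent here --- \S\ref{sect:good estimates} uses $1/4$ --- but the constant $0.02575$ is calibrated to $1/2$: it is precisely the truncated optimum $E^{*}$ above). Since $ab/(\sqrt{a}+\sqrt{b})^{2}<a$, one always has $E_\D<\eta(\Gamma)\,L\sinh(R)/(4\pi)$, and for this $\D$ no admissible pair can have $L\sinh(R)$ large: the zone around a vertical segment is well defined only if $h_0\geq\tanh(R)$, and disjointness of the two vertical zones forces $h_1\leq\coth(R)$, so $L\leq-2\ln\tanh(R)$ and hence $L\sinh(R)\leq\max_{R}\bigl(-2\ln\tanh(R)\bigr)\sinh(R)\approx 0.805$; with $\eta=1/4$ this gives $E_\D<0.805/(16\pi)<0.017$, so the threshold is unreachable no matter how cleverly you choose $(L,R)$. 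Second, even granting $\eta=1/2$, there is no slack to trade for ``round values'': the margin at the paper's optimum is about $3\times10^{-6}$, so only optimizing over the full stated region clears the bound. Third, the honest distance computations you propose lead to the condition $2\tanh(R)\leq h_0<h_1\leq\coth(R)$, i.e.\ the region $2e^{L}\tanh^{2}(R)\leq 1$ (note $|1-h\tanh\rho+ih\sech\rho|^{2}=1-2h\tanh\rho+h^{2}$, so the containment/disjointness condition is $h\geq2\tanh\rho$, not the $2\tanh^{2}\rho$ asserted in the appendix, which is what produces the $\tanh^3$ exponent there); over this smaller region the supremum of $E_\D$ is only about $0.016$ even with $\eta=1/2$. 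So your route, carried out correctly, cannot reproduce the constant $0.02575$; it would instead expose how tightly that constant is tied to the appendix's particular admissibility region and its value of $\eta$.
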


In the following we recall the definition of the involved objects (see~\cite{RT} for a much more general and detailed discussion).

\subsection{Dirichlet domains and transition zones} \label{sect:transition zones2}
Let $\Gamma$ be a finitely generated Fuchsian group and consider a Dirichlet domain $\D\subset\H$ for the action of $\Gamma$. Its boundary $\partial\D$ is piecewise geodesic, with finitely many pieces. To $\D$, we can associate a finite symmetric system of generators $S$ of $\Gamma$. To each such generator $s\in S$ we can associate one geodesic piece of $\partial\D$. Namely, $\beta_s=\D\cap s\D$. And every such piece has an associated generator in this way. Moreover, it is clear from the definition that $\beta_s = s\beta_{s^{-1}}$. In Figure~\ref{figu:Dirichlet domain2}, we show the case of the elementary group $\langle u \rangle$.

\begin{figure}[ht]
\centering\includegraphics[width=.9\textwidth,height=.25\textheight,keepaspectratio]{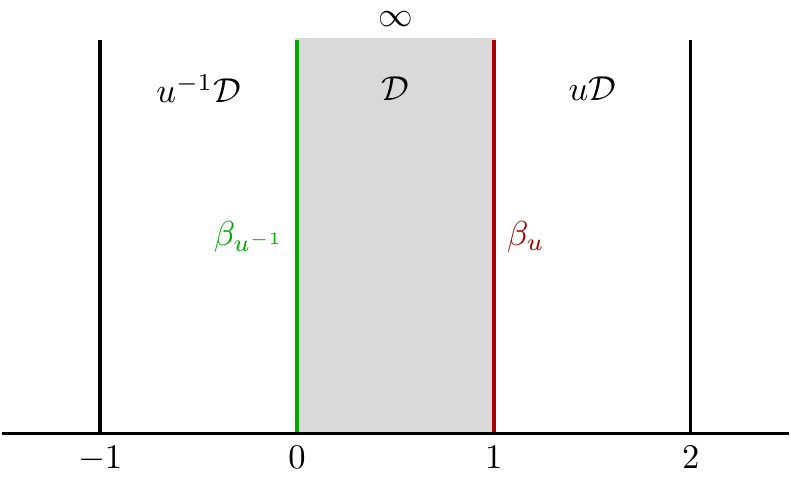}
\caption{Dirichlet domain for the elementary (cyclic parabolic) group $\langle u \rangle$, $\D=\left\{0\leq\Re z\leq 1\right\}$. The associated symmetric systems of generators is $S=\{u,u^{-1}\}$ and the corresponding geodesic boundaries $\beta_u=\left\{\Re z= 1\right\}$, $\beta_{u^{-1}}=\left\{\Re z= 0\right\}$.}
\label{figu:Dirichlet domain2}
\end{figure}

Let $z\in\mathring{\beta_s}$, for some $s\in S$, and let $\rho>0$ sufficiently small such that there is a point $b_s(z,\rho)\in\D$ satisfying $d_\H(b_s(z,\rho),\beta_s) = d_\H(b_s(z,\rho),z)= \rho$. In particular, such point $b_s(z,\rho)$ is unique.
See Figure~\ref{figu:b2} for an example of $b_s(z,\rho)$, in the case of $\langle u \rangle$, for $s=u^{-1}$.

\begin{figure}[ht]
\centering\includegraphics[width=.9\textwidth,height=.25\textheight,keepaspectratio]{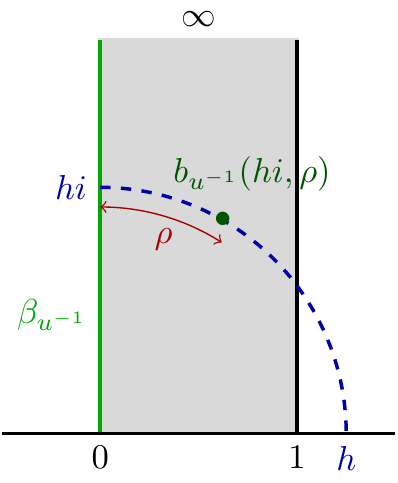}
\caption{The point $b_s(z,\rho)$. It corresponds to the point in $\D$ which lie on the geodesic passing through $hi$ perpendicularly to $\beta_{u^{-1}}$, in the case of the elementary group $\langle u \rangle$, for $s=u^{-1}$, $z=hi$}
\label{figu:b2}
\end{figure}

We say that $L,R>0$ are \emph{admisible (for $\D$)} if for each $s\in S$, there exists a geodesic segment $\alpha_s\subset\beta_s$ of length $L$ such that $\alpha_s = s\alpha_{s^{-1}}$, $b_s(z,R)$ is well defined and the sets
\[A_s\coloneqq \{b_s(z,\rho)\in\D :\; z\in\alpha_s,\; 0\leq \rho < R\}\] are pairwise disjoint. (see Figure~\ref{figu:transition zones2}). We call these sets, \emph{transition zones} of length $L$ and radius $R$ (cf.~\cite[p.~72]{RT}).

\begin{figure}[ht]
\centering\includegraphics[width=.9\textwidth,height=.25\textheight,keepaspectratio]{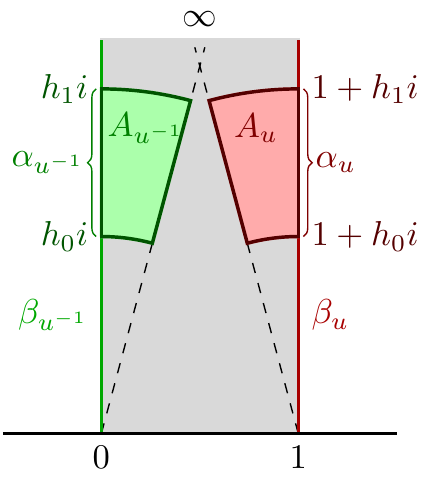}
\caption{Transition zones, in the case of the elementary group $\langle u \rangle$.}
\label{figu:transition zones2}
\end{figure}

\subsection{Energy on transition zones} \label{sect:energy2}
Roblin--Tapie~\cite{RT} introduced the volume and capacity of transition zones (in a much more general context). In our context, for a transition zone of length $L$ and radius $R$, its \emph{area} is $\Vol(L,R) \coloneqq L\cdot \sinh(R)$ and its \emph{capacity} is $\Cap(L,R) \coloneqq L/\arctan(\sinh(R))$.

We say that $E_\D\in\R_+$ is a \emph{lower bound for the energy on $\D$} if there are admissible $L,R>0$ such that
\[ E_\D =\frac{1}{2 \operatorname{Area}(\D)}\cdot\frac{\eta(\Gamma) \cdot \Vol(L,R) \cdot \Cap(L,R)}{\left(\sqrt{\eta(\Gamma) \cdot \Vol(L,R)} + \sqrt{\Cap(L,R)} \right)^2},\]
where $\eta(\Gamma)$ is the spectral gap of the Laplace operator on $\rquo{\H}{\Gamma}$, which is well defined and positive, since $\Gamma$ is finitely generated.

We can now start the discussion in our particular case, that is, the Dirichlet domain of $\Gamma=\langle u^2, {}^{t\!}u^2 \rangle$, $\D = \left\{ z \in \H: |z\pm 1/2| \geq 1/2,\, |\Re(z)| \leq 1 \right\}$.

\subsection{Proof of Theorem~\ref{theo:energy}}

The following result, whose proof is postponed to \S\ref{sect:admissible}, provides a sufficient condition for $L,R>0$ to be admissible (see \S\ref{sect:transition zones2}).

\begin{prop} \label{prop:admissible}
Let $L,R>0$. If $2e^L \tanh^3(R) \leq 1$, then $L,R$ are admissible.
\end{prop}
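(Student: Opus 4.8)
The goal is to prove Proposition~\ref{prop:admissible}: if $2e^L\tanh^3(R)\leq 1$, then $L,R$ are admissible for the Dirichlet domain $\D=\{z\in\H: |z\pm1/2|\geq 1/2,\ |\Re(z)|\leq1\}$ of $\Gamma=\langle u^2,{}^{t\!}u^2\rangle$, with system of generators $S_0=\{u^{\pm2},{}^{t\!}u^{\pm2}\}$.

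\medskip

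The plan is to verify directly the two requirements in the definition of admissibility (see \S\ref{sect:transition zones2}): first, that for each generator $s\in S_0$ one can choose a geodesic segment $\alpha_s\subset\beta_s$ of length $L$ satisfying the symmetry $\alpha_s=s\alpha_{s^{-1}}$ and such that $b_s(z,R)$ is well defined along $\alpha_s$; and second, that the resulting transition zones $A_s$ are pairwise disjoint. First I would identify the four geodesic boundary pieces $\beta_s$ explicitly. The generator $u^2$ contributes the vertical side $\{\Re z=1\}$ (so $\beta_{u^2}$ and $\beta_{u^{-2}}=\{\Re z=-1\}$ are the two vertical geodesics), while ${}^{t\!}u^2$, being the transpose (lower-triangular) parabolic fixing $0$, contributes the two boundary arcs $|z\pm1/2|=1/2$ of Euclidean semicircles resting on the real axis. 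Exploiting the reflection symmetries $z\mapsto -\bar z$ of $\D$, the four zones split into two symmetric pairs (the two vertical sides, and the two semicircular arcs), so it suffices to control one representative of each pair and argue disjointness between the two pairs and within each pair.

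\medskip

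For the vertical sides the geometry is the one illustrated in Figures~\ref{figu:b2} and~\ref{figu:transition zones2} for $\langle u\rangle$: centering $\alpha_{u^2}$ symmetrically about the lowest point of $\beta_{u^2}$ (at height determined by the tangency of $\D$'s semicircular arcs), the transition zone $A_{u^2}$ is the set of points at signed hyperbolic distance less than $R$ from $\alpha_{u^2}$, pushed into $\D$. The key quantitative input is that a tubular neighborhood of hyperbolic radius $R$ around a geodesic has Euclidean ``width'' governed by $\tanh R$ and $\sinh R$; the appearance of $e^L$ reflects how far the endpoints of a length-$L$ geodesic segment spread apart. The main obstacle, and the step I expect to require the real work, is proving pairwise disjointness of the four $A_s$ — in particular that the tube around a vertical side does not collide with the tube around a semicircular arc near the two singular points where these boundary pieces nearly meet. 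I would reduce this to an explicit inequality comparing the widths of the tubes (controlled by $\sinh R$, $\tanh R$) against the separations between the $\beta_s$'s, and show that the hypothesis $2e^L\tanh^3(R)\leq1$ forces $R$ small enough (and $\alpha_s$ short enough) that every tube stays strictly inside its ``half'' of $\D$, so no two overlap. The factor $\tanh^3 R$ and the coefficient $2$ should emerge precisely from this worst-case collision estimate near the cusps/singular boundary points.

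\medskip

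Concretely, I would (i) parametrize each $\beta_s$ and write down the map $z\mapsto b_s(z,\rho)$ as the endpoint of the unit-speed perpendicular geodesic from $z$ into $\D$, checking it is well defined for all $0\leq\rho<R$ under the stated bound; (ii) compute, for a point on $\beta_s$, the hyperbolic half-width of the admissible tube and translate it into an explicit region; and (iii) verify the disjointness inequality between the four regions, where the binding constraint comes from the two semicircular arcs (whose curvature makes their tubes fatten toward the real axis). The symmetry of $\D$ cuts the casework essentially in half. I expect the computation to be elementary hyperbolic trigonometry once the correct extremal configuration is identified; the creative step is recognizing that configuration and seeing that $2e^L\tanh^3(R)\leq1$ is exactly what guarantees separation there.
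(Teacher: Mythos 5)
Your proposal is a strategy memo, not a proof: every load-bearing step is deferred. You never choose the segments $\alpha_s$, never compute the tube boundaries, and never derive the inequality; instead you assert that ``$2e^L\tanh^3(R)\leq1$ is exactly what guarantees separation'' at an extremal configuration you do not identify. That assertion \emph{is} the entire content of the proposition, so nothing has been proved. The deferral is not harmless, either: if one actually performs the computation, the outer boundary point of the tube over $1+hi$ is $b_{u^2}(1+hi,R)=1-h\tanh(R)+ih\sech(R)$, whose distance to the origin satisfies $|b_{u^2}(1+hi,R)|^2=1-2h\tanh(R)+h^2$; the constraint that the tube stay on one side of the unit circle is therefore $h\geq 2\tanh(R)$, which combined with the upper constraint $h\leq\coth(R)$ yields the condition $2e^L\tanh^{2}(R)\leq 1$ rather than the exponent $3$ appearing in the statement (the equivalence ``$2\tanh^2(R)\leq h\leq\coth(R)$'' recorded in the paper's proof appears to contain a slip at exactly this point). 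So the constant you promise will ``emerge'' from the worst case is not what the computation actually gives; a proof that never does the computation cannot be credited with its outcome.

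Second, your reduction misses the one idea that makes the paper's argument short. You invoke only the reflection $z\mapsto-\bar z$, which pairs the two vertical sides with each other and the two circular arcs with each other, so the vertical-versus-arc interaction near the ideal vertices $\pm1$ --- exactly the case you single out as the hard one --- would still have to be handled by a bare-hands estimate. The paper instead uses the order-two elliptic isometry $g=\begin{psmallmatrix}0&-1\\1&0\end{psmallmatrix}$, $z\mapsto-1/z$, which preserves $\D$, exchanges the vertical sides $\gamma(\pm1,\infty)$ with the arcs $\gamma(\pm1,0)$, and cuts $\D$ into the four isometric ideal triangles $T(-1,i,0)$, $T(0,i,1)$, $T(1,i,\infty)$, $T(\infty,i,-1)$. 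Choosing the $\alpha_s$ to be permuted by $g$ (compatibly with the requirement $\alpha_s=s\alpha_{s^{-1}}$), pairwise disjointness of all four transition zones reduces to a \emph{single containment}, namely $A_{u^2}\subset T(1,i,\infty)$, i.e.\ a two-sided bound on $h$, and the admissibility condition then follows from $L=\log(h_1/h_0)$. In particular, your heuristic that the binding constraint comes from the arcs because their tubes ``fatten toward the real axis'' is an artifact of the Euclidean picture: under $g$ those tubes are isometric copies of the tubes over the vertical sides, so there is no separate worst case there at all.
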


We want now to estimate $E_\D$ (see \S\ref{sect:energy2}).

We first note that $\Gamma=\langle u^2, {}^{t\!}u^2 \rangle$ is a congruence group of level two and therefore, by a result of Huxley~\cite{Hu}, we have that $\eta(\Gamma)=1/2$.
Moreover, the Dirichlet domain $\D$ is an ideal quadrilateral, with vertices $1$, $-1$, $0$ and $\infty$ (see Figure~\ref{figu:ideal quadrilateral}). In particular, $\operatorname{Area}(\D) = 2\pi$.

\begin{figure}[ht]
\centering\includegraphics[width=.9\textwidth,height=.25\textheight,keepaspectratio]{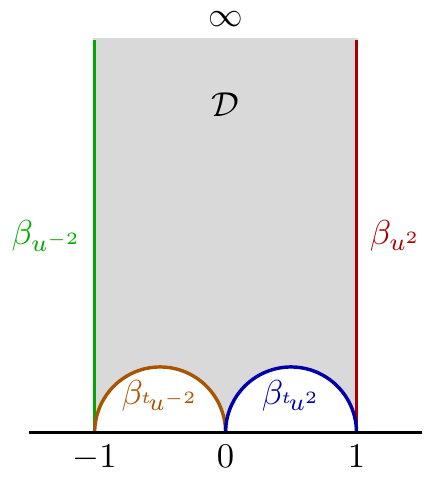}
\caption{Dirichlet domain for $\Gamma=\langle u^2, {}^{t\!}u^2 \rangle$. 
The associated symmetric systems of generators is $S=\{u^2, u^{-2}, {}^{t\!}u^2, {}^{t\!}u^{-2}\}$ and the corresponding geodesic boundaries are $\beta_{u^{\pm2}}=\left\{\Re z= \pm1\right\}$, $\beta_{{}^{t\!}u^{\pm2}}=\left\{|z\pm1/2|= 1/2\right\}$.}
\label{figu:ideal quadrilateral}
\end{figure}

By Proposition~\ref{prop:admissible}, $L,R>0$ are admissible if $2e^L \tanh^3(R) \leq 1$.
It suffices then to find the largest possible lower bound for the energy in this region. That is, we want to find $E^*=\max\{ E_\D(L,R):\,2 e^L \tanh^3 R \leq 1\}$. This can be done numerically: we get $L^*\approx 3.8903$, $R^*\approx 0.2205$ and \[E^* = E_\D(L^*,R^*) \approx 0.0257532 > 0.02575.\pushQED{\qed}\qedhere\]

\subsection{Proof of Proposition~\ref{prop:admissible}} \label{sect:admissible}
In this section we prove Proposition~\ref{prop:admissible}, thus providing a sufficient condition for $L,R>0$ to be admissible.

For $a,b\in\overline\R$, let $\gamma(a,b)$ denote the (bi-infinite) geodesic in $\H$ which goes from $a$ to $b$.
And for $x,y,z\in\overline\H$, let $T(x,y,z)$ denote the geodesic triangle with vertices $x,y,z$.
Thus, the Dirichlet domain $\D = \left\{ z \in \H: |z\pm1/2| \geq 1/2,\, |\Re(z)| \leq 1 \right\}$ coincides with $T(-1,1,\infty)\cup T(-1,1,0)$ and $\partial\D=\gamma(-1,0)\cup\gamma(0,1)\cup\gamma(1,\infty)\cup\gamma(\infty,-1)$ (see Figure~\ref{figu:ideal quadrilateral}).
Note that the symmetric system of generators associated to $\D$ is $S=\{u^{\pm2}, {}^{t\!}u^{\pm2}\}$ and, following the notation on \S\ref{sect:transition zones2}, we have
\begin{align*}
\beta_{u^{\pm2}} & =\{z\in \H:\;\Re(z) = \pm1\} = \gamma(\pm1,\infty), \\
\beta_{{}^{t\!}u^{\pm2}} & =\left\{|z\pm1/2|= 1/2\right\} = \gamma(\pm1,0).
\end{align*}
It follows that, in particular, any geodesic segment $\alpha_{u^{\pm2}}\subset\beta_{u^{\pm2}}$ (see \S\ref{sect:transition zones2}) is of the form $\{z\in \H:\;\Re(z) = \pm1,\; h_0 < \Im(z) < h_1\}$, for some $h_1>h_0>0$, with the same $h_1$ and $h_0$ for both $\alpha_{u^2}$ and $\alpha_{u^{-2}}$ since $\alpha_{u^2}=u^2\alpha_{u^{-2}}$. In such case, the length of the geodesic segment $\alpha_{u^{\pm2}}$ is equals to $L=log(h_1/h_0)$.

For simplicity, we shall consider a ``symmetric'' partition of $\D$ as in Figure~\ref{figu:partition}, given by a homography $g$, defined by the elliptic element $g=\begin{psmallmatrix} 0 & -1 \\ 1 & 0 \end{psmallmatrix}$, which is an isometry of order~$2$ fixing $i$ and such that permutes $-1$ with $1$ and $0$ with $\infty$. In particular, divides $\D$ in four isometric triangular regions. Namely, $T(-1,i,0)$, $T(0,i,1)$, $T(1,i,\infty)$ and $T(\infty,i,-1)$. Moreover, it is clear that
\begin{align*}
g: \quad
T(-1,i,0) & \leftrightarrow T(1,i,\infty), \\
T(\infty,i,-1) & \leftrightarrow T(0,i,1).
\end{align*}

\begin{figure}[ht]
\centering\includegraphics[width=.9\textwidth,height=.25\textheight,keepaspectratio]{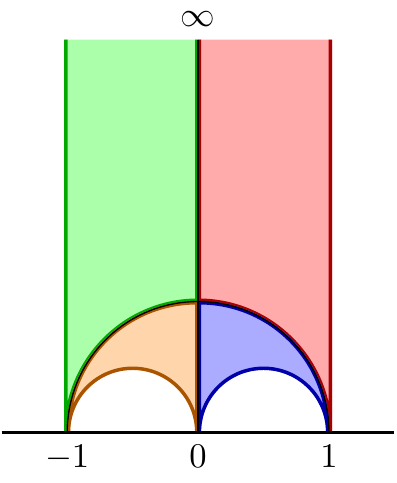}
\caption{Symmetric partition of $\D$}
\label{figu:partition}
\end{figure}

In particular, if we consider the transition zones to be contained in these triangular regions, it is direct that they are pairwise disjoint. And since these regions are isometric, we can consider the transition zones to be isometric and interchanged by the isometry $g$. That is, we impose
\begin{align*}
g: \quad
\alpha_{u^{-2}} & \leftrightarrow \alpha_{{}^{t\!}u^2}, \\
\alpha_{{}^{t\!}u^{-2}} & \leftrightarrow \alpha_{u^2}=u^2\alpha_{u^{-2}}.
\end{align*}

We have now to study the points $b_s(z,R)$, $s\in S$, $z\in\alpha_s$, in order to give conditions to $L,R$ to be admissible (see \S\ref{sect:transition zones2}). Moreover, by the imposed symmetries, it is enough to find conditions for $b_0(h,R)\coloneqq b_{u^2}(1+hi,R)$, to be contained in $T_0\coloneqq T(1,i,\infty)$, for $h>0$.

Now, by definition, $b_0(h,R)$ is the only point in $\D$ such that \[d_\H(b_0(h,R),\beta_{u^2}) = d_\H(b_0(h,R),1+hi)= R,\] for $R>0$ small enough.
By the leftmost equality, such points correspond to points in $\D$ which lie on the geodesic passing through $1+hi$ perpendicularly to $\beta_{u^2}$ (see Figure~\ref{figu:symmetric transition zones}, cf. Figure~\ref{figu:b2}). That is, $b_0(h,R) = 1 + he^{i\theta(R)}i$, for some $\theta(R)>0$.
Moreover,
\[
d_\H(1 + he^{i\theta}i,1+hi)
 = d_\H(e^{i\theta}i,i)
 = \acosh(\sec(\theta)).
\]
Thus, $\cos(\theta(R)) = \sech(R)$ and therefore, $\sin(\theta(R)) = \tanh(R)$. It follows that
\[b_{u^2}(1+hi,R) = b_0(h,R) = 1 + he^{i\theta(R)}i = 1 - h\tanh(R) + ih\sech(R).\]

\begin{figure}[ht]
\centering\includegraphics[width=.9\textwidth,height=.29\textheight,keepaspectratio]{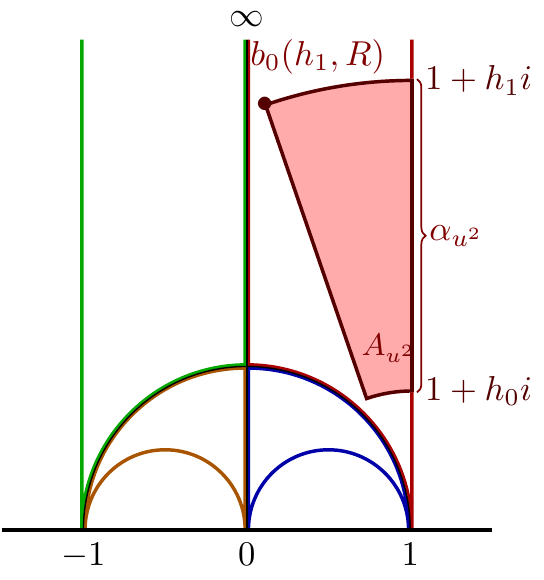}
\caption{Transition zone $A_{u^2}$ and $b_0(h_1,R)\in T_0$.}
\label{figu:symmetric transition zones}
\end{figure}

Then, the condition $b_0(h,R)\in T_0$ is equivalent to $2\tanh^2(R) \leq h \leq \coth(R)$. 
Thus, $L=\log(h_1/h_0)$ and $R$ are admissible if $2\tanh^2(R) \leq h_0 < h_1 \leq \coth(R)$.
That is, if $e^L \leq \coth(R)/2\tanh^2(R)$ or, equivalently, if \[2e^L \tanh^3(R) \leq 1.\pushQED{\qed}\qedhere\]

\setcounter{figure}{0}
\section{Estimates for the combinatorial spectrum}
\label{sect:estimates combinatorial spectrum}

In this appendix we estimate from below the bottom of the combinatorial spectrum $\mu_0(G,S)$, for the group $G<\pslr$ associated to the system of generators $S = \{u,{}^{t\!}u^3\}$, where $u=\begin{psmallmatrix} 1 & 1 \\ 0 & 1 \end{psmallmatrix}$ and ${}^{t\!}u$ is its transpose. By combinatorial spectrum, we refer to the spectrum of the combinatorial Laplace operator on the Cayley graph.

We estimate $\mu_0(G,S)$ from below following ideas of Nagnibeda~\cite{Na} and prove the following.

\begin{theo} \label{theo:combinatorial spectrum}
Let $G$ be the subgroup of $\pslr$ generated by $S = \{u, {}^{t\!}u^3\}$. Then, the bottom of the combinatorial spectrum of $G$ associated to $S$ satisfies \[\mu_0(G,S) > 0.4647.\]
\end{theo}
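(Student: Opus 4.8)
The plan is to translate the statement into an upper bound for the spectral radius of a random walk on $G$ and then to exploit the explicit virtually free structure of $G$.

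First I would rewrite the combinatorial Laplacian in terms of the Markov operator. The generating set relevant to the combinatorial Laplacian is the symmetric set $S=\{u^{\pm1},{}^{t\!}u^{\pm3}\}$, of cardinality $4$. Letting $P$ be the averaging operator $(Ph)_g=\tfrac14\sum_{s\in S}h_{gs}$ on $\ell^2(G)$, one has $\Delta_S=4(\mathrm{Id}-P)$, so that
\[\mu_0(G,S)=4\bigl(1-\sup\operatorname{spec}P\bigr)\geq 4\bigl(1-\|P\|\bigr)=4\bigl(1-\rho\bigr),\]
where $\rho=\|P\|$ is the $\ell^2$-spectral radius of the simple random walk on $G$ with respect to $S$. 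Hence it suffices to prove $\rho<0.8838$, since then $\mu_0(G,S)>4(1-0.8838)=0.4648>0.4647$.

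Next I would pin down the structure of $G$. A direct computation shows that $c\coloneqq u\,{}^{t\!}u^{-3}=\begin{psmallmatrix}-2&1\\-3&1\end{psmallmatrix}$ has $\tr(c)=-1$, hence is elliptic of order $3$ in $\pslr$, and that $\{u,c\}$ generates $G$. A ping-pong argument (consistent with the Euler characteristic count, which gives index $4$ in $\pslz$) then identifies $G$ with the free product $\langle u\rangle*\langle c\rangle\cong\Z/1\Z\text{-free}$, i.e. $\Z*\Z/3\Z$. In particular $G$ is virtually free and nonamenable, so $\rho<1$ and $\mu_0(G,S)>0$; the whole content of the theorem is the quantitative bound.

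The core step, following Nagnibeda, is to estimate $\rho$ via the tree underlying the free product. Since $G$ acts on the Bass--Serre tree of $\Z*\Z/3\Z$, its Cayley graph with respect to $S$ is quasi-isometric to a tree, and the Green function $\mathcal{G}(z)=\sum_{n\geq0}p_n z^n$ (with $p_n=\langle P^n\delta_e,\delta_e\rangle$ the return probabilities) is algebraic. Concretely I would introduce first-passage generating functions between adjacent building blocks of the tree and derive, from the local combinatorics, a finite system of polynomial equations they satisfy; the radius of convergence of $\mathcal{G}$ equals $1/\rho$, so locating the relevant singularity of the solution yields $\rho$. Solving this algebraic system and bounding its root rigorously (e.g. by explicit estimates or interval arithmetic) then gives $\rho<0.8838$.

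The hard part is that the generating set $S$ is not adapted to the free-product decomposition: the generator ${}^{t\!}u^3=c^{-1}u$ straddles both factors, so the walk is \emph{not} a free product of walks on $\langle u\rangle$ and on $\langle c\rangle$, and the standard closed-form free-product formulas do not apply directly. The recursion must therefore carry the extra bookkeeping created by these mixed steps, and one must ensure the bound on the singularity of $\mathcal{G}$ is genuinely rigorous rather than merely numerical. As an alternative one may phrase the estimate through Grigorchuk's cogrowth formula, bounding the cogrowth of the normal closure of the relator $(u\,{}^{t\!}u^{-3})^{3}$ in the free group on $\{u,{}^{t\!}u^3\}$; this replaces the Green-function recursion by a cogrowth count but runs into the same essential difficulty of controlling words that become trivial only through the mixed generator.
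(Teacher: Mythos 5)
Your reduction is sound as far as it goes: with the symmetric set $S\cup\bar S=\{u^{\pm1},{}^{t\!}u^{\pm3}\}$ one indeed has $\Delta_S=4(\mathrm{Id}-P)$, hence $\mu_0(G,S)=4(1-\sup\operatorname{spec}P)\geq 4(1-\rho)$, and your structural analysis is correct and consistent with the paper: $c=u\,{}^{t\!}u^{-3}$ has trace $-1$, so $c^3=\mathrm{id}$ in $\pslr$, and $G=\langle u\rangle*\langle c\rangle\cong\Z*(\Z/3\Z)$, which is exactly the presentation $\langle u,v\mid (u\bar v)^3\rangle$ the paper works with. But the proof stops precisely where the theorem begins. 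The entire content of the statement is the quantitative bound, and your argument for $\rho<0.8838$ is only a program: you never derive the finite system of polynomial equations for the first-passage generating functions, never solve it, and never bound the relevant singularity of the Green function. You yourself flag the obstruction --- the generator ${}^{t\!}u^3=c^{-1}u$ straddles both free factors, so none of the closed-form free-product formulas apply --- and then leave it unresolved; the cogrowth alternative has the same status. Note also that the margin is genuinely thin: the true value of $\sup\operatorname{spec}P$ lies in $(\sqrt{3}/2,\,0.883825)\approx(0.8660,\,0.8838)$ (the lower bound being the $4$-regular tree value, per the paper's Remark), so ``$\rho<0.8838$'' cannot be waved through; it needs an actual computation with controlled error. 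As written, the claimed inequality is asserted, not proved, and that is a genuine gap.

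For comparison, the paper never attempts to compute $\rho$ (which is indeed hard for this non-adapted generating set). Instead it bounds it from above by an elementary flow argument: using the presentation $\langle u,v\mid(u\bar v)^3\rangle$ it classifies the possible sets of length-two suffixes of geodesic words (via the primitive relators $(u\bar v)^{\pm3},(\bar vu)^{\pm3}$), producing a \emph{compatible type function} with four cone types and an explicit successor table (four type-$1$ successors for type $0$; two type-$1$ and one type-$2$ for type $1$; two type-$1$ and one type-$3$ for type $2$; two type-$1$ for type $3$). The Gabber--Galil lemma then converts any positive valuation $c=(c_1,c_2,c_3)$ on types into the bound $\mu_0(G,S)\geq 4-\max_k f_k(c)$, and a three-variable optimization gives $\max_k f_k(\bar c)\approx 3.5353$, hence $\mu_0>0.4647$. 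In effect, both approaches bound the same spectral quantity, but the paper's route replaces the (undone) exact Green-function analysis by a rigorous, finite, and checkable combinatorial estimate; if you wanted to salvage your plan, the cone-type automaton the paper constructs is exactly the input that the Nagnibeda--Woess machinery for walks with finitely many cone types would need, so your Green-function system could in principle be built on top of it --- but that work is what a proof must contain.
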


\begin{rema}
It can be proved that the bottom of the combinatorial spectrum associated to a symmetric finite system of $k>1$ generators, is bounded from above by $k-2\sqrt{k-1}$ (which corresponds to the bottom of the combinatorial spectrum of a regular tree of degree $k$).
In our case, this means that $\mu_0(G,S) < 4-2\sqrt{3}$ or, numerically, $\mu_0(G,S) < 0.5359$. In particular, this shows that the error in our estimate is less than $16\%$.
\end{rema}

In the following, we recall some aspects of combinatorial group theory we need and, in particular, we recall the definition of the bottom of the combinatorial spectrum $\mu_0(G,S)$. The following discussion is completely general.

\subsection{Combinatorial group theory}

Let $G$ be any group, and let $S$ be a subset of $G$. A \emph{word} in $S$ is any expression of the form
\[ w=s_{1}^{\sigma _{1}}s_{2}^{\sigma _{2}}\cdots s_{n}^{\sigma _{n}}\]
where $s_1,\dots,s_n\in S$ and $\sigma_i \in \{+1,-1\}$, $i=1,\dots,n$. The number $l(w)=n$ is the \emph{length} of the word.

Each word in $S$ represents an element of $G$, namely the product of the expression.
The identity element can be represented by the empty word, which is the unique word of length zero.

\subsubsection*{Notation} 
We use an overline to denote inverses, thus $\bar s$ stands for $s^{-1}$.

In these terms, a subset $S$ of a group $G$ is a system of generators if and only if every element of $G$ can be represented by a word in $S$. Henceforth, let $S$ be a fixed system of generators of $G$ and a word is assumed to be a word in $S$. A \emph{relator} is a non-empty word that represent the identity element of $G$.

Any word in which a generator appears next to its own inverse ($s\bar s$ or $\bar ss$) can be simplified by omitting the redundant pair. We say that a word is \emph{reduced} if it contains no such redundant pairs.

Let $v,w$ be two words. We say that $v$ is a \emph{subword} of $w$ if $w=v'vv''$, for some words $v',v''$. If $v'$ is the empty word we say that $v$ is a \emph{prefix} of $w$. If $v''$ is the empty word we say that $v$ is a \emph{suffix} of $w$.

We say that a word is \emph{reduced in $G$} if it has no non-empty relators as subword.
In particular, if a word is reduced in $G$, any of its subwords is also reduced in $G$.

For an element $g\in G$, we consider the \emph{word norm} $|g|$ to be the least length of a word which is equals to $g$ when considered as a product in $G$, and every such word is called a \emph{path}, that is, if its length coincides with its word norm when considered as a product in $G$. In particular, a path is always reduced in $G$. Moreover, a subword of a path is also a path. We say that two words are \emph{equivalent} if they represent the same element in $G$.

For a relator, we call a subword that is a relator, a \emph{subrelator}. We say that a relator is \emph{primitive} if every proper subword is reduced in $G$, that is, if it does not contain proper subrelators. In particular, a word is reduced in $G$ if and only if it contains no primitive relators as subword.
Note that, if $P$ is the set of all primitive relators, then $\langle S \mid P \rangle$ is a presentation of $G$.

The following elementary results (see Figure~\ref{figu:decomposition}) will be useful in \S\ref{sect:compatible type function}.

\begin{lemm}
Let $v,w$ be two different equivalent paths. Then, there are paths $v_0,v_1$, $w_0,w_1$ and $x$ such that $v = v_0v_1x$ and $w = w_0w_1x$, and $v_1\bar w_1$ is a primitive relator (of even length).
\end{lemm}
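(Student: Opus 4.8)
The plan is to peel off the longest common suffix of $v$ and $w$ and then to extract the primitive relator by a minimality argument over equivalent suffixes. First I would let $x$ be the maximal common suffix of $v$ and $w$, writing $v = v' x$ and $w = w' x$. Since $v$ and $w$ are distinct paths representing the same element, they have equal word norm, so $\lvert v'\rvert = \lvert w'\rvert$; in particular $v'$ and $w'$ are both nonempty (otherwise $v = x = w$), and by maximality of $x$ their last letters differ. As $x$ represents an invertible element, cancelling it on the right yields that $v'$ and $w'$ are equivalent, and both are paths, being subwords of the paths $v$ and $w$.

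Next I would consider the collection of pairs $(a,b)$ in which $a$ is a nonempty suffix of $v'$, $b$ is a nonempty suffix of $w'$, and $a$ and $b$ are equivalent. This collection is nonempty, as $(v',w')$ belongs to it, so I may choose a pair $(v_1,w_1)$ minimizing $\lvert v_1\rvert + \lvert w_1\rvert$. Writing $v' = v_0 v_1$ and $w' = w_0 w_1$ for the complementary prefixes then gives the required decompositions $v = v_0 v_1 x$ and $w = w_0 w_1 x$, with all five factors paths (each is a subword of a path). Since $v_1$ and $w_1$ are equivalent paths, $\lvert v_1\rvert = \lvert w_1\rvert$, so $r \coloneqq v_1\bar w_1$ is a relator of even length $2\lvert v_1\rvert$.

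The crux, where I expect the real work to lie, is proving that $r$ is \emph{primitive}, and the minimality of $(v_1,w_1)$ is exactly what should force this. Suppose toward a contradiction that some proper subword $U$ of $r = s_1\cdots s_p\,\bar t_q\cdots\bar t_1$ (where $v_1 = s_1\cdots s_p$ and $w_1 = t_1\cdots t_q$) is a relator. Because $v_1$ is a suffix of the path $v'$, it is reduced in $G$, and likewise $\bar w_1$, the inverse word of the path $w_1$, is reduced in $G$; hence $U$ can lie neither entirely inside $v_1$ nor entirely inside $\bar w_1$. Therefore $U$ must straddle the central junction and take the form $U = s_i\cdots s_p\,\bar t_q\cdots\bar t_j$, with $s_i\cdots s_p$ a nonempty suffix of $v'$ and $t_j\cdots t_q$ a nonempty suffix of $w'$. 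The relator condition gives that $s_i\cdots s_p$ and $t_j\cdots t_q$ are equivalent, so they form an admissible pair of strictly smaller total length, contradicting the choice of $(v_1,w_1)$. Thus $r$ is primitive. The main obstacle is precisely this case analysis showing any subrelator straddles the junction; the initial removal of the common suffix $x$ is what keeps the construction non-degenerate, guaranteeing that the terminal letters $s_p$ and $\bar t_q$ do not cancel.
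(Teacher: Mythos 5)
Your proof is correct and takes essentially the same approach as the paper: strip off the maximal common suffix $x$, choose a minimal pair of equivalent non-empty suffixes of $v'$ and $w'$, and show that any proper subrelator of $v_1\bar w_1$ must straddle the junction (since $v_1$ and $\bar w_1$ are reduced in $G$) and hence yields a strictly smaller equivalent pair, contradicting minimality. The only cosmetic difference is that you minimize $\lvert v_1\rvert + \lvert w_1\rvert$ over all admissible pairs, while the paper directly takes the smallest equivalent non-empty suffixes, which amounts to the same thing since equivalent paths have equal length.
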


\begin{figure}[ht]
\centering\includegraphics[height=.275\textheight]{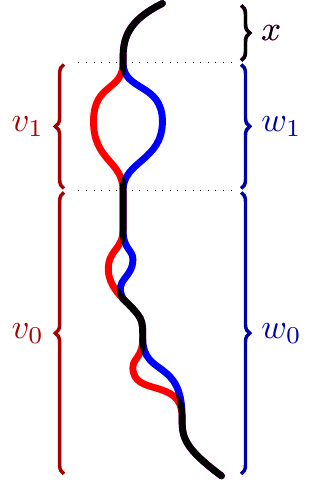}
\caption{Decomposition of two equivalent paths}
\label{figu:decomposition}
\end{figure}

\begin{proof}

Let $x$ be the largest common suffix of $v$ and $w$ (possibly $x$ is empty). Write $v=v'x$ and $w = w'x$.
Let $w_1$ and $v_1$ be the smallest non-empty suffixes of $w'$ and $v'$ respectively such that $v_1$ and $w_1$ are equivalent. Such $v_1$ and $w_1$ exist since $v$ and $w$ are different words. Moreover, they have the same length since they are equivalent, that is, they are paths that evaluate to the same element in $G$. Write $v'=v_0v_1$ and $w'=w_0w_1$ (possibly $v_0$ and $w_0$ are empty). In particular $v_0$ and $w_0$ are equivalent, since the same holds for $v',w'$ and $v_1,w_1$.

It remains to prove that $v_1\bar w_1$ is primitive.
Suppose $z$ is a subrelator of $v_1\bar w_1$.
Since $v_1$ and $w_1$ are paths, they are in particular reduced in $G$ and also their subwords. Then $z = v_2\bar w_2$ for some non-empty suffixes $v_2$ and $w_2$ of $v_1$ and $w_1$ respectively. In particular, $v_2$ and $w_2$ are non-empty suffixes of $w'$ and $v'$ respectively and $v_2,w_2$ are equivalent. But, by definition, $v_1$ and $w_1$ are the smallest such suffixes and therefore $v_2=v_1$ and $w_2=w_1$. Thus, $v_1\bar w_1$ has no proper subrelators and therefore, $v_1\bar w_1$ is primitive.
\end{proof}

As a direct consequence of the previous lemma, we have the following.

\begin{coro} \label{coro:primitive relator subword}
Let $v=v'yx$ and $w=w'zx$ be two equivalent paths such that $y\bar z$ is reduced in $G$. Then, $y\bar z$ is a subword of some primitive relator (of even length).
\end{coro}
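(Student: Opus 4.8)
The plan is to derive this directly from the preceding lemma by applying it to the two paths $v=v'yx$ and $w=w'zx$, and then tracking where the distinguishing letters $y$ and $z$ land inside the primitive relator the lemma produces. First I would record the elementary consequence of the hypothesis: since $y\bar z$ is reduced in $G$, we must have $y\neq z$, for otherwise $y\bar z=y\bar y$ contains the redundant pair $y\bar y$, a non-empty relator, contradicting reducedness. Because $y$ and $z$ are exactly the letters immediately preceding the common tail $x$ in $v$ and in $w$, the inequality $y\neq z$ yields two facts simultaneously: the words $v$ and $w$ are distinct (so the lemma is applicable), and $x$ is precisely the \emph{largest} common suffix of $v$ and $w$.

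Applying the preceding lemma to $v$ and $w$ then gives decompositions $v=v_0v_1x$ and $w=w_0w_1x$ with the same tail $x$ (here I use that the lemma builds its tail as the largest common suffix, which we have identified with our $x$) and with $v_1\bar w_1$ a primitive relator of even length. The structural heart of the argument is that $v_1$ is a non-empty suffix of $v'y$, hence ends in the letter $y$, while $w_1$ is a non-empty suffix of $w'z$, hence ends in $z$. Writing $v_1=ay$ and $w_1=bz$ for (possibly empty) words $a$ and $b$, I obtain $v_1\bar w_1=ay\bar z\bar b$, so that $y\bar z$ appears verbatim as the subword straddling the junction of $v_1$ and $\bar w_1$. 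This exhibits $y\bar z$ as a subword of the primitive relator $v_1\bar w_1$, which is exactly the claim; its even length is inherited from the lemma.

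The step demanding the most care is the structural remark that $v_1$ ends in $y$ and $w_1$ ends in $z$. This rests on $y$ and $z$ being single generators, so that ``$v_1$ is a non-empty suffix of $v'y$'' genuinely forces the final letter of $v_1$ to equal $y$; and it rests on the tail in the lemma's output coinciding with our $x$, which in turn relies on the earlier identification of $x$ as the largest common suffix via $y\neq z$. Once these bookkeeping points are secured, no computation remains: the conclusion follows immediately from the factorizations $v_1=ay$, $\bar w_1=\bar z\bar b$, with the evenness of the relator length passed on verbatim from the lemma.
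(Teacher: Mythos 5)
Your proof follows the paper's own route --- invoke the preceding lemma, identify its tail with $x$ (via $y\neq z$, hence also $v\neq w$), and exhibit $y\bar z$ inside the primitive relator $v_1\bar w_1$ --- but it does so under an extra hypothesis that you state explicitly and that the corollary does not make: that $y$ and $z$ are \emph{single generators}. In the paper, $y$ and $z$ are words, and the corollary is applied later with words of length two: in the next corollary, ruling out $s,r^2\in\mbf S(g)$ uses the present statement with $z=r^2$ (giving $y\bar z=s\bar r\bar r$), and ruling out $sr,s\in\mbf S(g)$ uses $y=sr$ (giving $y\bar z=sr\bar s$). Under your single-letter reading these two cases would only yield $y\bar z=s\bar r$ and $r\bar s$, which for suitable $s,r$ \emph{are} subwords of primitive relators, so no contradiction could be drawn. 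Your proposal therefore proves strictly less than what is stated and than what the paper actually needs.

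The uncovered cases are exactly where your ``structural heart'' breaks: if $|y|\geq 2$, a non-empty suffix of $v'y$ need not contain $y$, so the lemma's minimal pair may have $v_1$ a proper suffix of $y$. A complete proof must add two things: first, if $v_1$ is a proper suffix of $y$ \emph{and} $w_1$ is a suffix of $z$ (or symmetrically), then the primitive relator $v_1\bar w_1$ would be a subword of $y\bar z$, contradicting the hypothesis that $y\bar z$ is reduced in $G$; second, the remaining mixed case ($v_1$ a proper suffix of $y$ while $z$ is a proper suffix of $w_1$, or symmetrically) is impossible when $|y|,|z|\leq 2$, since $|v_1|=|w_1|$ would force $z$ to be empty. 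This is also the point the paper compresses into ``it is clear that $y$ is a subword of $v_1$ and $z$, of $w_1$'': that claim --- and with it the unrestricted corollary --- holds in the range the applications need ($|y|,|z|\leq 2$) but fails for longer words. Indeed, in $G=\langle u,v\mid(u\bar v)^3\rangle$ the words $P=uu\bar vu$ and $Q=uv\bar uv$ are equivalent paths ($\bar QP=(\bar vu)^3$, and both are geodesic: any representing word has exponent sum $2$, so a shorter one would be among $uu,uv,vu,vv$, none of which is this element); taking $v'$ and $x$ empty, $y=uu\bar vu$, $w'=uv$, $z=\bar uv$, the word $y\bar z=uu\bar vu\bar vu$ is reduced in $G$ (every balanced subword is a power of a conjugate of $u\bar v$, which has order $3$), yet it has length $6$ and is none of the four primitive relators, hence a subword of none; here $v_1=u\bar vu$ is a proper suffix of $y$ and $w_1=v\bar uv$ properly contains $z$. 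So restricting to single letters is not a harmless simplification: it silently avoids the one case that requires an argument (and a length restriction) and thereby misses the instances of the corollary that the paper goes on to use.
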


\begin{proof}
Consider the decomposition given by the previous lemma. It is clear that $y$ is a subword of $v_1$ and $z$, of $w_1$. Then $y\bar z$ is a subword of the primitive relator $v_1\bar w_1$.
\end{proof}

\subsection{Combinatorial spectrum}
Let $G$ be a finitely generated group and $S\subset G$ be a finite system of generators of $G$.
Let $\ell^2(G)$ be the space of square-summable sequences on $G$ with the inner product \[\langle h,h' \rangle \coloneqq \sum_{g\in G} h_g {h\mathrlap{'}}_g,\] for $h,h'\in\ell^2(G)$, and define $\Delta_S:\ell^2(G)\to\ell^2(G)$, the combinatorial Laplace operator on $G$ associated to $S$, by \[(\Delta_S h)_g \coloneqq \sum_{s\in S\cup\bar S} (h_g - h_{gs}),\] for $h\in\ell^2(G)$.
Then, we define $\mu_0(G,S)$, \emph{the bottom of the combinatorial spectrum of $G$ associated to $S$} to be the bottom of the spectrum of $\Delta_S$, that is, \[\mu_0(G,S)\coloneqq \inf \left\{\frac{\langle \Delta_S h, h \rangle}{\langle h, h \rangle},\; h\in \ell^2(G)\right\}.\]

\begin{rema} The subjacent object in this discussion is the Laplace operator on the Cayley graph of $G$ associated to $S$. However we do not explain this here.
\end{rema}

\subsubsection{Nagnibeda's ideas}

In order to give estimates from below to the combinatorial spectrum we follow ideas of Nagnibeda~\cite{Na}, which are based in the following result, whose proof is elementary (see, for example,~\cite[\S7.1]{Co}).

\begin{prop}[Gabber--Galil's lemma] \label{prop:GG}
Let $G$ be a finitely generated group and $S$ a finite symmetric system of generators of $G$. Suppose there exists a function $L:G\times S\to \R_+$ such that, for every $g\in G$ and $s\in S$,
\[L(g,s) = \frac{1}{L(gs,s^{-1})} \qquad \text{ and } \qquad \sum_{s\in S} L(g,s) \leq k,\] for some $k>0$.
Then, \[\mu_0(G,S) \geq \# S - k.\]
\end{prop}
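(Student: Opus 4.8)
The plan is to establish the Dirichlet-form lower bound $\langle \Delta_S h, h\rangle \geq (\#S - k)\,\langle h,h\rangle$ for every $h\in\ell^2(G)$, from which $\mu_0(G,S)\geq \#S-k$ is immediate, since $\mu_0(G,S)$ is by definition the infimum of the Rayleigh quotients $\langle\Delta_S h,h\rangle/\langle h,h\rangle$. Because $S$ is finite and symmetric, $\Delta_S$ is a bounded self-adjoint operator and all sums below are absolutely convergent: for each fixed $s$, Cauchy--Schwarz gives $|\sum_g h_g h_{gs}|\leq\langle h,h\rangle$, and the later rearrangements involve only non-negative summands, so no convergence issue arises.

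First I would rewrite the quadratic form by expanding the definition and using $(h_g-h_{gs})h_g=h_g^2-h_g h_{gs}$, which yields
\[\langle \Delta_S h, h\rangle = \#S\cdot\langle h,h\rangle - \sum_{g\in G}\sum_{s\in S} h_g h_{gs}.\]
This reduces the whole statement to the single inequality $\sum_{g,s} h_g h_{gs}\leq k\,\langle h,h\rangle$.

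The main step is a weighted arithmetic--geometric mean estimate that feeds on the two hypotheses on $L$. For each pair $(g,s)$, applying $2ab\leq \lambda a^2+\lambda^{-1}b^2$ with $\lambda=L(g,s)$ gives
\[2\,h_g h_{gs}\leq L(g,s)\,h_g^2+\frac{1}{L(g,s)}\,h_{gs}^2=L(g,s)\,h_g^2+L(gs,s^{-1})\,h_{gs}^2,\]
where the final equality is exactly the reciprocal relation $L(g,s)=1/L(gs,s^{-1})$. Summing over all $(g,s)\in G\times S$ I treat the two resulting terms separately. The first is $\sum_g\big(\sum_{s\in S}L(g,s)\big)h_g^2\leq k\,\langle h,h\rangle$, by the bound $\sum_s L(g,s)\leq k$. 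For the second I reindex by the involution $(g,s)\mapsto(gs,s^{-1})$ of $G\times S$ --- here symmetry of $S$ is essential, so that $s^{-1}$ again ranges over $S$ --- which transforms it into $\sum_g\big(\sum_{s\in S}L(g,s)\big)h_g^2\leq k\,\langle h,h\rangle$ as well. Adding the two bounds gives $2\sum_{g,s}h_g h_{gs}\leq 2k\,\langle h,h\rangle$, hence the required inequality.

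The only delicate points, rather than genuine obstacles, are bookkeeping: verifying that $(g,s)\mapsto(gs,s^{-1})$ is a bijection of $G\times S$ (it is its own inverse, hence an involution), and justifying the interchange of summation order, which is legitimate because after the AM--GM step every summand $L(g,s)h_g^2$ is non-negative, so Tonelli's theorem applies. No deeper machinery is needed; the symmetry of $S$ together with the two defining properties of $L$ does all the work.
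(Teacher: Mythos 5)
Your proof is correct and complete: the expansion of the Rayleigh quotient, the weighted AM--GM step with $\lambda=L(g,s)$, and the involution $(g,s)\mapsto(gs,s^{-1})$ (using the symmetry of $S$ and the reciprocity hypothesis) constitute exactly the standard Gabber--Galil weight argument, which is what the paper invokes when it declares the proof ``elementary'' and defers to \cite[\S7.1]{Co} rather than writing it out. Your attention to convergence (Cauchy--Schwarz for the cross term, Tonelli after the pointwise inequality makes all summands non-negative) is a welcome bit of rigor beyond the cited sketch; nothing further is needed.
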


Let $S$ be a symmetric finite system of generators of $G$. For $g\in G$, denote by $|g|$ the word norm with respect to $S$ and define $S^\pm(g)\coloneqq \{s\in S :\; |gs| = |g| \pm 1\}$. For $g\in G$ and $s\in S$, we say that $gs$ is a \emph{successor} of $g$ if $s\in S^+(g)$ and that $gs$ is a \emph{predecessor} of $g$ if $s\in S^-(g)$. Henceforth we assume $S^{+}(g)\cup S^{-}(g)=S$, for every $g\in G$. Note that this is equivalent to say that every relator has even length.

A function $t:G \to \N$ is called a \emph{type function on $G$} and its value $t(g)$ at $g\in G$ is called the \emph{type of $g$}. We say that a type function $t$ is \emph{compatible with $S$}, or simply that $t$ is a \emph{compatible type function}, if the following two conditions are equivalent:
\begin{enumerate}
\item $t(g) = t(g')$;
\item $\#\{s\in S^+(g) :\; t(gs)=k\} = \#\{s'\in S^+(g') :\; t(g's')=k\}$, for every $k\in \N$.
\end{enumerate}
Equivalently, $t$ is a compatible type function if the (multiset of) types of successors of an element $g\in G$ (is/)are completely defined by its type $t(g)$.

For any type function $t:G \to \N$ and positive valuation $c: \N\to \R_+$, we can consider a function $L_c:G\times S\to \R_+$ defined by
\[L_c(g,s) = \begin{cases} c_k, & \text{if } s \in S^+(g),\;k = t(gs), \\ 1/c_k, & \text{if } s\in S^-(g),\;k = t(g). \end{cases}\]
It is clear then, by the definition, that any $L_c:G\times S\to \R_+$ defined as above satisfies $L_c(g,s) = 1/L_c(gs,s^{-1})$, since $s\in S^+(g)$ if and only if $s^{-1}\in S^-(gs)$, and $S=S^+(g)\cup S^-(g)$, for every $g\in G$.

Moreover, for a compatible type function $t$, we define for $k=t(g)\in \N$, $g\in G$,
\[
f_k(c) \coloneqq \sum_{s\in S} L_c(g,s) = \sum_{s\in S^+(g)} c_{t(gs)} + \frac{\# S^-(g)}{c_k}.
\]
Note that this is well defined since $t$ is compatible with $S$ and therefore the sum depends only on $k$, the type of $g$.

As a direct consequence of Gabber--Galil's lemma (Proposition~\ref{prop:GG}), we get the following.

\begin{coro} \label{coro:GG} Let $t:G\to \{0,\dots, K\}$ be a compatible type function. Then, \[\mu_0(G,S) \geq \# S - \max_{k=0,\dots,K} f_k(c),\] for every $c:\{0,\dots, K\} \to\R_+$, where $f_k$ is defined as above.
\end{coro}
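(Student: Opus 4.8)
The plan is to apply Gabber--Galil's lemma (Proposition~\ref{prop:GG}) directly to the weight function $L_c\colon G\times S\to\R_+$ attached to the compatible type function $t$ and an arbitrary positive valuation $c\colon\{0,\dots,K\}\to\R_+$, as constructed in the preceding discussion. Since the statement is advertised as a direct consequence, the whole argument amounts to checking that $L_c$ satisfies the two hypotheses of the lemma and then reading off the resulting bound.

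First I would recall that the involutive relation $L_c(g,s)=1/L_c(gs,s^{-1})$ has already been established: it holds because $s\in S^+(g)$ if and only if $s^{-1}\in S^-(gs)$, together with the standing assumption $S=S^+(g)\cup S^-(g)$. This is precisely the first hypothesis of Proposition~\ref{prop:GG}. Second, I would bound the row sum $\sum_{s\in S}L_c(g,s)$ uniformly in $g$. Here is where compatibility of $t$ enters: by definition of a compatible type function, the multiset of types of the successors of $g$ is determined by $t(g)$ alone, so the row sum depends only on $t(g)$ and equals $f_{t(g)}(c)$. Consequently, for every $g\in G$,
\[
\sum_{s\in S}L_c(g,s)=f_{t(g)}(c)\leq\max_{k=0,\dots,K}f_k(c),
\]
so the second hypothesis holds with the uniform constant $\kappa\coloneqq\max_{k=0,\dots,K}f_k(c)$, which is finite because there are only the finitely many types $0,\dots,K$.

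With both hypotheses verified, Proposition~\ref{prop:GG} yields $\mu_0(G,S)\geq\#S-\kappa=\#S-\max_{k=0,\dots,K}f_k(c)$, and since $c$ was arbitrary this is exactly the asserted inequality. I do not expect a genuine obstacle here: the only point requiring care is the use of compatibility to guarantee that the row sum is a function of $t(g)$ alone, so that the constant in Gabber--Galil's lemma can be taken to be the finite maximum of the $f_k(c)$ over the type set $\{0,\dots,K\}$ rather than a supremum over the infinite group $G$.
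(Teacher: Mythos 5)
Your proof is correct and follows exactly the route the paper intends: the paper states Corollary~\ref{coro:GG} as a direct consequence of Proposition~\ref{prop:GG}, with the involutive property of $L_c$ and the identity $\sum_{s\in S}L_c(g,s)=f_{t(g)}(c)$ (valid precisely because $t$ is compatible) already established in the preceding discussion. Your write-up simply makes the implicit verification explicit, including the key point that compatibility turns the supremum over the infinite group $G$ into a finite maximum over the types $0,\dots,K$.
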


Then, every compatible (finite) type function gives lower bounds for the combinatorial spectrum.

\subsection{Compatible type functions in our particular case}
\label{sect:compatible type function}

Until now, the discussion is completely general. We now specialize to the case of $G<\pslz$ with generators $u=\begin{psmallmatrix} 1 & 1 \\ 0 & 1 \end{psmallmatrix}$ and $v={}^{t\!}u^3$.
The aim in the following is to give a compatible finite type function in this case, in order to give estimates for the bottom of the combinatorial spectrum with the aid of Corollary~\ref{coro:GG}.
For this, we define a \emph{suffix type function} and prove that it is compatible with $S=\{u,v\}$.

It is not difficult to see that $\langle u,v \mid (u \bar v)^3\rangle$ is a presentation of $G$ and the set of primitive relators is given by
\[\{ (u \bar v)^3, (\bar u v)^3, (v \bar u)^3, (\bar v u)^3\}.\]
In particular, every relator has even length and we can apply previous discussion.

Let $\mbf S(g)$ be the set of all suffixes of paths for $g\in G$. Then, by the description of the primitive relators, as a direct consequence of Corollary~\ref{coro:primitive relator subword}, we have the following.

\begin{coro}
Let $s\in S$ and $r\in S\setminus\{s,\bar s\}$. The following cases cannot happen:
\begin{itemize}
\item $s,\bar s \in \mbf S(g)$;
\item $sr,\bar sr \in \mbf S(g)$;
\item $s, r^2 \in \mbf S(g)$;
\item $sr,s \in \mbf S(g)$;
\item $u, \bar v \in \mbf S(g)$;
\item $\bar u, v \in \mbf S(g)$;
\end{itemize}
\end{coro}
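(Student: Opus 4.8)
The plan is to derive each forbidden configuration from Corollary~\ref{coro:primitive relator subword} together with the explicit list of the four primitive relators $(u\bar v)^3,(\bar u v)^3,(v\bar u)^3,(\bar v u)^3$. The common mechanism is the following: if two of the listed words, say $\alpha$ and $\beta$, both lie in $\mbf{S}(g)$, then each is a suffix of a path for the same element $g$, so I can write paths $p=p'\alpha$ and $q=q'\beta$ both representing $g$. Comparing $p$ and $q$ from their right (the $g$-)end, I isolate their largest common suffix $x$ together with the words $y,z$ sitting immediately before $x$ in $p$ and $q$; since $\alpha\neq\beta$ already disagree near the right end, $x$ and the relevant $y,z$ can be read off directly from $\alpha,\beta$, and in particular $p\neq q$. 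Whenever $y\bar z$ is reduced in $G$, Corollary~\ref{coro:primitive relator subword} forces $y\bar z$ to be a subword of one of the four primitive relators, and I will always choose $y,z$ so that this conclusion is absurd.

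First I would record two structural features of the primitive relators that any subword must inherit. (a) Each relator is strictly alternating, so no subword contains two equal consecutive letters (in particular neither $s^2$ nor $\bar r\bar r$), and consecutive letters always involve different generators. (b) Within any single relator each generator occurs with one fixed sign, and the two generators occur with opposite signs; hence no subword can contain both a letter $s$ and its inverse $\bar s$, and no subword can contain $uv$ or $\bar u\bar v$ (a co-occurrence of $u$ and $v$ with the same sign).

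With these facts the six cases are immediate. For $\{s,\bar s\}$ and for $\{sr,\bar s r\}$ the right ends give $y=s$, $z=\bar s$ (with $x$ empty, respectively $x=r$), hence $y\bar z=s^2$, which is excluded by (a). For $\{s,r^2\}$ I keep the whole longer suffix, taking $y=s$ and $z=r^2$ with $x$ empty, so that $y\bar z=s\bar r\bar r$ contains the repeated pair $\bar r\bar r$, again excluded by (a). For $\{sr,s\}$ I take $y=sr$, $z=s$, giving $y\bar z=sr\bar s$, which contains both $s$ and $\bar s$ and is therefore excluded by (b). Finally $\{u,\bar v\}$ yields $y\bar z=uv$ and $\{\bar u,v\}$ yields $y\bar z=\bar u\bar v$, both excluded by (b). In every case the hypothesis $r\in S\setminus\{s,\bar s\}$ (and, in the last two, that $u,v$ are distinct generators) guarantees that $y\bar z$ is freely reduced, and since each $y\bar z$ has length at most three it contains no primitive relator; hence $y\bar z$ is reduced in $G$ and the corollary genuinely applies.

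The only point requiring care, and the place where the argument could go wrong, is the choice of $y$ and $z$. The corollary applies only when $y\bar z$ is reduced in $G$, which forces the last letters of $y$ and $z$ to differ; this is why I compare the suffixes from the right, and why in the two mixed-length cases $\{s,r^2\}$ and $\{sr,s\}$ I must retain the entire longer suffix rather than just its final letter, so that the forbidden pattern (the $\bar r\bar r$, or the clash between $s$ and $\bar s$) actually appears inside $y\bar z$ rather than being truncated away. Once $y,z$ are fixed, verifying that $y\bar z$ is reduced is routine, and the contradiction with (a) and (b) is read off directly from the four primitive relators.
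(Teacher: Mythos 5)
Your proof is correct and takes essentially the same approach as the paper: the paper's one-line proof (that neither $s^2$, $sr\bar s$, $uv$ nor $vu$ is a subword of a primitive relator) is precisely your application of Corollary~\ref{coro:primitive relator subword} with the choices of $y,z$ you describe, using the alternation and opposite-sign structure of the four relators $(u\bar v)^3,(\bar u v)^3,(v\bar u)^3,(\bar v u)^3$. Your additional verifications --- free reducedness, length at most $3<6$ so that $y\bar z$ is reduced in $G$, and retaining the full longer suffix in the cases $\{s,r^2\}$ and $\{sr,s\}$ --- merely make explicit what the paper leaves implicit.
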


\begin{proof}
Neither $s^2$, $sr\bar s$, $uv$ nor $vu$ are subwords of a primitive relator.
\end{proof}

Let $\mbf S_n(g)$ be the set of all suffixes of length $n\in \N$ of paths for $g\in G$ and define, by recurrence, $\mbf S_1^*(g) = \mbf S_1(g)$ and
\[\mbf S_{n+1}^*(g) = \begin{cases} \mbf S_{n+1}(g) & \text{if } S_{n+1}(g) \neq \emptyset, \\ \mbf S_n^*(g) & \text{if } S_{n+1}(g) = \emptyset. \end{cases}\]
Note that any injective function $j:\mbf S_n^*(G) \to \N$ defines a (finite) type function $t=j\circ \mbf S_n^*:G\to\N$, which we call \emph{suffix type function of level~$n$}.

\begin{lemm} Let $t:G\to \N$ be a suffix type function of level~$2$. Then, it is compatible with $S$.
\end{lemm}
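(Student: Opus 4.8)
The plan is to unwind the definition of compatibility for the injective composite $t = j \circ \mbf S_2^*$. Since $j$ is injective, condition (1) reads $\mbf S_2^*(g) = \mbf S_2^*(g')$, and the types of the successors of $g$ are recorded by the multiset $\{\mbf S_2^*(gs) : s \in S^+(g)\}$. Thus it suffices to establish two locality statements: (P) the partition $S = S^+(g) \sqcup S^-(g)$ depends only on $\mbf S_1^*(g)$, hence only on $\mbf S_2^*(g)$; and (Q) for each $s \in S^+(g)$ the set $\mbf S_2^*(gs)$ depends only on the pair $(\mbf S_2^*(g), s)$. Granting (P) and (Q), the implication (1)$\Rightarrow$(2) is immediate, and this is exactly the input needed for Corollary~\ref{coro:GG}; the reverse implication, restoring the stated equivalence, then follows by reading $\mbf S_1^*(g)$ back off the $s$-ending suffixes of the successors. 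Throughout I use that, by the presentation $\langle u, v \mid (u\bar v)^3 \rangle$, all four primitive relators have length $6$, so every relator has even length and the successor/predecessor framework applies.

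For (P) I would argue directly from word norms: $s \in S^-(g)$ means $|gs| = |g| - 1$, which happens exactly when some geodesic for $g$ ends in $\bar s$, i.e.\ $\bar s \in \mbf S_1^*(g)$. Indeed, if $g = g'\bar s$ geodesically then $gs = g'$ is shorter, and conversely a geodesic of $gs$ with $\bar s$ appended is a geodesic of $g$ ending in $\bar s$. Since all relators are even, $S = S^+(g) \sqcup S^-(g)$, so $S^+(g) = \{ s \in S : \bar s \notin \mbf S_1^*(g)\}$; and $\mbf S_1^*(g)$ is the set of terminal letters of the words in $\mbf S_2^*(g)$, the finitely many $g$ with $|g| \le 1$ being checked by hand.

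The heart of the argument is (Q), and here I would exploit the length-$3$ flips coming from Corollary~\ref{coro:primitive relator subword}: the only way two geodesics of a common element can differ at their ends is through the replacements $u\bar v u \leftrightarrow v\bar u v$ and $\bar u v \bar u \leftrightarrow \bar v u \bar v$ (these being the four flippable length-$3$ words), each of which alters the length-$2$ suffix. Fix $s \in S^+(g)$. I first claim that every geodesic $W'$ of $g$ extends, i.e.\ $W's$ is again geodesic: otherwise appending $s$ would complete one of the length-$4$ shortening patterns, so $W'$ would end in a length-$3$ word that flips to one terminating in $\bar s$, forcing $\bar s \in \mbf S_1^*(g)$ and contradicting $s \in S^+(g)$. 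Consequently the geodesics of $gs$ terminating in $s$ are exactly the words $W's$, and their length-$3$ suffixes are exactly $\{ \tau s : \tau \in \mbf S_2^*(g)\}$, since the length-$2$ suffix of $W'$ ranges over $\mbf S_2^*(g)$ (a deeper flip inside $W'$ only replaces $W'$ by another geodesic of $g$, again with suffix in $\mbf S_2^*(g)$). Applying the end-flips above to this finite set and recording the terminal two letters of every resulting length-$3$ word yields $\mbf S_2^*(gs)$; this output is manifestly a function of $\mbf S_2^*(g)$ and $s$ alone, which is (Q).

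The step I expect to be delicate is precisely this last one: showing that, once $s \in S^+(g)$, no appended letter can trigger a shortening, and that only flips touching the final position can create new length-$2$ suffixes. This is where the evenness of the relators and the passage through end-flips conspire to remove any apparent dependence on the length-$3$ suffix of $g$; the forbidden-coexistence corollary immediately preceding the lemma is what prevents incompatible suffix pairs from appearing in the flip-closure, keeping the transition well defined on level-$2$ data. Verifying the handful of base cases with $|g| \le 2$ and tabulating the resulting finite transition structure then completes the proof.
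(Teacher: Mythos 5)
Your core argument --- the locality statements (P) and (Q) --- is correct and rests on the same ingredients as the paper's proof (the description of $S^{\pm}(g)$ via terminal letters of suffixes, and the end-flips coming from the length-$6$ primitive relators via Corollary~\ref{coro:primitive relator subword}), but it is organized differently: the paper first enumerates every possible value of $\mbf S_2^*(g)$ using the forbidden-coexistence corollary and then tabulates the transitions case by case, whereas you prove once and for all that the transition is well defined on level-$2$ data, with no enumeration (the enumeration is still needed later, for Theorem~\ref{theo:cone types} and the numerics, but not for compatibility itself). Two local remarks on (Q). First, your justification that $W's$ stays geodesic is an unnecessary detour: if $s\in S^+(g)$ then $|W's|=|g|+1=|gs|$, so $W's$ is geodesic by length count alone. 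Second, the inclusion you should state explicitly is that every flip of a realized suffix $\tau s$ is itself realized; this holds because replacing the last three letters of a geodesic by the equivalent flipped word yields a word of the same length representing the same element, hence another geodesic. With these points and the $|g|\le 2$ base cases you defer, your proof of the forward direction (the type of $g$ determines the multiset of types of its successors) is complete, and that is exactly what the paper's proof establishes and what Corollary~\ref{coro:GG} consumes.

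However, your claimed reverse implication $(2)\Rightarrow(1)$ is false, and cannot be repaired: the level-$2$ suffix type function does not satisfy the literal equivalence in the definition of compatibility. Take $g=u$ and $g'=u^2$. Since $j$ is injective, $t(g)=j(\{u\})\neq j(\{u^2\})=t(g')$, yet both elements have $S^+=\{u,v,\bar v\}$ and the same successor data $\{u^2\},\{uv\},\{u\bar v\}$; equivalently, in the paper's transition diagram the rows of $\{s\}$ and $\{s^2\}$ coincide. The multiset of successor types simply does not remember the type of $g$, and no ``reading back'' of $\mbf S_1^*(g)$ from the successors can be extracted from condition (2), which records only a multiset. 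This is harmless for the mathematics: the operative notion of compatibility --- the one the paper's own proof verifies, restated there as ``the types of the successors are completely defined by $t(g)$'' --- is the forward implication only, and it is the only property used in Corollary~\ref{coro:GG}. So your proof becomes correct, and arguably cleaner than the paper's, once the reverse-implication sentence is deleted.
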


\begin{proof}
Being compatible with $S$ means that the type $t(g)$ of $g\in G$ completely defines the types of its successors. Then, it is enough to show that $\mbf S_2^*(g)$ defines completely the multiset $\{\mbf S_2^*(gs): s\in S^+(g)\}$.

From the previous corollary, we can deduce that $\mbf S_1^*(g) =\emptyset,\{u\},\{\bar u\},\{v\},\{\bar v\},\{u,v\}$ or $\{\bar u,\bar v\}$, and that
\[\mbf S_2^*(g) \in \{\emptyset\} \cup \{
  \{s\},\{s^2\},\{sr\}
  \}_{s\in S, r\in S\setminus\{s,\bar s\}} \cup \{
  \{\bar a b,\bar b a\},
  \{a^2, ba\}
  \}_{\{a,b\}\in\{\{u,v\},\{\bar u, \bar v\}\}}
  .\]
Moreover, it is clear that $s\in S^+(g)$ if and only if $\bar s\notin \mbf S_1(g)$.

Let $s\in S$, $r\in S\setminus\{s,\bar s\}$ and $\{a,b\}\in\{\{u,v\},\{\bar u, \bar v\}\}$.
\begin{itemize}
\item If $\mbf S_2^*(g) = \emptyset$, $g=id$ and evidently $\mbf S_2^*(ge) = \{e\}$, for $e\in S=S^+(g)$.
\item If $\mbf S_2^*(g) = \{s\}$ or $\{s^2\}$, then $\mbf S_2^*(ge) = \{se\}$ for $e\in S\setminus\{\bar s\}=S^+(g)$.
\item If $\mbf S_2^*(g) = \{ab\}$, then $\mbf S_2^*(ge) = \{be\}$, for $e\in S\setminus\{b\}=S^+(g)$.
\item If $\mbf S_2^*(g) = \{a\bar b\}$, then $S^+(g)=S\setminus\{\bar b\}$, $\mbf S_2^*(ga) = \{\bar b a,\bar a b\}$ and $\mbf S_2^*(gq) = \{bq\}$, for $q\in \{\bar a,b\}=S^+(g)\setminus\{a\}$.
\item If $\mbf S_2^*(g) = \{\bar a b,\bar b a\}$, then $\mbf S_2^*(ge) = \{ae,be\}$, for $e\in \{a,b\}=S^+(g)$.
\item If $\mbf S_2^*(g) = \{a^2, ba\}$, then $\mbf S_2^*(ge) = \{ae\}$, for $e\in S\setminus\{\bar a\}=S^+(g)$.
\end{itemize}
Thus, given only the value of $\mbf S_2^*(g)$ we can tell the corresponding value of $\mbf S_2^*(gs)$ for each $s\in S^+(g)$ and therefore, suffix type functions are compatible with $S$.
\end{proof}

We summarize the proof of the previous lemma by the following diagram which shows each possible $\mbf S_2^*(g)$, $g\in G$ with its respective multiset of $\mbf S_2^*(ge)$, $e\in S^+(g)$:
\begin{align*}
\mbf S_2^*(g) & \to \mbf S_2^*(ge), e\in S^+(g) \\
\\
\emptyset & \to \{u\}, \{\bar u\}, \{v\}, \{\bar v\} \\
\{s\} & \to \{s^2\}, \{sr\}, \{s \bar r\} \\
\{s^2\} & \to \{s^2\}, \{sr\}, \{s \bar r\} \\
\{ab\} & \to \{b^2\}, \{ba\}, \{b \bar a\} \\
\{a \bar b\} & \to \{\bar b^2\}, \{\bar b a, \bar a b\}, \{\bar b \bar a\} \\
\{\bar b a, \bar a b\} & \to \{a^2,ba\}, \{ab,b^2\} \\
\{a^2,ba\} & \to \{a^2\}, \{ab\}, \{a \bar b\},
\end{align*}
where $s\in S$, $r\in S\setminus\{s,\bar s\}$ and $\{a,b\}\in\{\{u,v\},\{\bar u, \bar v\}\}$.

It is not difficult to see in the previous diagram that there are different suffix types which share the types of the successors. Namely $\{a\}$, $\{a^2\}$, $\{ba\}$ and $\{ba,a^2\}$. This allows us to reduce the number of types. Furthermore, it is clear that distinguishing $a$ and $\bar a$ or $a$ and $b$ in the previous description has no major benefit.
This motivates the definition of the following type function. Let $T:G\to\{0,\dots,3\}$ be the type function defined as follows:
\[
T(g) =
\begin{cases}
0 & \text{if } \mbf S_2^*(g) = \emptyset, \\
1 & \text{if } \mbf S_2^*(g) = \{a\},\; \{a^2\},\; \{ba\} \text{ or } \{ba,a^2\}, \\
2 & \text{if } \mbf S_2^*(g) = \{\bar b a\}, \\
3 & \text{if } \mbf S_2^*(g) = \{\bar a b, \bar b a\},
\end{cases}
\]
for $\{a,b\}\in\{\{u,v\},\{\bar u, \bar v\}\}$.

From the previous discussion, we deduce the following.
\begin{theo} \label{theo:cone types} The type function $T:G\to \{0,\dots,3\}$ is compatible with $S$.
Moreover,
\begin{itemize}
\item Type $0$ elements have four type $1$ successors;
\item Type $1$ elements have two type $1$ and one type $2$ successors;
\item Type $2$ elements have two type $1$ and one type $3$ successor; and
\item Type $3$ elements have two type $1$ successor.
\end{itemize}
\end{theo}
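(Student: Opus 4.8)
The plan is to read the statement essentially off the transition diagram displayed above, after translating each value of $\mbf S_2^*(g)$ into the coarser type $T(g)$. The preceding lemma already guarantees that $\mbf S_2^*(g)$ determines the multiset $\{\mbf S_2^*(gs):s\in S^+(g)\}$ of successor suffix sets, so the framework of a \emph{compatible} type function is already in place at the level of $\mbf S_2^*$. Since $T$ is obtained from $\mbf S_2^*$ simply by grouping several suffix sets into one class, the only thing that genuinely has to be checked is that this grouping is consistent with the transitions, i.e.\ that two suffix sets sharing a type always produce the same multiset of successor \emph{types}; the four successor profiles are then just a bookkeeping readout.

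First I would set up the dictionary between suffix sets and types. Recall that $T$ assigns type $0$ to $\emptyset$; type $1$ to $\{a\}$, $\{a^2\}$, $\{ba\}$ and $\{a^2,ba\}$; type $2$ to $\{\bar b a\}$ (equivalently $\{a\bar b\}$); and type $3$ to $\{\bar a b,\bar b a\}$, where $\{a,b\}\in\{\{u,v\},\{\bar u,\bar v\}\}$. The delicate point is the classification of the two-letter suffix sets appearing on the right-hand side of the diagram. The key observation is that a product of two letters drawn from a single \emph{matched} pair $\{u,v\}$ or $\{\bar u,\bar v\}$ (such as $uv$, $vu$ or $\bar u\bar v$) is of the form $\{ba\}$, hence type $1$, whereas a \emph{mixed} product such as $u\bar v$ or $\bar u v$ is of the form $\{\bar b a\}$, hence type $2$; the two-element sets $\{\bar u v,\bar v u\}$ and $\{u\bar v,v\bar u\}$ are precisely the type-$3$ sets.

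With this dictionary in hand the verification is a finite case check over the rows of the diagram. For $\emptyset$ (type $0$) the four successors $\{u\},\{\bar u\},\{v\},\{\bar v\}$ are all type $1$. For the type-$1$ representatives $\{s\}$, $\{s^2\}$, $\{ab\}$ and $\{a^2,ba\}$, each row yields one square (type $1$), one matched product (type $1$) and exactly one mixed product (type $2$), so the successor-type multiset is $\{1,1,2\}$ in every case; this simultaneously confirms that collapsing these four suffix sets into type $1$ is legitimate. For the type-$2$ row $\{a\bar b\}$ the successors are $\{\bar b^2\}$ (type $1$), $\{\bar b\bar a\}$ (a matched product, type $1$) and the two-element set $\{\bar b a,\bar a b\}$ (type $3$), giving $\{1,1,3\}$. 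For the type-$3$ row $\{\bar b a,\bar a b\}$ the two successors $\{a^2,ba\}$ and $\{ab,b^2\}$ are both type $1$, giving $\{1,1\}$. Reading off the cardinalities $4,3,3,2$ of these rows then yields exactly the successor counts in the statement, and compatibility of $T$ with $S$ follows from the fact that each of these multisets depends only on $T(g)$.

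I expect the only real obstacle to be the two-letter classification in the second step: one must track carefully which products $xy$ lie inside a matched pair and which are mixed, and confirm that every two-letter and every two-element suffix set occurring in the diagram lands unambiguously in one of the four types (in particular that distinguishing $a$ from $\bar a$, or $a$ from $b$, never changes the type). Once this has been checked, both compatibility and the four successor profiles drop out with no further computation.
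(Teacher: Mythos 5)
Your proposal is correct and follows essentially the same route as the paper, which likewise deduces the theorem by collapsing the level-$2$ suffix diagram to the four types and reading off the successor profiles row by row (your dictionary of matched versus mixed two-letter suffixes is exactly the grouping the paper uses). One small point to make explicit: since compatibility is an equivalence, besides checking that suffix sets sharing a type yield the same successor-type multiset, one should also note that the four resulting profiles $\{1,1,1,1\}$, $\{1,1,2\}$, $\{1,1,3\}$ and $\{1,1\}$ are pairwise distinct, which is immediate from your own tabulation.
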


Thus, we have a compatible type function with a full description of the types of the successors for each type. We can then finally apply Nagnibeda's ideas (Corollary~\ref{coro:GG}) to give estimates for the bottom of the combinatorial spectrum.

\subsection{Estimates for the bottom of the combinatorial spectrum} By Theorem~\ref{theo:cone types}, the $f_k$ of Corollary~\ref{coro:GG} are given by:
\begin{itemize}
\item $f_0(c) = 4c_1$;
\item $f_1(c) = 2 c_1 + c_2 + 1/c_1$;
\item $f_2(c) = 2 c_1 + c_3 + 1/c_2$; and
\item $f_3(c) = 2 c_1 + 1/c_3$.
\end{itemize}
It follows that $\mu_0(G,S) \geq \# S - \max_{k} f_k(c)$, for every $c=(c_1,c_2,c_3)\in \R_+^{3}$.
Thus, the problem can be reduced to find the optimal such bound.
This can be solved numerically: we get that $\bar c\in\R_+^3$ with
\[\bar c_1= 0.5680;\; \bar c_2 \approx 0.6387;\; \bar c_3 \approx 0.8336,\]
is a (local) minimun for $\max_{k} f_k(c)$, and $\max_{k} f_k(\bar c) \approx 3.5353$.

Finally, since $\# S=4$, it follows that \[\mu_0(G,S) > 0.4647.\]
This concludes the proof of Theorem~\ref{theo:combinatorial spectrum}\qed



\end{document}